\def\ZZ{{\mathbb Z}}
\def\QQ{{\mathbb Q}}
\def\PP{{\mathbb P}}
\def\LL{{\mathbb L}}
\def\CC{{\mathbb C}}
\def\cX{{\mathcal X}}
\def\cA{{\mathcal A}}
\def\cO{{\mathcal O}}
\def\cT{{\mathcal T}}
\def\cQ{{\mathcal Q}}
\def\cI{{\mathcal I}}
\def\bk{{\bf k}}
\def\bd{{\bf d}}
\def\Spec{{\bf {Spec}}}
\def\RHom{{\mathbf{R}\rm{Hom}}}
\def\D{\mathrm{D}}
\def\Ext{{\mathrm{Ext}}}
\def\Hom{{\mathrm{Hom}}}
\def\End{{\mathrm{End}}}
\def\O{\mathcal{O}}
\def\P{\mathcal{P}}
\def\E{\mathcal{E}}
\def\F{\mathcal{F}}
\def\L{\mathcal{L}}
\def\sH{\underline{H}}
\newtheorem{lemma}{Lemma}[section]
\newtheorem{theorem}[lemma]{Theorem}
\newtheorem{claim}[lemma]{Claim}
\newtheorem{corollary}[lemma]{Corollary}
\newtheorem{prop}[lemma]{Proposition}
\theoremstyle{definition}
\newtheorem{definition}[lemma]{Definition}
\newtheorem{example}[lemma]{Example}
\newtheorem{remark}[lemma]{Remark}
\theoremstyle{remark}
\newtheorem*{proof*}{Proof}
\numberwithin{equation}{section}
\begin{document}
\title{Chern-Simons functions on toric Calabi-Yau threefolds and Donaldson-Thomas theory}
\author{Zheng Hua\footnote{Address: Department of Mathematics,
University of British Columbia, Vancouver, V6T 1Z2, Canada, E-mail:
\tt hua@math.ubc.ca}{}}

\maketitle
\begin{abstract}
In this paper, we give a construction of the global Chern-Simons
functions for toric Calabi-Yau stacks of dimension three using
strong exceptional collections. The moduli spaces of sheaves on such
stacks can be identified with critical loci of these functions. We
give two applications of these functions. First, we prove Joyce's
integrality conjecture of generalized DT invariants on local
surfaces. Second, we prove a dimension reduction formula for virtual
motives, which leads to two recursion formulas for motivic
Donaldson-Thomas invariants.
\end{abstract}

\section{Introduction}
Moduli spaces of sheaves (more general, complexes of sheaves) on
Calabi-Yau threefolds are examples of moduli problems with symmetric
obstruction theories (\cite{Behrend}). It is expected that such a
moduli space is locally critical set of a holomorphic function. Such
functions are called Chern-Simons (CS) functions. Chern-Simons
functions play an important role in Calabi-Yau geometry because
Behrend proved the Milnor number of CS function is the microlocal
version of Donaldson-Thomas invariant (See \cite{Behrend}).

In a seminal work \cite{JS}, Joyce and Song proved the existence of
CS functions for moduli spaces of stable sheaves on compact
Calabi-Yau 3-folds using analytic techniques in gauge theory. In
this paper, we give a different construction of the CS functions on
toric Calabi-Yau 3-folds. Our construction has a few new
ingredients. First, the functions we construct are algebraic.
Second, the moduli spaces of stable sheaves are in fact globally
critical sets of these functions. Third, the construction is
explicit, i.e. there is an algorithm to write down such functions
starting with a toric CY 3-fold together with some extra data (See
end of Section \ref{sec_CS}). Another byproduct is that we are able
to make the following heuristic statement mathematically rigorous:
\begin{claim}
Let $X$ be a toric Fano stack of dimension two and $Y$ be the total
space of canonical bundle over $X$. Let $\gamma$ be a class in the
Grothendieck group of $X$, which is a subgroup of the Grothendieck
group of $Y$. Denote the moduli spaces of sheaves on $X$ (resp. $Y$)
with class $\gamma$ by $\mathfrak{M}_\gamma(X)$ (resp.
$\mathfrak{M}_\gamma(Y)$). With suitable derived schemes (stacks)
structures we have the isomorphism
\[
\mathfrak{M}_\gamma(Y)\simeq T^*[-1]\mathfrak{M}_\gamma(X)
\] where the righthand side is the odd cotangent bundle over
$\mathfrak{M}_\gamma(X)$.
\end{claim}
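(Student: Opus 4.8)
I would combine the spectral (Beauville--Narasimhan--Ramanan) correspondence with relative Serre duality on the surface. Let $\pi\colon Y\to X$ be the bundle projection and $i\colon X\hookrightarrow Y$ the zero section. Since $\pi$ is affine with $\pi_*\O_Y=\mathrm{Sym}_{\O_X}(\omega_X^{-1})=:\cA$, pushforward along $\pi$ is an equivalence between quasi-coherent sheaves on $Y$ and quasi-coherent $\cA$-modules on $X$; under it a coherent sheaf $F$ on $Y$ with support finite over $X$ becomes a pair $(E,\phi)$ with $E=\pi_*F$ coherent on $X$ and $\phi\colon E\to E\otimes\omega_X$ recording the action of the degree-one generators of $\cA$. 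A sheaf on $Y$ of class $\gamma\in K_0(X)\subseteq K_0(Y)$ automatically has finite support over $X$ (and, deforming $\phi$ to $0$, its class is $i_*[E]$), so running the construction in families (replace $\O_X$ by $\O_{X\times S}$, $F$ by an $S$-flat family, etc.) identifies the derived moduli stack $\mathfrak{M}_\gamma(Y)$ with the derived moduli stack $\mathfrak{Higgs}_\gamma(X)$ of Higgs pairs $(E,\phi\colon E\to E\otimes\omega_X)$ with $[E]=\gamma$. A standard computation with the two-term $\cA$-resolution of $\pi_*F$ (available since $Y$ is a line-bundle total space, so $\pi$ has relative dimension one) then produces a functorial exact triangle
\[
\RHom_X(E,E\otimes\omega_X)[-1]\ra\RHom_Y(F,F)\ra\RHom_X(E,E)\xrightarrow{\,[\phi,-]\,}\RHom_X(E,E\otimes\omega_X);
\]
for $\phi=0$, i.e. $F=i_*E$, it reduces to the splitting coming from $Li^*i_*\O_X\simeq\O_X\oplus\omega_X^{-1}[1]$.

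\textbf{Recognising the shifted cotangent.} Forgetting the Higgs field gives $p\colon\mathfrak{Higgs}_\gamma(X)\to\mathfrak{M}_\gamma(X)$ whose $R$-points over a family $E$ on $X\times\Spec R$ are the mapping space of $\O_X$-linear maps $E\to E\otimes\omega_X$; writing $\mathcal{E}$ for the universal object on $\mathfrak{M}_\gamma(X)\times X$ and $q$ for the projection to $\mathfrak{M}_\gamma(X)$, this realises $\mathfrak{Higgs}_\gamma(X)$ as the linear derived stack $\Spec_{\mathfrak{M}_\gamma(X)}\mathrm{Sym}(\mathcal{P}^\vee)$ with $\mathcal{P}:=Rq_*R\mathcal{H}om(\mathcal{E},\mathcal{E}\otimes\omega_X)$. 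Since $\mathbb{T}_{\mathfrak{M}_\gamma(X)}=Rq_*R\mathcal{H}om(\mathcal{E},\mathcal{E})[1]$ and $X$ is a smooth proper two-dimensional toric stack, relative Serre duality for $q$ --- whose relative dualizing complex is $\omega_X[2]$ --- gives $\mathbb{L}_{\mathfrak{M}_\gamma(X)}[-1]\cong(\mathbb{T}_{\mathfrak{M}_\gamma(X)})^\vee[-1]\cong Rq_*R\mathcal{H}om(\mathcal{E},\mathcal{E}\otimes\omega_X)=\mathcal{P}$. Therefore
\[
\mathfrak{Higgs}_\gamma(X)\;\cong\;\Spec_{\mathfrak{M}_\gamma(X)}\mathrm{Sym}\big((\mathbb{L}_{\mathfrak{M}_\gamma(X)}[-1])^\vee\big)\;=\;\Spec_{\mathfrak{M}_\gamma(X)}\mathrm{Sym}\big(\mathbb{T}_{\mathfrak{M}_\gamma(X)}[1]\big)\;=\;T^*[-1]\mathfrak{M}_\gamma(X),
\]
which with the first step gives the claim. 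Under this identification the relative tangent sequence of $T^*[-1]\mathfrak{M}_\gamma(X)$ over $\mathfrak{M}_\gamma(X)$ is the $[1]$-shift of the triangle above --- in particular its connecting map is the commutator with $\phi$ (under the Serre-duality identifications) and is generally nonzero over the stacky base, so $\RHom_Y(F,F)$ really does compute the deformation theory of $\mathfrak{M}_\gamma(Y)$. If $\mathfrak{M}_\gamma(X)$ has no universal family one uses the canonical twisted family or descends along the $\mathbb{G}_m$-gerbe, and restricting to the open stable loci gives the statement for coarse moduli spaces; one can further check that the tautological $(-1)$-shifted symplectic structure on $T^*[-1]\mathfrak{M}_\gamma(X)$ matches the Calabi--Yau one on $\mathfrak{M}_\gamma(Y)$ induced by $\omega_Y\simeq\O_Y$ (valid since $X$ is Fano).

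\textbf{Main obstacle.} The real work is the first step in the \emph{derived} setting: the classical spectral correspondence and the comparison of tangent spaces at closed points are standard, but promoting this to an equivalence of derived moduli stacks --- matching cotangent complexes and symmetric obstruction theories functorially over an arbitrary, possibly derived, base, and tracking automorphisms and gerbes on both sides --- requires care, and it is here that the affineness of $\pi$ and a derived enhancement of $D(Y)\simeq D(\cA\text{-mod})$ carry the argument. The Serre-duality step is routine once the (possibly twisted) universal family is perfect, and it is exactly there that $\dim X=2$ forces the shift to be $-1$; equivalently, this reflects the fact that $\mathfrak{M}_\gamma(Y)$ has a $(-1)$-shifted symplectic structure because $Y$ is a Calabi--Yau threefold.
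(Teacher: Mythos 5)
Your argument is correct, but it takes a genuinely different route from the paper's. The paper never touches the spectral correspondence: it fixes a full strong exceptional collection of line bundles on $X$ (Theorem \ref{King's}), uses derived Morita equivalence to replace sheaves on $X$ and on $Y$ by modules over the quiver algebras $A$ and $B$, and then proves the claim at the level of the controlling $L_\infty$ algebras --- Theorem \ref{Yoneda cyclic} shows the Yoneda algebra $L_\omega$ of $Y$ is the cyclic completion $L\oplus L^*[-3]$ of the Yoneda algebra $L$ of $X$, and Remark \ref{geometry_cyclic} observes that cyclic completion is precisely the algebraic operation of passing to the odd cotangent bundle of the associated formal dg-manifold; Theorem \ref{moduli of sheaves} then glues this into the global moduli statement. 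The computational core is nonetheless the same on both sides: your two-term $\mathrm{Sym}(\omega_X^{-1})$-resolution of $\pi_*F$ is exactly the paper's tautological sequence $0\to\pi^*(E\otimes\omega_X^{-1})\to\pi^*E\to\iota_*E\to 0$ and its double complex, and your relative Serre duality step is the paper's adjunction-plus-Serre-duality identification $\Ext^i_Y(\iota_*E,\iota_*E)\cong\Ext^i_X(E,E)\oplus\Ext^{3-i}_X(E,E)^*$. What your route buys is generality and intrinsicness: it needs no exceptional collection, so it works for any smooth proper surface, and it produces the $(-1)$-shifted cotangent structure directly on the derived moduli stack. What the paper's route buys is explicitness: the finite-dimensional quiver model makes $L_{\omega,\bd}$ a concrete cyclic $L_\infty$ algebra with bounded products, which is what yields the \emph{global polynomial} Chern--Simons functions that the rest of the paper depends on. The one place where your write-up is thinner than the paper's is the one you flag yourself: the promotion of the BNR correspondence to an equivalence of derived moduli stacks is asserted rather than carried out, whereas the paper avoids this by working throughout with the finite-dimensional $L_\infty$ models; also, "finite support over $X$" should be phrased as properly supported (equivalently, set-theoretically supported on the zero section), which is the standing assumption ($\D_\omega$) in the paper.
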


The construction of CS function consists of three steps:
\begin{enumerate}
\item Find a good $t$-structure in the derived category $\D^b(Y)$.
The heart of this $t$-structure is the abelian category of
representations of a quiver with relations. Such an abelian category
is good in the sense that it has finite projective dimension.
\item On a moduli space of representations with fixed dimension vector,
we find a \emph{maximally degenerate} point, which corresponds to
the semi-simple representations. The tangent complex of the moduli
space at this point is given by the well studied $L_\infty$
($A_\infty$) Yoneda algebra in representation theory. We compute the
$L_\infty$( $A_\infty$) products and prove they are bounded. The
Calabi-Yau condition defines a cyclic pairing on this $L_\infty$
algebra, which together with the $L_\infty$ products determine the
CS function.
\item Embed the moduli spaces of sheaves into the moduli spaces of
representations as open substacks.
\end{enumerate}

Step one is based on the existence of full strong exceptional
collections of line bundles on toric Fano stacks of dimension two
(Theorem \ref{King's}). This is proved in a work of the author and
Boriosv \cite{BorHua}. Passing from strong exceptional collection to
associated quiver is a consequence of derived Morita equivalence. We
will study this in Section \ref{sec_morita}.

Step two is based on the cyclic completion Theorem \ref{Yoneda
cyclic} and boundness of $L_\infty$ products (Theorem \ref{boundness
of mu_k}). Theorem \ref{Yoneda cyclic} is first proved by Aspinwall
and Fidkowski in \cite{Aspinwall} and later reproved by Segal in
\cite{Segal}. The terminology \emph{cyclic completion} is due to
Segal. The proofs of these two theorems are in Section
\ref{sec_cylic}.

In Section \ref{sec_CS}, we construct the CS functions and show that
the moduli spaces of sheaves are open substacks of the critical sets
modulo gauge groups. Several examples of CS functions are discussed
in Section \ref{sec_examples}.

The language of $L_\infty$ algebras and derived schemes (stacks)
(developed in \cite{KS}) is extensively used in the paper. Moduli
spaces mentioned above are zero loci of an odd vector field on a
differential graded symplectic manifolds and the CS functions we
construct are essentially Hamiltonian functions associated to it. In
Section \ref{sec_Linfty}, we give a short introduction to $L_\infty$
algebra and dg schemes.

In the last three sections, we give two applications of the CS
function. In Theorem \ref{smoothness} of Section
\ref{sec_Integrality}, we prove the $L_\infty$ products vanish at
semi-stable points of moduli space of sheaves on local surfaces,
which leads to a proof of a special case of the integrality
conjecture of Joyce and Song (\cite{JS}). In Theorem \ref{dimension
reduction}, we prove a dimension reduction formula of virtual
motives for CS functions, which generalizes some results in
\cite{BBS}. By manipulating this dimension reduction formula, we
compute the generating series of two types of motivic
Donaldson-Thomas invariants on toric CY stacks. This is done in
Section \ref{sec_generating series}.

\paragraph{Acknowledgment} I would like to thank Jim Bryan, Kai
Behrend and Kentaro Nagao for interests in my work, comments,
conversations and helpful correspondence. Some results in the last
two sections are also obtained by Andrew Morrison in an independent
work \cite{Morrison}. Thanks to Andrew Morrison for kindly sharing
his preprint. Similar results have also been obtained by Nagao
\cite{Nagao} and Mozgovoy \cite{Moz}. I would like to thank the
referee for pointing out this. Section \ref{sec_Integrality} is
finished during my visit to Imperial college London. It is motivated
by the discussion with Martijn Kool. The author wants to thank
Martijn Kool and Richard Thomas for the very helpful comments. I
also want to thank Dominic Joyce for pointing out a mistake in the
earlier draft.

\paragraph{Notations}
Three dimensional smooth toric Calabi-Yau stacks are in one to one
correspondence with the set of $3$-dimensional cones over convex
lattice polygons $\Delta$ contained in an affine hyperplane,
together with a triangulation of $\Delta$. When the polygon $\Delta$
has at least one interior lattice point, we can consider the
barycentric triangulation. This gives a fan $\Sigma$ on the affine
hyperplane such that its supporting polygon is $\Delta$. The fan
$\Sigma$ determines a $2$-dimensional toric Fano stack $X_\Sigma$
($X$ for short). The cone over $\Sigma$ determines a $3$-dimensional
toric CY stack $Y_\Sigma$ ($Y$ for short), which is the total space
of canonical bundle over $X_\Sigma$. We call such a toric CY 3-stack
a \emph{local surface}. The CY 3-stacks associated to other
triangulations of $\Delta$ are relating to $Y_\Sigma$ by sequences
of flops.

\begin{enumerate}
\item[$\bullet$] $\pi: Y\to X$ is the projection and $\iota:
X\to
Y$ is the inclusion of zero section
\item[$\bullet$] $\D^b(X)$: bounded derived category of coherent
sheaves on $X$
\item[$\bullet$] $\D^b(Y)$: bounded derived category of coherent
sheaves on $Y$
\item[$\bullet$] $\D_\omega$: the full subcategory of $\D^b(Y)$ of
objects with cohomology sheaves supported at $X$
\end{enumerate}

\section{$L_\infty$ algebra and Differential graded
schemes}\label{sec_Linfty}

This is a short introduction to $L_\infty$ algebras and differential
graded schemes. A standard reference for this topic is \cite{KS}.
The readers who are familiar with $\infty$-algebras can skip this
section.

\subsection{$L_\infty$ algebras}

Let $\bk$ be a field.
\begin{definition}
A $L_\infty$ algebra is a graded $\bk$-vector space $L$ with a
sequence $\mu_1,\ldots, \mu_k, \ldots$ of graded antisymmetric
operations of degree 2, or equivalently, homogeneous multi-linear
maps
\[
\mu_k: \wedge^k L \to L[2-k]
\]
such that for each $n > 0$, the $n$-Jacobi rule holds:
\begin{equation*}
    \sum_{k=1}^n (-1)^k \sum_{\substack{ i_1<\ldots<i_k ;
    j_1<\ldots<j_{n-k} \\
    \{i_1,\dotsc,i_k\}\cup\{j_1,\dotsc,j_{n-k}\}=\{1,\dotsc,n\} }}
    (-1)^\epsilon \, \mu_n(\mu_k(x_{i_1},\dotsc,x_{i_k}),x_{j_1},\dotsc,x_{j_{n-k}}) = 0 .
  \end{equation*}
Here, the sign $(-1)^\epsilon$ equals the product of the sign
$(-1)^\pi$
  associated to the permutation
  \begin{equation*}
    \pi = \bigl( \begin{smallmatrix} 1 & \dotso & k & k+1 & \dotso & n
      \\ i_1 & \dotso & i_k & j_1 & \dotso & j_{n-k}
    \end{smallmatrix} \bigr)
  \end{equation*}
  with the sign associated by the Koszul sign convention to the action
  of $\pi$ on the elements $(x_1,\dotsc,x_n)$ of $L$.
\end{definition}

\begin{definition}\label{Maurer-Cartan}
Let $(L,\mu_k)$ be a $L_\infty$ algebra. An element $x\in L^1$ is
called a Maurer-Cartan element if $x$ satisfies the formal equation
(called Maurer-Cartan equation)
\[
\sum_{k=1}^\infty \frac{1}{k!}\mu_k(x,\ldots,x)=0.
\]
If the above formal sum is convergent then there is a map $Q: L^1\to
L^2$, defined by $x\mapsto \sum_{k=1}^\infty
\frac{1}{k!}\mu_k(x,\ldots,x)$. We call it the curvature map. The
set of elements in $L^1$ satisfying the Maurer-Cartan equation is
denoted by $MC(L)$.
\end{definition}
\begin{definition}\label{twisted diff}
Let $L$ be a $L_\infty$ algebra. We write $\delta$ for the first
$L_\infty$ product $\mu_1: L\to L[1]$. It follows from $L_\infty$
relations that $\delta^2=0$. Let $x$ be a Maurer-Cartan element of
$L$. We define the \emph{twisted} differential $\delta^x$ by the
formula $\delta^x(y)=\delta(y)+\sum_{k=2}^\infty
\frac{1}{(k-1)!}\mu_k(x,\ldots,x,y)$. By manipulating the
Maurer-Cartan equation and the $L^\infty$ relations, one can check
that $(\delta^x)^2=0$.
\end{definition}

Given an homogenous element $a\in L$, we denote its grading by
$|a|$.
\begin{definition}
A finite dimensional $L_\infty$ algebra $(L,\mu_k)$ is called
\emph{cyclic} if there exists a homogenous bilinear map
\[
\xymatrix{ \kappa: L\otimes L \ar[r] & \bk[-3]}
\]
satisfies:
\begin{enumerate}
\item[(1)] $\kappa(a,b)=(-1)^{|a||b|}\kappa(b,a)$;
\item[(2)]
$\kappa(\mu_k(a_1,\ldots,a_k),a_{k+1})=(-1)^{|a_1|(|a_2|+\ldots+|a_{k+1}|)}\kappa(\mu_k(a_2,\ldots,a_{k+1}),a_1)$;
\item[(3)]
$\kappa$ is non-degenerated on $H^\bullet(L,\delta)$.
\end{enumerate}
\end{definition}
We call such a $\kappa$ a cyclic pairing on $L$.

\begin{definition}\label{hamiltonian}
Let $(L, \mu_k, \kappa)$ be a cyclic $L_\infty$ algebra. The
Chern-Simons function associated to $L$ is the formal function
\[
f(z) = \sum_{k=1}^\infty \frac{(-1)^{\frac{k(k+1)}{2}}}{(k+1)!}
\kappa( \mu_k(z,\ldots,z), z).
\]
\end{definition}

\subsection{Differential graded schemes}

\begin{definition}
A differential graded (dg for short) scheme $X$ is a pair
$(X^0,\cO^\bullet_X)$, where $X^0$ is an ordinary scheme and
$\cO^\bullet_X$ is a sheaf of $\ZZ^-$ graded commutative dg algebra
on $X^0$ such that:
\begin{enumerate}
\item[(1)] $\cO^0_X=\cO_{X^0}$;
\item[(2)] $\cO^i_X$ are quasi-coherent $\cO_{X^0}$ modules.
\end{enumerate}
\end{definition}
The cohomology sheaves of $\cO^\bullet_X$, denoted by
$\sH^i(\cO^\bullet_X)$ are $\cO_{X^0}$ modules. In particular,
$\sH^0(\cO^\bullet_X)$ is a sheaf of quotient ring of
$\cO_X^0=\cO_{X^0}$. We define the ``0-truncation" of $X$ to be the
ordinary scheme
\[
\pi_0(X)=\Spec\ \sH^0(\O^\bullet_X).
\]
It is a subscheme of $X^0$.
\begin{definition}
A morphism $f$ between dg schemes $X$ and $Y$ is a morphism of
ordinary schemes $f_0: X^0\to Y^0$ together with a morphism of dg
algebras $f_0^* \O^\bullet_Y\to\O^\bullet_X$. A morphism $f$ is
called a quasi-isomorphism if $f$ induces an isomorphism of
$\pi_0(X)$ and $\pi_0(Y)$ and isomorphisms between
$\sH^i(\O^\bullet_X)$ and $\sH^i(\O^\bullet_Y)$.
\end{definition}

\begin{definition}
A dg-scheme $X$ is called smooth (or a dg-manifold) if the following
conditions hold:
\begin{enumerate}
\item[(a)] $X^0$ is a smooth algebraic variety.
\item[(b)] Locally on Zariski topology of $X^0$, we have an isomorphism of
graded algebras $\cO^\bullet_X \simeq Sym _{\cO_{X^0}}  Q^{-1}\oplus
Q^{-2}\oplus \ldots$, where $Q^{-i}$ are vector bundles (of finite
rank) on $X^0$.
\end{enumerate}
\end{definition}

Every $L_\infty$ algebra defines a dg-manifold.
\begin{example}\label{Linfty-dg mfd}
Let $L=L^{-k}\oplus\ldots\oplus L^0\oplus L^1 \oplus\ldots$ be a
finite dimensional $L_\infty$ algebra and $\tau^{>0} L$ be the
truncation of $L$ in positive degrees. Let $X^0$ be the linear
manifold $L^1$ and $\cO^\bullet_X$ be the completed symmetric
algebra $\widehat{Sym\ \tau^{>0}L[1]^*}$, considered as a sheaf over
$L^1$. It has a structure of differential graded algebra (dga). The
$L_\infty$ structure are multi-linear maps $\mu_k: Sym^k L[1]\to
L[2]$. The dual map of $\sum \frac{1}{k!}\mu_k$ defines a derivation
from $\cO^\bullet_X$ to $\cO^{\bullet}_X$ of degree one. We call it
by $q$. The $L_\infty$ relations is equivalent with the condition
that $q^2=0$. It can be interpreted as an odd vector field on the
dg-manifold. The ``0-truncation" $\pi_0(X)$ can be identified with
the Maurer-Cartan locus $MC(L)$. We call the dg-manifold constructed
above the formal dg-manifold associated to $L$.
\end{example}

Given a cyclic $L_\infty$ algebra $(L, \mu_k, \kappa)$, the formal
dg manifold constructed in \ref{Linfty-dg mfd} is a formal
symplectic dg-manifold in the sense of \cite{KS}. The pairing
$\kappa$ can be viewed as an odd sympletic form.

On a formal dg manifold, we can define the analogue of usual Cartan
calculus \cite{KS}. The CS function $f$ is the Hamiltonian function
of the odd vector field $q$ on $X$ with respect to the odd
symplectic form $\kappa$. In particular, $crit(f)$ coincides with
Maurer-Cartan locus of $L$.

\paragraph{Comments on $A_\infty$ and $L_\infty$ algebra}
Given an $A_\infty$ algebra $(R, m_k)$, we can construct, in a
canonical way, a $L_\infty$ algebra $(L, \mu_k)$. This is done by
anti-commuting $m_k$. A lazy way to do that is to first construct a
dg algebra quasi-isomorphic the $R$. Anti-commutize it into a dg Lie
algebra and then take the cohomology. The Maurer-Cartan sets of
$R_\omega$ and $L_\omega$ are the same as sets. In the process of
anti-symmetrization, a cyclic $A_\infty$ algebra goes to a cyclic
$L_\infty$ algebra. We will skip the formal definition of $A_\infty$
algebra (it can be found in \cite{KS}) although it is implicitly
used in the later sections. Using $L_\infty$ algebras has the
advantage that one can make sense of Maurer-Cartan set as a scheme
instead of a non-commutative scheme.

\section{Derived categories of toric
stacks and Morita equivalence}\label{sec_morita}
\begin{definition}\label{exceptional}
Let $\bk$ be a field. Given a $\bk$-linear triangulated category
$\cT$, an object $E\in\cT$ is called \emph{exceptional}, if
$\Ext^i(E,E)=0$ for all $i\neq 0$ and $\Ext^0(E,E)=\bk$.
\begin{enumerate}
\item[$\bullet$]
A sequence of exceptional objects $E_1,\ldots,E_n$ is called an
\emph{exceptional collection} if $\Ext^i(E_j,E_k)=0$ for arbitrary
$i$ when $j>k$.
\item[$\bullet$] An exceptional collection is called
\emph{strong} if $\Ext^i(E_j,E_k)=0$ for any $j$ and $k$ unless
$i=0$.
\item[$\bullet$] We say an exceptional collection is \emph{full}
if it generates $\cT$.
\end{enumerate}
\end{definition}

Let $E,F$ be an exceptional pair in $\cT$. We define the left (resp.
right) mutation $L_E F$ (resp. $R_F E$) using the distinguished
triangles.
\[
\xymatrix{L_E F\ar[r] &\RHom(E,F)\otimes E\ar[r] &F}\]\[
\xymatrix{E\ar[r] &\RHom(E,F)^*\otimes F\ar[r] &R_F E}
\]
Mutations of exceptional collection are exceptional (\cite{Bondal}).
But mutations of strong exceptional collections are not necessary
strong.

Given an exceptional collection $E_0,\ldots,E_n$, we can define
another exceptional collection $F_{-n},F_{-n+1},\ldots,F_0$, called
the \emph{dual} exceptional collection to $E_0,\ldots,E_n$. First
let $F_0$ equal to $E_0$. Second, make $F_{-1}=L_{E_0} E_1$. Then
define $F_{-i}$ inductively by $L_{F_{-i+1}}L_{F_{-i+2}}\ldots
L_{F_0} E_i$.

In our application, $\cT$ will be the bounded derived category
$\D^b(X)$ of a smooth algebraic variety (stack) $X$. The exceptional
objects are always assumed to belong to the heart of certain
$t$-structure.

Given a full strong exceptional colleciton $E_0,\ldots,E_n$, we
denote the direct sum $\bigoplus_{i=0}^n E_i$ by $T$. It is called a
tilting object.
\begin{theorem}\cite{Bondal}\label{morita}
The exact functor $\RHom(T, -)$ induces an equivalence between
triangulated categories $\D^b(X)$ and $\D^b(mod-A)$, where
$A=\End(T)$. This equivalence is usually referred as derived Morita
equivalence.
\end{theorem}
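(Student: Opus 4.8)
The plan is to exhibit $\Phi := \RHom(T,-)$ together with its left adjoint $\Psi := (-)\otimes^{\mathbf{L}}_A T$ as mutually inverse equivalences, using \emph{strongness} to identify $\Phi(T)$ with the free module $A$ and \emph{fullness} to propagate this isomorphism by dévissage.

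First I would check that $\Phi$ is well defined as a functor $\D^b(X)\to\D^b(mod\text{-}A)$: for $\F\in\D^b(X)$ the complex $\RHom(T,\F)$ carries a natural right action of $A=\End(T)$, and since $X$ is smooth (and a proper/quotient stack), $T$ is perfect of finite $\Ext$-amplitude, so $\RHom(T,\F)$ is bounded with finite-dimensional cohomology. The key computation is $\Phi(T)=\RHom(T,T)$: because the collection $E_0,\dots,E_n$ is strong, $\Ext^i(E_j,E_k)=0$ for $i\neq 0$, hence $\RHom(T,T)$ is concentrated in degree zero and equals $A$ as the rank-one free right module $A_A$. Therefore $\Phi$ induces, for every $i$, an isomorphism
\[
\Hom_{\D^b(X)}(T,T[i])=\Ext^i(T,T)\;\xrightarrow{\ \sim\ }\;\Ext^i_A(A_A,A_A)=\Hom_{\D^b(mod\text{-}A)}(A,A[i]),
\]
so $\Phi$ is fully faithful on the smallest thick triangulated subcategory $\langle T\rangle\subseteq\D^b(X)$ containing $T$. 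By the definition of a full exceptional collection, $E_0,\dots,E_n$ — and hence $T$ — generate $\D^b(X)$, so $\langle T\rangle=\D^b(X)$ and $\Phi$ is fully faithful.

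For essential surjectivity I would use the left adjoint $\Psi=(-)\otimes^{\mathbf{L}}_A T$, noting $\Psi(A)=T$ and that $\Psi$ takes perfect $A$-complexes to $\Perf(X)=\D^b(X)$. The unit $\mathrm{id}\to\Phi\Psi$ and counit $\Psi\Phi\to\mathrm{id}$ are isomorphisms on the generators $A$ and $T$ respectively, since $\Phi\Psi(A)=\Phi(T)=A$ and $\Psi\Phi(T)=\Psi(A)=T$; the loci where these natural transformations are isomorphisms are thick subcategories, so — $A$ generating $\D^b(mod\text{-}A)$ and $T$ generating $\D^b(X)$ — both are isomorphisms everywhere, and $\Phi$ is an equivalence.

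The main obstacle is not a deep idea but the bookkeeping around boundedness and generation: one must ensure $\Phi$ genuinely lands in $\D^b(mod\text{-}A)$ (finiteness of the $\Ext$-amplitude of $T$, using smoothness and properness of $X$), and that $A_A$ generates $\D^b(mod\text{-}A)$ as a triangulated category, i.e. $\mathrm{gl.dim}\,A<\infty$ — which is forced by the smoothness of $X$ once full faithfulness is in hand, or may be invoked directly for $\End$ of a tilting bundle on a smooth variety. For the toric Fano stacks $X_\Sigma$ of this paper these properties are standard, so the argument applies verbatim.
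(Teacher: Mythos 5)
The paper offers no proof of this statement---it is quoted directly from \cite{Bondal}---and your argument is precisely the standard tilting argument of that reference: full faithfulness on the thick subcategory generated by $T$ via the degree-zero identification $\RHom(T,T)\simeq A$ (strongness), propagated to all of $\D^b(X)$ by fullness of the collection, with essential surjectivity supplied by the adjoint $-\otimes^{\mathbf{L}}_A T$ and the generation of $\D^b(mod-A)$ by $A$. The one point you rightly flag, $\mathrm{gl.dim}\,A<\infty$, is indeed standard and can also be seen directly here because the simples $S_i$ are the images of the shifted dual collection $F_{-i}[i]$, so their self-extensions are bounded by smoothness of $X$; the proposal is correct.
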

Let $\E$ be an object in $\D^b(X)$, the right $A$-module structure
on $\RHom(T,\E)$ is given by pre-composition. The quasi-inverse
functor of $\RHom(T, -)$ is $-\otimes^\mathbf{L}_A T$.

We can define a quiver with relations from a strong exceptional
collection according the following recipe. First, define the set of
nodes of $\cQ$, denoted by $\cQ_0$ to be the ordered set
$\{0,1,\ldots,n\}$. The $i$-th node corresponds to the generator of
$\Hom(E_i,E_i)$. The set of arrows of $\cQ$, denoted by $\cQ_1$ is
double graded by source and target. The graded piece $\cQ_1^{i,j}$
is a set with cardinality $dim_\CC\Hom(E_i,E_j)$. With a choice of
basis on $\Hom(E_i,E_i)$, the elements in $\cQ_1^{i,j}$ is in one to
one correspondence with such a basis. The exceptional condition
guarantees that there is no arrow that decreases the indices of
nodes. The relations of $\cQ$ are determined by the commutativity of
composition of morphisms. The nodes and arrows generate the free
path algebra $\CC\cQ$, which is spanned as a vector space by all the
possible paths. Multiplication of $\CC\cQ$ is defined by
concatenation of paths. Relations of $\cQ$ form a two-side ideal
$\cI$ of $\CC\cQ$. We call $\CC\cQ/\cI$ the path algebra of
$(\cQ,\cI)$. In some situations, we omit $\cI$ and write just $\cQ$.
It follows from the construction that $\CC\cQ/\cI\simeq A$.

A representation of $(\cQ,\cI)$ is given by the following pieces of
data:
\begin{enumerate}
\item[$\bullet$] Associate a finite dimensional vector spaces $V_i$
to each node $i$;
\item[$\bullet$] Associate a matrix $a^{i,j}$ to each arrow from nodes $i$ to
$j$ such that the matrices associated to any elements in $\cI$ are
zero.
\end{enumerate}
Denote the category of finite dimensional representations of
$(\cQ,\cI)$ by $Rep_\bk (\cQ,\cI)$. There is an equivalence of
abelian categories
\[
Rep_\bk (\cQ,\cI)\cong\CC\cQ/\cI -mod\cong A-mod.
\]

The abelian category $mod- A$ is Noetherian and Artinian. Its simple
objects are exactly those representations that have one dimensional
vector space over node $i$ and $0$ over other nodes. We denote them
by $S_i$. Under the functor $\RHom(T, -)$, the exceptional objects
$E_i$ are mapped to right projective $A$ modules and $F_{-i}$ are
mapped to shifts of simple modules $S_i[-i]$.

The \emph{Yoneda algebra} $R$ of $A$ is defined to be
$\Ext_A^\bullet(\bigoplus_{i=0}^n S_i,\bigoplus_{i=0}^n S_i)$. It
has a canonical $A_\infty$ algebra structure.

Theorem \ref{morita} builds up a link between geometry and
representation theory of quiver suppose one can find a full strong
exceptional collection in $\D^b(X)$. In general, there is no reason
why such collection (even single exceptional object) should exist.
However, the existence result can be proved for toric Fano stack of
dimension two.

Recall that a two dimensional convex lattice polygon $\Delta$ with a
distinguished interior lattice point determines a fan $\Sigma$ by
taking the barycentric triangulation. This determine uniquely a
toric stack, which is denoted by $X_\Sigma$. The Fano condition is
equivalent to the convexity of $\Delta$. We refer to section 3 of
\cite{BorHua} for an introduction to toric stacks.

\begin{theorem}\cite{BorHua}\label{King's}
Let $X_\Sigma$ be a complete toric Fano stack of dimension two. The
bounded derived category of coherent sheaves $\D^b(X_\Sigma)$ has a
full strong exceptional collection consisting of line bundles. The
length of the strong exceptional collection is always equal to the
integral volume of $\Delta$, which is also equal to the Euler
characteristic $\chi(X_\Sigma)$.
\end{theorem}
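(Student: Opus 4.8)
To prove this, I would organize the argument of \cite{BorHua} along the following lines.

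\emph{Reduction to combinatorics.} Present $X_\Sigma$ via its Cox construction as a quotient stack $[(\CC^N\setminus Z)/G]$, where $N=|\Sigma(1)|$, $G=\Hom(\mathrm{Cl}(X_\Sigma),\CC^*)$ is a quasitorus, and $Z$ is the irrelevant locus. Since $X_\Sigma$ is smooth, $\mathrm{Pic}(X_\Sigma)=\mathrm{Cl}(X_\Sigma)$, and line bundles correspond to characters of $G$. For two line bundles $\L,\L'$ one has $\Ext^\bullet(\L,\L')=H^\bullet(X_\Sigma,\L'\otimes\L^{-1})$, and the standard toric recipe expresses this, weight by weight in the character lattice, as a sum of reduced cohomologies of subcomplexes of the fan --- the ``negativity regions'' cut out by the signs of $\langle m,-\rangle$ against the divisor coefficients. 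So the theorem becomes the combinatorial problem of producing divisor classes $D_0,\dots,D_n$ with: (i) $D_j-D_i$ acyclic in every degree whenever $j>i$; (ii) $D_k-D_i$ with vanishing higher cohomology for all $i,k$; (iii) the $\cO(D_i)$ generating $\D^b(X_\Sigma)$. Exceptionality of each $\cO(D_i)$ is automatic, since $X_\Sigma$ is a complete toric (hence rational) stack, so $H^{>0}(\cO_{X_\Sigma})=0$.

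\emph{Constructing the collection.} The line bundles should be indexed by a combinatorial set of size $\mathrm{Vol}_\ZZ(\Delta)$ --- a fundamental domain for $G$ acting on the weight lattice of $\CC^N$, i.e.\ one representative per box element of the stacky maximal cones --- ordered by a generic linear height function. I would build this set by induction on $\Delta$: remove a boundary ray of $\Sigma$ (a weighted contraction) to reach a simpler stacky fan, transport a strong exceptional collection of line bundles back along this contraction, and splice in the class(es) attached to the removed ray, using the mutations $L_E,R_F$ of this section both to reorder and to bridge the two collections. The base cases are the minimal polygons --- $\PP^2$, $\PP^1\times\PP^1$, and the weighted projective planes --- whose strong exceptional collections $\cO,\cO(1),\cO(2),\dots$ are classical.

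\emph{The cohomology vanishing --- the main obstacle.} Conditions (i) and (ii) amount to showing that, for each relevant difference class and each weight, the corresponding negativity region in the planar fan coned from the interior point is empty, contractible, or has reduced homology only in top degree. This is exactly where both hypotheses enter: the Fano (convexity) condition forces $\Delta$ to be small enough that these regions are never ``annular'' in the way that would destroy strongness, and two-dimensionality bounds the combinatorial types that can occur. The higher-dimensional analogue is false (Hille--Perling), so no formal maneuver avoids a genuine case analysis; I expect this uniform vanishing --- over all convex lattice polygons $\Delta$ with an interior point --- to be the hardest part, and would prove it by enumerating the possible negativity regions of such a fan.

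\emph{Fullness and the length count.} For (iii) I would descend the Koszul complex on $\CC^N$ (resolving $\cO_Z$, or the relevant sheaf in the Cox picture) to a Beilinson-type resolution of the diagonal of $X_\Sigma\times X_\Sigma$ with terms built from external products of the chosen line bundles, which forces $\D^b(X_\Sigma)=\langle\cO(D_0),\dots,\cO(D_n)\rangle$; alternatively, once (i)--(ii) and the numerics are in place, fullness follows since the Euler form is non-degenerate and $K_0(X_\Sigma)_\QQ$ has rank $n+1$. Finally, a full strong exceptional collection yields $K_0(X_\Sigma)\cong\ZZ^{n+1}$ (via Theorem~\ref{morita} and the quiver description), so $n+1=\mathrm{rk}\,K_0(X_\Sigma)_\QQ=\chi(X_\Sigma)$; and for the barycentric fan $\mathrm{rk}\,K_0(X_\Sigma)_\QQ=\sum_\sigma[\,N:\ZZ\langle\text{generators of }\sigma\rangle\,]=\sum_\sigma\mathrm{Vol}_\ZZ(\sigma)=\mathrm{Vol}_\ZZ(\Delta)$, since the maximal simplices tile $\Delta$. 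This gives the two claimed equalities.
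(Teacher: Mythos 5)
This theorem is not proved in the paper at all: it is imported wholesale from \cite{BorHua}, and the Lemma \ref{hp} quoted later (in the proof of Proposition \ref{helix cond}) is exactly the combinatorial cohomology formula on which your reduction rests. So your first paragraph does match the actual strategy of \cite{BorHua}: express $\Ext^\bullet(\L,\L')$ weight by weight as reduced homology of the complexes ${\rm Supp}({\bf r})$, and turn exceptionality and strongness into statements about those complexes. But the proposal stops exactly where the theorem begins. The entire mathematical content of the result is the uniform vanishing over all convex lattice polygons $\Delta$ with an interior point, and you explicitly defer it (``I expect this uniform vanishing to be the hardest part, and would prove it by enumerating the possible negativity regions''). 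In \cite{BorHua} this is carried out by writing down an explicit set of ${\rm Vol}_{\ZZ}(\Delta)$ line-bundle classes and verifying the vanishing by a direct, Fano-dependent analysis of the complexes ${\rm Supp}({\bf r})$; as your own reference to Hille--Perling concedes, no formal maneuver substitutes for this, so a proposal that omits it has omitted the proof.

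Two of the specific devices you do propose would not work as stated. First, building the collection by induction on $\Delta$ using the mutations $L_E$, $R_F$ runs into the obstruction recorded in Section \ref{sec_morita}: mutations of exceptional collections are exceptional, but mutations of \emph{strong} exceptional collections need not be strong, so ``splicing in'' classes by mutation gives no control over precisely the property you must establish. Second, the fallback fullness argument --- that an exceptional collection of length ${\rm rk}\,K_0(X_\Sigma)_{\QQ}$ is automatically full because the Euler form is non-degenerate --- is false in general; maximal-length exceptional collections need not generate, so only your primary route (descending a Koszul/Beilinson-type resolution through the Cox construction) is viable. The length count in your final paragraph is correct. A minor sign slip: for $j>i$, exceptionality requires $H^\bullet(X_\Sigma,\cO(D_i-D_j))=0$, not the vanishing for $D_j-D_i$.
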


We will try to extend the derived Morita equivalence to the study of
CY stack $Y$. Consider the exact functor $\RHom(\pi^*T, -)$ from
$\D^b(Y)$ to $\D^b(mod-B)$, where $B=\Hom^\bullet(\pi^*T,\pi^*T)$.
It turns out that this is still an equivalence of triangulated
category if we define the righthand side appropriately. The algebra
$B$ (called the roll-up helix algebra by Bridgeland) in general
carries nontrivial dg-algebra structure. However, in order to apply
the quiver techniques we need to find a strong exceptional
collection such that the differential of $B$ vanishes. This is an
additional condition on strong exceptional collection.

The following proposition is a generalization of Proposition 4.1 of
\cite{Bridgeland}, which is originally proved for $\PP^2$.
\begin{prop}\label{helix cond}
Let $\L_0,\ldots,\L_{n}$ be a full strong exceptional collection of
line bundles on a toric Fano stack of dimension two. The roll-up
(dg)-helix algebra $B$ is in fact an algebra, i.e.
$\Ext^{>0}(\pi^*T,\pi^*T)=0$. Therefore, the exact functor
$\RHom(\pi^*T, -)$ induces an equivalence from $\D^b(Y)$ to
$\D^b(mod-B)$.
\end{prop}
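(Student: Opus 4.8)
The plan is to compute the higher Ext groups $\Ext^{>0}_Y(\pi^*T,\pi^*T)$ directly using the projection formula together with the adjunction between $\pi^*$ and $\pi_*$, exploiting that $\pi\colon Y\to X$ is an affine morphism (the total space of a line bundle). Since $T=\bigoplus_i \L_i$, it suffices to show $\Ext^k_Y(\pi^*\L_i,\pi^*\L_j)=0$ for all $k>0$ and all $i,j$. First I would use that $\pi$ is affine, so $R\pi_*$ is exact and
\[
\Ext^k_Y(\pi^*\L_i,\pi^*\L_j)\;\cong\;\Ext^k_X(\L_i,\L_j\otimes\pi_*\cO_Y).
\]
Next, because $Y$ is the total space of the canonical bundle $\omega_X$, one has $\pi_*\cO_Y=\bigoplus_{m\ge 0}\omega_X^{-m}$ (a locally free, though infinite-rank, sheaf). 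Substituting, the problem reduces to showing
\[
\bigoplus_{m\ge 0}\Ext^k_X(\L_i,\L_j\otimes\omega_X^{-m})=0\qquad\text{for all }k>0.
\]

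The $m=0$ term vanishes for $k>0$ by the strong exceptionality of the collection $\L_0,\dots,\L_n$ on $X$ (Theorem \ref{King's}). For the terms with $m\ge 1$, I would rewrite $\Ext^k_X(\L_i,\L_j\otimes\omega_X^{-m})=H^k(X,\L_i^{-1}\otimes\L_j\otimes\omega_X^{-m})$ and then apply Serre duality on the smooth proper Deligne–Mumford stack $X$:
\[
H^k(X,\L_i^{-1}\otimes\L_j\otimes\omega_X^{-m})\;\cong\;H^{2-k}(X,\L_i\otimes\L_j^{-1}\otimes\omega_X^{m+1})^{\vee}.
\]
For $m\ge 1$ we have $\omega_X^{m+1}=\omega_X\otimes\omega_X^{m}$ with $\omega_X^{m}$ a sufficiently negative power of an ample (anti-canonical is ample since $X$ is Fano) line bundle's dual—wait, more precisely $\omega_X^{-1}$ is ample, so $\omega_X^{m+1}=\omega_X\otimes(\omega_X^{-1})^{-m}$ is $\omega_X$ twisted by a very negative line bundle. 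The key point is that $\L_i\otimes\L_j^{-1}$ is a fixed line bundle, independent of $m$, so for $m$ large the sheaf $\L_i\otimes\L_j^{-1}\otimes\omega_X^{m+1}$ is negative enough that all its cohomology concentrates appropriately; I would use a Kodaira–Nakano-type vanishing (valid on Fano DM stacks, cf. \cite{BorHua}) to conclude $H^{q}(X,\text{negative line bundle})=0$ except possibly in the top degree, and a more careful boundedness argument to handle small $m$. The crux is that the finite strong exceptional collection consists of line bundles whose mutual differences $\L_i\otimes\L_j^{-1}$ lie in a bounded region of the Picard lattice, so only finitely many twists $\omega_X^{-m}$ can possibly contribute to $\Ext^{>0}$, and each such potential contribution is killed either by strong exceptionality ($m=0$) or by positivity/vanishing on the Fano stack ($m\ge 1$).

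The main obstacle I anticipate is the vanishing of $H^1$ and $H^2$ for the intermediate twists: Serre duality moves $H^2$ to $H^0$ of a very negative bundle, which vanishes immediately, and moves $H^1$ to $H^1$ of a negative bundle, which is exactly where one needs a genuine vanishing theorem rather than a dimension count. On a smooth toric DM stack this should follow from the combinatorial description of line bundle cohomology (a Demazure/Batyrev-style vanishing reading cohomology off the polytope $\Delta$), and I would invoke the toric structure of $X=X_\Sigma$ together with the explicit barycentric-triangulation description of the collection from \cite{BorHua}; this is precisely the step where the proof specializes Bridgeland's $\PP^2$ argument to the general toric Fano stack. Once $\Ext^{>0}(\pi^*T,\pi^*T)=0$ is established, $B=\Hom^\bullet_Y(\pi^*T,\pi^*T)$ is concentrated in degree zero, hence an ordinary algebra, and $\pi^*T$ is a tilting object on $Y$ (it generates $\D^b(Y)$ since $T$ generates $\D^b(X)$ and $\pi^*$ together with the $\pi_*$-projection formula propagates the generation); the derived Morita equivalence $\RHom(\pi^*T,-)\colon\D^b(Y)\xrightarrow{\sim}\D^b(\mathrm{mod}\text{-}B)$ then follows from Theorem \ref{morita} applied on $Y$.
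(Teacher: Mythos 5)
Your reduction is exactly the paper's: adjunction along the affine morphism $\pi$ gives $\Ext^d_Y(\pi^*T,\pi^*T)=\bigoplus_{m\ge 0}\Ext^d_X(T,T\otimes\omega_X^{-m})$, so everything comes down to $H^d(X,\L_i^{-1}\otimes\L_j\otimes\omega_X^{-m})=0$ for $d=1,2$ and all $m\ge 1$, the case $m=0$ being strong exceptionality. The problem is that you never actually prove this for the intermediate values of $m$. Serre duality plus a Kodaira--Nakano or Demazure type vanishing does kill the terms with $m\gg 0$, since $\L_i\otimes\L_j^{-1}\otimes\omega_X^{m+1}$ is eventually anti-ample; but for the finitely many small $m$ --- which is where the content of the proposition lives --- you only say that ``a more careful boundedness argument'' or ``the combinatorial description of line bundle cohomology'' should finish the job. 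That is precisely the step requiring an argument, and positivity alone cannot supply it: for small $m$ the bundle $\L_i^{-1}\otimes\L_j\otimes\omega_X^{-m}$ need not be ample or anti-ample, and there is no general reason for its $H^1$ (or for the $H^0$ dual to its $H^2$) to vanish. As written, the proposal has a genuine gap at its crux.

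The paper closes this gap by an induction on $m$ using Proposition 4.1 of \cite{BorHua} (Lemma \ref{hp}): $H^p(X,\L)$ is a direct sum of reduced homology groups of simplicial complexes ${\rm Supp}({\bf r})$, the sum running over all toric presentations $\L\cong\cO(\sum_i r_iE_i)$. Strong exceptionality forces these complexes to be acyclic (contractible) for every ${\bf r}$ presenting $\L_i^{-1}\otimes\L_j$; tensoring by $\omega_X^{-1}=\cO(\sum_iE_i)$ replaces ${\bf r}$ by ${\bf r}+{\bf 1}$, and the key combinatorial observation is that contractibility of ${\rm Supp}({\bf r})$ implies contractibility of ${\rm Supp}({\bf r}+{\bf 1})$. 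Iterating gives the vanishing for all $m\ge 1$ uniformly, with no case split between small and large $m$. To complete your proof you need this monotonicity statement (or a substitute for it); note that the toric/combinatorial structure, not a positivity theorem, is what carries the argument, with the Fano hypothesis entering only through the existence of the collection (Theorem \ref{King's}) and the setup of $Y$ as the total space of $\omega_X$.
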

\begin{proof}
We need a technical lemma from \cite{BorHua} about cohomology of
line bundles on toric stacks.

For every ${\bf r}=(r_i)_{i=1}^n\in \ZZ^n$ we denote by ${\rm Supp}
({\bf r})$ the simplicial complex on $n$ vertices $\{1,\ldots,n\}$
which consists of all subsets $J\subseteq \{1,\ldots,n\}$ such that
$r_i\geq 0$ for all $i\in J$ and there exists a cone of $\Sigma$
that contains all $v_i, i\in J$. For example, if all coordinates
$r_i$ are negative then the simplicial complex ${\rm Supp}({\bf r})$
consists of the empty set only, and its geometric realization is the
zero cone of $\Sigma$. In the other extreme case, if all $r_i$ are
nonnegative then the simplicial complex ${\rm Supp}({\bf r})$
encodes the fan $\Sigma$, which is its geometric realization.

\begin{lemma}\label{hp}(Proposition 4.1 \cite{BorHua})
The cohomology $H^p(X_{\Sigma},\L)$ is isomorphic to the direct sum
over all ${\bf r}=(r_i)_{i=1}^n$ such that $\bigoplus(\sum_{i=1}^n
r_i E_i)\cong \L$ with $E_i$ being toric invariant divisors, of the
$({\rm rk}(N)-p)$-th reduced homology of the simplicial complex
${\rm Supp}({\bf r})$.
\end{lemma}

By adjunction,
$\Hom^d(\pi^*T,\pi^*T)=\bigoplus_{k\geq0}\Hom^d_X(T,T\otimes
\omega_X^{-k})$. In order to prove the proposition, it suffices to
show that $H^d(X,\L_i^{-1}\otimes \L_j\otimes \omega_X^{-1})=0$ for
$d=1,2$. Because $\L_0,\ldots,\L_j$ is strong excetpional, we have
$H^d(X,\L_i^{-1}\otimes \L_j)=0$ for $d=1,2$. Consider all the
possible integral linear combinations $\sum_{i=1}^m r_i E_i$ such
that $\cO(\sum_{i=1}^m r_i E_i)=\L_i^{-1}\otimes \L_j$. By Lemma
\ref{hp}, $H^d(X,\L_i^{-1}\otimes \L_j)=0$ for $d=1,2$ means
$\rm{Supp}(\bf{r})$ is contractible. Notice that if
$\rm{Supp}(\bf{r})$ is contractible then $\rm{Supp}(\bf{r}+1)$ is
also contractible. Then by Lemma \ref{hp} again,
$H^d(X,\L_i^{-1}\otimes \L_j\otimes \omega_X^{-1})=0$ for $d=1,2$.
\end{proof}

Now we can write $B$ simply by $\End(\pi^*T)$. It is also the path
algebra of a quiver with relations. This quiver can be constructed
by the same recipe as in the previous section. Let's denote it by
$\cQ_\omega$. Notice that $\cQ_\omega$ will have cyclic paths
because the pull back of exceptional objects will have homomorphisms
in both directions. Again, we have an equivalence of abelian
categories
\[
Rep_\bk (\cQ_\omega,\cI)\cong B-mod.
\]
The path algebra $B$ is naturally graded by the length of the path.
A $B$ module $M$ is called \emph{nilpotent} if there exists $k\gg 0$
such that $B_k M=0$. The exact functor $\RHom(\pi^*T, -)$ maps
$\D_\omega$ to the derived category of nilpotent $B$ modules
$D^b(mod_0-B)$.

The pushforward $\iota_*$ defines an exact functor from $\D^b(X)$ to
$\D_\omega$. Under Morita equivalence, $\iota_*(F_{-i}[i])$ are the
simple modules in $\D^b(mod_0-B)$ corresponding to those one
dimensional representations associated to each vertices of
$\cQ_\omega$.

Similarly, we call the self extension algebra
$$\Ext^\bullet_{B}(\bigoplus_{i=0}^n
\iota_*S_i,\bigoplus_{i=0}^n \iota_*S_i)$$ the Yoneda algebra,
denoted by $R_\omega$. It carries a natural $A_\infty$ structure as
well.

The following reconstruction theorem is due to Keller.

\begin{theorem}\label{ModAtoAinfty}
Let $A$ and $B$ as before. There are fully faithful functors
\[
\D^b(mod-A)\hookrightarrow\D_\infty(R)
\] and
\[
\D^b(mod-B)\hookrightarrow\D_\infty(R_\omega)
\] where $\D_\infty(R)$ and $\D_\infty(R_\omega)$ are derived
categories of modules over $A_\infty$-algebras.
\end{theorem}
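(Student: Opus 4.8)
The plan is to deduce both statements from the general reconstruction machinery for $A_\infty$-algebras, applied to the two situations at hand. First I would recall the precise form of Keller's theorem: if $\mathcal{A}$ is an abelian category of finite global dimension with enough projectives (or, in the graded/nilpotent setting, a suitable finiteness on the simple modules), and $S = \bigoplus_i S_i$ is the direct sum of the simples, then the derived category $\D^b(\mathcal{A})$ is equivalent to (a subcategory of) the derived category of $A_\infty$-modules over the Yoneda $A_\infty$-algebra $R = \Ext^\bullet_{\mathcal{A}}(S,S)$, with its canonical (Kadeishvili) $A_\infty$-structure obtained by homotopy transfer from the dg $\Hom$-complex of a projective (or injective) resolution. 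The fully faithful functor sends $M$ to the $A_\infty$-module $\Ext^\bullet_{\mathcal{A}}(S,M)$. I would cite Keller's paper on $A_\infty$-algebras and derived categories for this statement rather than reproving it.

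Next I would check that the two inputs $mod\text{-}A$ and $mod\text{-}B$ satisfy the hypotheses. For $A = \End(T)$: by Theorem \ref{morita} and Theorem \ref{King's}, $A$ is the path algebra of a finite quiver with relations coming from a full strong exceptional collection on the smooth stack $X$, so $mod\text{-}A \simeq \D^b(X)$ is the derived category of a smooth projective stack of dimension two and hence $A$ has finite global dimension; moreover $A$ is a finite-dimensional algebra, so the simples $S_i$ are finitely many and $R = \Ext_A^\bullet(\bigoplus S_i, \bigoplus S_i)$ is finite-dimensional in each degree and bounded. Keller's functor then gives the first embedding $\D^b(mod\text{-}A) \hookrightarrow \D_\infty(R)$. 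For $B = \End(\pi^*T)$: by Proposition \ref{helix cond}, $B$ is again an honest (non-dg) algebra, the path algebra of the quiver $\cQ_\omega$ with relations; it is graded by path length with finite-dimensional graded pieces, and the relevant simple modules $\iota_* S_i$ are the vertex simples. Here one must use the graded/nilpotent refinement of Keller's theorem — working with the category of (locally) nilpotent $B$-modules, on which the Yoneda algebra $R_\omega = \Ext^\bullet_B(\bigoplus \iota_* S_i, \bigoplus \iota_* S_i)$ is again a well-defined $A_\infty$-algebra with bounded-below, finite-dimensional homogeneous components — to get the second embedding $\D^b(mod\text{-}B) \hookrightarrow \D_\infty(R_\omega)$.

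I expect the main obstacle to be the second case, since $B$ has infinite global dimension (the fibre direction contributes infinitely many $\Ext$'s between simples, which is exactly why $R_\omega$ is genuinely $A_\infty$ with $\mu_k \neq 0$ for large $k$). The finite-resolution argument that works for $A$ fails, and one must instead invoke the version of the reconstruction theorem valid for graded algebras with a grading bounded below — equivalently, restrict to the subcategory of nilpotent (or suitably complete) modules so that the homotopy-transferred $A_\infty$-structure on $R_\omega$ converges and the comparison functor is fully faithful. I would handle this by quoting the graded form of Keller's theorem and checking that $D^b(mod_0\text{-}B)$, which by the discussion preceding the theorem is the image of $\D_\omega$, is the correct target: the simples $\iota_* F_{-i}[i]$ generate it, their total $\Ext$-algebra is $R_\omega$, and the completion/nilpotence hypothesis ensures the $A_\infty$-Yoneda module functor $M \mapsto \Ext^\bullet_B(\bigoplus \iota_* S_i, M)$ is fully faithful. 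The remaining points — compatibility of the $A_\infty$-structures with the ones used later for the Chern-Simons construction, and the precise delineation of the essential image — are routine bookkeeping once the two embeddings are in place.
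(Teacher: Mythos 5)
Your proposal follows the same route as the paper, which simply cites Theorem 4.3 of Keller's paper on $A_\infty$-algebras, modules and functor categories; your additional discussion of why $A$ and $B$ satisfy the hypotheses (finite global dimension for $A$, the graded/nilpotent refinement for $B$) is a sensible elaboration but not a different method. The approach is correct and matches the paper's one-line proof.
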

\begin{proof}
Theorem 4.3 of \cite{Keller}.
\end{proof}

We give the example of derived morita equivalence on $\PP^2$ and
local $\PP^2$.
\begin{example}\label{P2}
Let $X$ be $\PP^2$. The line bundles $\O,\O(1),\O(2)$ form a full
strong exceptional collection. Take the tilting bundle $T=\O\oplus
\O(1)\oplus \O(2)$. The quiver $\cQ$ is
\begin{equation}
\begin{xy} <1.5mm,0mm>:
  (0,0)*{\circ}="a",(20,0)*{\circ}="b",(40,0)*{\circ}="c",
  (0,-4)*{0},(20,-4)*{1},(40,-4)*{2}
  \ar@{->}@/^3mm/|{x} "a";"b"
  \ar@{->}@/^0mm/|{y} "a";"b"
  \ar@{->}@/_3mm/|{z} "a";"b"
  \ar@{->}@/^3mm/|{x'} "b";"c"
  \ar@{->}@/^0mm/|{y'} "b";"c"
  \ar@{->}@/_3mm/|{z'} "b";"c"
\end{xy}
\end{equation}

with the ideal of relations generated by
\[
x'y-y'x, y'z-z'y, z'x-x'z.
\]
The dual collection to $\O,\O(1),\O(2)$ is
$\Omega^2(2),\Omega^1(1),\O$. They map to simple modules
$S_2[-2],S_1[-1],S_0$ under $\RHom(T,-)$.

The roll-up helix algebra $B=\End(\pi^* T)$ is the path algebra of
the quiver $\cQ_\omega$

\begin{equation}
\begin{xy} <1.5mm,0mm>:
  (0,0)*{\circ}="a",(15,-20)*{\circ}="b",(-15,-20)*{\circ}="c",
  (0,4)*{0},(22,-22)*{1},(-17,-22)*{2}
  \ar@{->}@/^3mm/|{x} "a";"b"
  \ar@{->}@/^0mm/|{y} "a";"b"
  \ar@{->}@/_3mm/|{z} "a";"b"
  \ar@{->}@/^3mm/|{x'} "b";"c"
  \ar@{->}@/^0mm/|{y'} "b";"c"
  \ar@{->}@/_3mm/|{z'} "b";"c"
  \ar@{->}@/^3mm/|{x''} "c";"a"
  \ar@{->}@/^0mm/|{y''} "c";"a"
  \ar@{->}@/_3mm/|{z''} "c";"a"
\end{xy}
\end{equation}
with relations
\[
x'y-y'x, y'z-z'y, z'x-x'z;
\]
\[
x''y'-y''x', y''z'-z''y', z''x'-x''z';
\]
\[
xy''-yx'', yz''-zy'', zx''-xz''.
\]
\end{example}
\section{Cyclic completion of Yoneda algebra}\label{sec_cylic}

Two technical results are proved in this section.
\begin{enumerate}
\item[$\bullet$] First, we show the Yoneda algebra $L_\omega$ is the
cyclic completion of the Yoneda algebra $L$. This is the algebraic
counter part of the cotangent bundle construction.
\item[$\bullet$] Second, we show $\mu_k$ of
$L$ vanish when $k>\chi(X)$. Then by the cyclic completion
construction, so is true for $L_\omega$.
\end{enumerate}
Theorem \ref{Yoneda cyclic} was proved first by Aspinwall and
Fidkowski (Section 4.3 of \cite{Aspinwall}) and reproved in more
general form by Segal (Theorem 4.2 \cite{Segal}). For our own
convenience, we give a slightly different proof here. But the idea
is quite similar to \cite{Aspinwall} and \cite{Segal}.

These two results, together with the existence theorem of strong
exceptional collections (Theorem \ref{exceptional}) and Proposition
\ref{helix cond} guarantee the existence of global algebraic CS
functions. In fact, they provide a recipe to construct CS functions
starting from a strong exceptional collection satisfying Proposition
\ref{helix cond}.

\begin{definition}\label{cyclic completion}
Let $L=\bigoplus_{i=0}^d L^i$ be a finite dimensional $L_\infty$
algebra over $\bk$, with its $L_\infty$ products denoted by $\mu_k$.
Define $\bar{L}$ to be the graded vector space $L\oplus L[-d-1]$,
i.e. $\bar{L}^i=L^i\oplus (L^{d+1-i})^*$. Define the cyclic pairing
and $L_\infty$ products $\bar{\mu}_k: \wedge^k \bar{L} \to
\bar{L}[2-k]$ according to the following rules:
\begin{enumerate}
\item[(1)] define the bilinear form $\kappa$ on $\bar{L}$ by the natural pairing between
$L$ and $L^*$.
\item[(2)] if the inputs of $\bar{\mu}_k$ all belong to $L$ then define $\bar{\mu}_k=\mu_k$;
\item[(3)] if more than one input belong to $L^*$ then define
$\bar{\mu}_k=0$;
\item[(4)] if there is exactly one input $a_i^*\in L^*$ then define
$\bar{\mu}_k$ by
\[
\kappa(\bar{\mu}_k(a_1,\ldots,a_i^*,\ldots,a_k),b)=(-1)^{\epsilon}
\kappa(\mu_k(a_{i+1},\ldots,a_k,b,a_1,\ldots,a_{i-1}),a_i^*)\] for
arbitrary $b\in L$, where $\epsilon=|a_1|(|a_2|+\ldots+|b|)+\ldots+
|a_i^*|(|a_{i+1}|+\ldots+|b|)$;
\end{enumerate}
It is easy to check that $(\bar{L}, \bar{\mu}_k, \kappa)$ forms a
cyclic $L_\infty$ algebra. We call $\bar{L}$ the \emph{cyclic
completion} of $L$.
\end{definition}

We have defined the Yoneda algebras
$R=\Ext_A^\bullet(\bigoplus_{i=0}^n S_i,\bigoplus_{i=0}^n S_i)$ and
$R_\omega=\Ext^\bullet(\bigoplus_{i=0}^n
\iota_*S_i,\bigoplus_{i=0}^n \iota_*S_i)$ in previous section. Take
the associated $L_\infty$ algebras and denote them by $L$ and
$L_\omega$. The $d$ in Definition \ref{cyclic completion} equals $2$
since $X$ is a surface.

The following theorem will play a central role in this paper.
\begin{theorem}\label{Yoneda cyclic}
The Yoneda algebra $L_\omega$ is the cyclic completion of the Yoneda
algebra $L$.
\end{theorem}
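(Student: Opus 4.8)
The plan is to reduce, via the derived Morita equivalences, to a computation of a derived endomorphism algebra on $Y$, to recognize that algebra as a trivial (square--zero) extension of the corresponding algebra on $X$ by a shift of its dual bimodule, and to observe that such a trivial extension is exactly the cyclic completion of Definition~\ref{cyclic completion}; anti-symmetrizing then passes from the $A_\infty$-picture to the $L_\infty$-picture as in the paragraph ``Comments on $A_\infty$ and $L_\infty$ algebra'', and since anti-symmetrization commutes with this construction it suffices to work with $A_\infty$-algebras. Concretely, put $G=\bigoplus_{i=0}^{n}F_{-i}[i]\in\D^b(X)$. By Theorem~\ref{morita} and Proposition~\ref{helix cond} the quasi-inverse tilting functors identify the Yoneda $A_\infty$-algebras $R$ and $R_\omega$ with the derived endomorphism algebras $\RHom_X(G,G)$ and $\RHom_Y(\iota_*G,\iota_*G)$, computed with a fixed dg-enhancement (a \v{C}ech model, say). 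So it is enough to show that $\RHom_Y(\iota_*G,\iota_*G)$ is the cyclic completion of $\RHom_X(G,G)$.

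Next I would compute $\mathbf{R}\mathcal{H}om_{\O_Y}(\iota_*\O_X,\iota_*\O_X)$. Since $\iota$ is the zero section of the line bundle $\omega_X$, it is a regular embedding of codimension one with normal bundle $\omega_X$; its Koszul resolution is a sheaf of dg-algebras whose endomorphism dga is, by the local model $\mathbb{A}^{2}\hookrightarrow\mathbb{A}^{3}$, formal and has no higher products, so
\[
\mathbf{R}\mathcal{H}om_{\O_Y}(\iota_*\O_X,\iota_*\O_X)\;\simeq\;\iota_*\bigl(\O_X\oplus\omega_X[-1]\bigr),
\]
the trivial square--zero extension of $\iota_*\O_X$ by $\iota_*\omega_X[-1]$ (only $\wedge^0$ and $\wedge^1$ of the line bundle $\omega_X$ occur). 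Using that $\pi^*G$ and $\iota_*\O_X$ are perfect on the smooth stack $Y$, the projection formula, $\iota^*\pi^*=\mathrm{id}$, and $R\Gamma(Y,\iota_*(-))=R\Gamma(X,-)$, this produces an isomorphism of $A_\infty$-algebras
\[
\RHom_Y(\iota_*G,\iota_*G)\;\simeq\;R\Gamma\bigl(X,\ \mathbf{R}\mathcal{H}om_X(G,G)\otimes(\O_X\oplus\omega_X[-1])\bigr),
\]
exhibiting $R_\omega$ as the trivial extension of $R$ by the $R$-bimodule $M:=\RHom_X(G,G\otimes\omega_X)[-1]$.

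Finally I would identify $M$ with $R^{\vee}[-3]$. Serre duality on the surface $X$ gives $\RHom_X(G,G\otimes\omega_X)\simeq R^{\vee}[-2]$, hence $M\simeq R^{\vee}[-3]$; functoriality of Serre duality together with the cyclicity of the trace $\Ext^2_X(G,G\otimes\omega_X)\to\bk$ upgrades this to an isomorphism of $R$-bimodules in which $R^{\vee}$ carries the dual (coadjoint) structure, and the same trace supplies a non-degenerate cyclic pairing $\kappa$ on $R_\omega$ which is the tautological pairing between the two summands and vanishes on each separately. Thus $R_\omega\simeq R\ltimes R^{\vee}[-3]$ as a cyclic $A_\infty$-algebra, and comparing with Definition~\ref{cyclic completion} this is exactly $\bar R$: rule (1) is the pairing just described, (2) says $R$ is a sub-algebra, (3) says the ideal $R^{\vee}[-3]$ squares to zero, and (4) is precisely the statement that the (higher) bimodule-action maps of the dual bimodule $R^{\vee}[-3]$ over $R$ are $\kappa$-dual to the $\mu_k$ of $R$. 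Passing to $L_\infty$-algebras gives $L_\omega\simeq\bar L$.

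The part I expect to be the real work is the $A_\infty$/dg bookkeeping behind the two displayed isomorphisms: one has to choose dg-enhancements compatibly enough that the sheaf decomposition of $\mathbf{R}\mathcal{H}om_{\O_Y}(\iota_*\O_X,\iota_*\O_X)$, its tensor factorization, and the Serre-duality identification all hold \emph{strictly} at the level of $A_\infty$-algebras and $A_\infty$-bimodules --- in particular the formality of the local Koszul dga and the compatibility of the trace on $Y$ with that on $X$. This is the technical content already isolated in the arguments of Aspinwall--Fidkowski and of Segal, which this plan follows in spirit.
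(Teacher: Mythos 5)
Your proposal is correct and follows essentially the same route as the paper: both proofs resolve $\iota_*G$ by tensoring a projective resolution on $X$ with the two-term (Koszul) resolution $0\to\pi^*\omega_X^{-1}\to\O_Y\to\O_X\to0$ of the zero section, identify the extra summand via Serre duality with $R^{\vee}[-3]$, use the grading by powers of $\omega_X$ to see that at most one ``dual'' input can occur in any product, and get the cyclic pairing from the trace $\Ext^2_X(G,G\otimes\omega_X)\to\bk$. Your packaging of the conclusion as ``$R_\omega\simeq R\ltimes R^{\vee}[-3]$, the trivial extension by the dual bimodule'' is a clean restatement of the paper's direct verification of properties (2)--(4) of Definition~\ref{cyclic completion}, and you correctly isolate the same residual bookkeeping the paper (following Aspinwall--Fidkowski and Segal) handles at the level of the explicit double complex.
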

\begin{proof}
This can be done in three steps. First, we need to verify that
$L_\omega$ and $\bar{L}$ coincide as graded vector spaces. Second,
we will show the pairing on $\bar{L}$ defined by $(1)$ of
\ref{cyclic completion} coincides with the Serre pairing on
$L_\omega$. Finally, we need to check that the $L_\infty$ products
on $L_\omega$ satisfy properties $(2)-(4)$ in Definition \ref{cyclic
completion}.

Given an object $E\in \D^b(mod-B)\simeq \D^b(Y)$ that is scheme
theoretically support on $X$, one can view $E$ as a complex of
finitely generated $A$-modules.

There is a projective $A$ resolution $P^\bullet$ for $E$:
\[
\xymatrix{P^\bullet\ar[r] &E\ar[r] &0}
\] such that all $P^i$ are direct sum of copies of
$E_0,\ldots,E_{n}$.

Because $Y$ is the total space of canonical bundle over $X$, there
is a tautological short exact sequence of sheaves
\[
\xymatrix{0\ar[r] &\pi^*(\omega_X^{-1})\ar[r] &\O_Y\ar[r]
&\O_X\ar[r] &0}.
\]
Tensor it by $\pi^*E$ we obtain
\[
\xymatrix{0\ar[r] &\pi^*E(\omega_X^{-1})\ar[r] &\pi^*E\ar[r]
&\iota_*E\ar[r] &0}.
\]
Since $\pi^*$ preserves the projective modules, by replacing $E$
with $P^\bullet$ we obtain a projective $B$ resolutions of $\iota_*
E$ as total complex of the following double complex
\begin{equation}\label{resolution}
\xymatrix{ \ldots\ar[r]&\pi^*P^{-2}\ar[r]&\pi^*P^{-1}\ar[r]&\pi^*P^0\ar[r]&0\\
\ldots\ar[r]&\pi^*P^{-2}\otimes\pi^*\omega_X^{-1}\ar[u]\ar[r]&\pi^*P^{-1}\otimes\pi^*\omega_X^{-1}
\ar[u]\ar[r]&\pi^*P^0\otimes\pi^*\omega_X^{-1}\ar[u]\ar[r]&0}
\end{equation}
We denote this resolution of $\iota_* E$ by $P^\bullet_\omega$.

As graded vector space, $L_\omega$ is computed as cohomology of
$\Hom^\bullet_Y(P^\bullet_\omega, \iota_* E)$. Because
$P^\bullet_\omega$ is the total complex of the above double complex,
$\Hom^\bullet_Y(P^\bullet_\omega, \iota_* E)$ is quasi-isomorphic
with the total complex of the following double complex:
\[
\xymatrix{ \ldots\Hom(\pi^*P^{-2},\iota_*
E)\ar[d]&\Hom(\pi^*P^{-1},\iota_* E)\ar[l]\ar[d]
&\Hom(\pi^*P^0,\iota_* E)\ar[l]\ar[d]&0\ar[l]\\
\ldots\Hom(\pi^*P^{-2}\otimes\pi^*\omega_X^{-1},\iota_*
E)&\Hom(\pi^*P^{-1}\otimes\pi^*\omega_X^{-1},\iota_* E)
\ar[l]&\Hom(\pi^*P^0\otimes\pi^*\omega_X^{-1},\iota_*
E)\ar[l]&0\ar[l]}
\]
The spectral sequence associated to this double complex degenerates
at $E_1$ page. Using adjunction together with Serre duality, we
obtain
\\
$
\Hom(\iota_*E,\iota_*E)=\Hom_X(E,E);\\
\Ext^1(\iota_*E,\iota_*E)=\Ext^1_X(E,E)\oplus\Ext^2_X(E,E)^*;\\
\Ext^2(\iota_*E,\iota_*E)=\Ext^2_X(E,E)\oplus\Ext^1_X(E,E)^*;\\
\Ext^3(\iota_*E,\iota_*E)=\Hom_X(E,E)^*. $

The above fact holds for any object $E$ support scheme theoretically
on X. We are particularly interested in the case when $E$ is
$\oplus_{i=0}^n F_{-i}[i]$, i.e. the direct sum of simple objects in
$mod-A$. This identifies $L_\omega$ and $\bar{L}$ as graded vector
spaces since both will equal to $L \oplus L[-3]^*$.

To verify property $(1)$, we need to write down a bilinear pairing
$\kappa$ on $\Hom^\bullet(P^\bullet_\omega,P^\bullet_\omega)$ such
that its restriction on cohomology gives the obvious duality between
$L$ and $L^*$. By adjunction,
$\Hom^3(P^\bullet_\omega,P^\bullet_\omega)$ has a direct summand
$\Hom^2(\pi^*P^\bullet\otimes\omega_X^{-1},\pi^*P^\bullet)$, which
is isomorphic to $\Hom^2_X(P^\bullet,P^\bullet\otimes(\oplus_{k\leq
1} \omega_X^k))$. It contains the finite dimensional graded piece
$\Hom^2_X(P^\bullet,P^\bullet\otimes\omega)$, which has a trace map
to $H^2(X,\omega_X)\simeq\CC$. Given any two elements $x$ and $y$ in
$\Hom^\bullet(P^\bullet_\omega,P^\bullet_\omega)$, we define the
bilinear pairing $\kappa(x,y)$ to be the projection of $x\circ y$ to
the graded piece $\Hom^2_X(P^\bullet,P^\bullet\otimes\omega)$
following by the trace map. Clearly, the restriction of $\kappa$ on
cohomology satisfies property $(1)$.

Then we need to verify properties $(2)$ to $(4)$ for $L_\omega$. For
the dimension reason, it suffices to check the case when all the
inputs of the $L_\infty$ products $\mu_k$ are in $L^1_\omega$. Since
$L_\omega$ is constructed as cohomology of
$\Hom^\bullet(P^\bullet_\omega,P^\bullet_\omega)$, the element in
$L^1_\omega$ can be represented by either the vertical or horizontal
arrows in diagram \ref{resolution}. More specifically, a class in
$\Ext^1_X(E,E)$ is represented by a horizontal arrow and a class in
$\Ext^2_X(E,E)^*$ is represented by a vertical arrow. Then property
$(1)$ follows immediately since the rows of the double complex is
simply the pullback of $P^\bullet$ (up to $\otimes \omega_X^{-1}$),
which is the projective resolution of $E$.

If we write $\Ext^2(E,E)^*$ as $\Ext^0(E,E\otimes \omega_X)$, then
we will see that
\[\bar{\mu}_2: \Ext^1(E,E)\bigotimes
\Ext^0(E,E\otimes\omega_X)\to
\Ext^1(E,E\otimes\omega_X)\simeq\Ext^1(E,E)^*\] is the only non-zero
term that can involve $\Ext^2(E,E)^*$. For example, if both inputs
of $\mu_2$ belong to $\Ext^0(E,E\otimes\omega_X)$, the output would
be $\Ext^0(E,E\otimes \omega^2_X)$, which is not in $L^2_\omega$.
Similarly, this argument shows that any nonzero term of $\mu_k$ of
$L_\omega$ can at most involve one $\Ext^2(E,E)^*$ term. This proves
Property $(2)$.

Property $(3)$ is essentially the cyclic symmetry of $\mu_k$. Since
the $\kappa$ on cohomology is a restriction of a bilinear form
(denote also by $\kappa$) on the dga
$\Hom^\bullet(P^\bullet_\omega,P^\bullet_\omega)$ with differential
$d$. Property $(3)$ will follow from the following cyclic symmetry
properties on $\Hom^\bullet(P^\bullet_\omega,P^\bullet_\omega)$. For
arbitrary elements $x$, $y$ and $z$:
\begin{enumerate}
\item[$\diamond$] $\kappa(x,y)=\pm \kappa(y,x)$
\item[$\diamond$] $\kappa(dx,y)=\pm \kappa(dy,x)$;
\item[$\diamond$] $\kappa(x\circ y,z)=\pm \kappa(y\circ z,x)$.
\end{enumerate}
The first property is clear since the commutator is trace-free. The
trace map will factor through the morphism
$\Hom^2(P^\bullet,P^\bullet\otimes\omega)\to
L^3_\omega=\Ext^2(E,E\otimes\omega)\simeq \Hom(E,E)^*$. Therefore
trace of a coboundary will be zero. Then the second property will
follow from the Leibniz rule. The third property follows from the
first and associativity of product.

\end{proof}

\begin{remark}({\bf{The geometric meaning of cyclic
completion}})\label{geometry_cyclic} Recall from section
\ref{Linfty-dg mfd} that the completed symmetric algebra
$\widehat{Sym\ L[1]^*}$ (we omit $\tau^{>0}$ for simplicity) can be
interpreted as $\O_X^\bullet$ of the dg-manifold $\cX$ defined from
$L$. This dg-algebra can be viewed as the structure sheaf of the
graded linear manifold $M=L[1]$.

The odd cotangent bundle of the graded manifold $M$, denoted as
$T^*[-1]M$ is defined to be the graded manifold $L[1]\oplus
(L[1]^*[-1])$. As graded vector spaces, $T^*[-1]M$ is same as
$L_\omega[1]$. Then $\cO_{T^*[-1]M}$ coincide with $\widehat{Sym \
L_\omega[1]^*}$ as graded algebras. The $L_\infty$ products
$\bar{\mu}_k$ defines a derivation on $\cO_{T^*[-1]M}$ and the
cyclic pairing $\kappa$ defines an odd two form on $T^*[-1]M$. In
fact this process is functorial. Hence, passing to the cyclic
completion of a $L_\infty$ algebra is an algebraic counterpart for
taking odd cotangent bundle of a dg-manifold.
\end{remark}

The $L_\infty$ (or $A_\infty$) structure of the Yoneda algebra $L$
has been studied for a long time in representation theory of finite
dimensional algebra. The following boundness theorem turns out to be
very important for the purpose of this paper.

\begin{theorem}\label{boundness of mu_k}
The $L_\infty$ products (higher brackets) $\mu_k$ on $L$ vanish when
$k> \chi(X)$.
\end{theorem}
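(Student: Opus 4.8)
The plan is to prove the sharper statement that the minimal $A_\infty$-operations $m_k$ on the Yoneda algebra $R=\Ext^\bullet_A(\bigoplus_{i=0}^n S_i,\bigoplus_{i=0}^n S_i)$ vanish already for $k\geq \chi(X)$; since the $L_\infty$-brackets $\mu_k$ on $L$ are obtained from the $m_k$ by anti-symmetrization, $m_k=0$ forces $\mu_k=0$, so this gives the theorem (with one index to spare). The entire argument takes place inside the finite-dimensional algebra $A=\End(T)=\CC\cQ/\cI$ and uses only that the quiver $\cQ$ is \emph{acyclic}: by Definition \ref{exceptional} a strong exceptional collection satisfies $\Hom(E_i,E_i)=\CC$ and $\Ext^\bullet(E_j,E_k)=0$ for $j>k$, so $\cQ$ has no loops and no arrow that lowers a vertex index, hence every arrow strictly raises it. I will fix the canonical minimal structure on $R$ to be strictly unital over the semisimple subalgebra $R^0=\bigoplus_i\CC e_i$, as one always may.

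First I would record the ``directedness'' of $R$. Write $R_{ij}=\Ext^\bullet_A(S_i,S_j)$, so $R=\bigoplus_{i,j}R_{ij}$ with $R^0_{ij}=\Hom_A(S_i,S_j)=\delta_{ij}\CC$. The key input is the vanishing
\[
\Ext^p_A(S_i,S_j)=0 \qquad\text{unless } j\geq i+p .
\]
This follows from a minimality argument for projective resolutions: let $Q^\bullet\to S_i$ be the minimal projective resolution, so $Q^0=P_i$. Since the arrows out of vertex $i$ all go to strictly larger vertices, $\mathrm{top}(\mathrm{rad}\,P_i)$ is a sum of $S_c$ with $c\geq i+1$, whence $Q^{-1}=\bigoplus P_c$ with $c\geq i+1$; iterating, using that minimality puts the $p$-th syzygy inside $\mathrm{rad}(Q^{-p+1})$, one gets $Q^{-p}=\bigoplus P_c$ with $c\geq i+p$. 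Minimality also makes the differential of $\Hom_A(Q^\bullet,S_j)$ zero, so $\Ext^p_A(S_i,S_j)=\Hom_A(Q^{-p},S_j)$, which is nonzero only if $P_j$ occurs in $Q^{-p}$, i.e. only if $j\geq i+p$. (As a byproduct $\mathrm{gl.dim}(A)\leq n=\chi(X)-1$.)

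Next I would combine this with the multiplicativity of the $A_\infty$-operations. Because the structure is strictly unital over $R^0$, the $m_k$ are $R^0$-multilinear and therefore respect the decomposition of $R$ along the idempotents $e_i$:
\[
m_k\colon R_{i_0i_1}\otimes R_{i_1i_2}\otimes\cdots\otimes R_{i_{k-1}i_k}\longrightarrow R_{i_0i_k},
\]
and $m_k$ annihilates any tensor with a factor in $R^0$ when $k\geq 3$. Hence, to check $m_k$ on basis elements for $k\geq 3$ it suffices to take inputs $a_\ell\in R^{p_\ell}_{i_{\ell-1}i_\ell}$ with $p_\ell\geq 1$; by the displayed vanishing, nonvanishing of each input forces $i_\ell-i_{\ell-1}\geq p_\ell\geq 1$. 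Summing along the chain and telescoping,
\[
k\ \leq\ \sum_{\ell=1}^k p_\ell\ \leq\ \sum_{\ell=1}^k\bigl(i_\ell-i_{\ell-1}\bigr)\ =\ i_k-i_0\ \leq\ n\ =\ \chi(X)-1 ,
\]
so $m_k=0$ for $k\geq\chi(X)$, a fortiori for $k>\chi(X)$, and the corresponding $\mu_k$ on $L$ vanish. (By Definition \ref{cyclic completion} the brackets $\bar\mu_k$ of the cyclic completion are built from the $\mu_k$, so Theorem \ref{Yoneda cyclic} then yields the same bound for $L_\omega$.)

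The step that needs the most care is the vanishing $\Ext^p_A(S_i,S_j)=0$ for $j<i+p$: one must justify that the quiver can genuinely be taken acyclic with all arrows raising the vertex index — this is precisely the strong-exceptional-collection hypothesis recorded in Definition \ref{exceptional} — and that the \emph{minimal} projective resolution of a simple module moves down the vertex order by at least one step for each increase in homological degree. Everything else (the $R^0$-bilinearity and strict unitality of the minimal model, and the passage from $m_k$ to $\mu_k$ by anti-symmetrization) is standard.
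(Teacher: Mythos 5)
Your proof is correct, and it rests on the same underlying observation as the paper's — the extra grading by the ordering of the exceptional collection forces any nonzero $A_\infty$-product to ``travel'' at least one step per argument, so the arity is bounded by the length of the collection. The implementation, however, is genuinely different and more self-contained: the paper quotes Keller's interpretation of the $m_k$ on the Yoneda algebra as the (higher) relations of the path algebra and then argues that a directed quiver on $\chi(X)$ vertices admits no relations among paths of length $>\chi(X)$, whereas you bypass that dictionary entirely by establishing the vanishing $\Ext^p_A(S_i,S_j)=0$ for $j<i+p$ via minimal projective resolutions of the simples (using that all arrows strictly raise the vertex index), and then combining it with the $R^0$-multilinearity and strict unitality of the minimal model. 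What your route buys is (i) a complete argument covering all input degrees, not just the $(\Ext^1)^{\otimes k}\to\Ext^2$ components that correspond to relations, and (ii) the slightly sharper bound $m_k=0$ for $k\geq\chi(X)$ rather than $k>\chi(X)$; what the paper's route buys is brevity and a conceptual link to the presentation of $A$ by a quiver with relations, which it reuses elsewhere (e.g.\ in Lemma \ref{MC-relations}). The only points requiring care in your version — that the quiver is directed with strictly increasing arrows, and that the minimal model can be taken strictly unital over the semisimple subalgebra $R^0$ — are both standard and correctly justified.
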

\begin{proof}
Let $A$ be a finite dimensional algebra and $\{S_i\}$ be the
collection of simple $A$-modules. It is well known that the Yoneda
algebra $R=\Ext^\bullet_A(\oplus S_i,\oplus S_i)$ controls the
deformation of $A$, because it is quasi-isomorphic (as $A_\infty$
algebras) to the Hochschild homology of $\D^b(mod-A)$ (See
\cite{Hoch}). If $A$ is presented as a path algebra of a quiver with
relations, then the $A_\infty$ products $m_k$ on $R$ can be
interpreted as relations of the path algebra (See section 7.8 of
\cite{Keller}).

Since in our situation the quiver is constructed from a strong
exceptional collection of line bundles on $X$ (recall the
construction in section 3), the elements in the path algebra $A$
carry an extra grading given by the ordering on the strong
exceptional collection. The $A_\infty$ products preserve this extra
grading. Therefore the length of strong exceptional collection,
which is equal to the Euler characteristic $\chi(X)$, gives an upper
bound for number of non-vanishing $m_k$. This is clear intuitively
since on a directed quiver generating by length $4$ strong
exceptional collection, there cannot be a relation involving length
$5$ paths.

Finally, we pass from $A_\infty$ algebra to $L_\infty$ algebra.
Since $L$ is the antisymmetrization of $R$, $\mu_k=0$ when $m_k=0$.
\end{proof}

The interpretation of Yoneda algebra as Hochschild homology also
gives the following corollary.
\begin{corollary}
The Yoneda algebra $R$ constructed from different strong exceptional
collections on $X$ are quasi-isomorphic as $A_\infty$ algebras, i.e.
it is an invariant of the derived category $\D^b(X)$.
\end{corollary}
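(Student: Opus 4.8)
The plan is to recognise $R$, as an $A_\infty$-algebra, as an invariant attached to the (dg-enhanced) triangulated category $\D^b(X)$ itself, so that its quasi-isomorphism class is forced to be independent of the auxiliary strong exceptional collection. Fix two full strong exceptional collections of line bundles $\L_0,\ldots,\L_n$ and $\L'_0,\ldots,\L'_m$ on $X$, with tilting bundles $T,T'$ and algebras $A=\End(T)$, $A'=\End(T')$. By Theorem \ref{morita} the functors $\RHom(T,-)$ and $\RHom(T',-)$ give equivalences $\D^b(mod-A)\simeq\D^b(X)\simeq\D^b(mod-A')$, so $A$ and $A'$ are derived equivalent. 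First I would record that under $\RHom(T,-)$ the dual collection sends $F_{-i}$ to $S_i[-i]$; hence the object $G:=\bigoplus_{i=0}^{n}F_{-i}[i]\in\D^b(X)$ is carried to $\bigoplus_i S_i$, and therefore $R\simeq\Ext^\bullet_{\D^b(X)}(G,G)$ with the $A_\infty$-structure transported from the canonical dg-enhancement of $\D^b(X)$; the same holds for $A'$ with an object $G'$. This reduces the statement to comparing the $A_\infty$ endomorphism algebras of two generators $G,G'$ of the \emph{fixed} category $\D^b(X)$.

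The second and central step is to use the interpretation already invoked in the proof of Theorem \ref{boundness of mu_k}: $R=\Ext^\bullet_A(\bigoplus S_i,\bigoplus S_i)$ is the $A_\infty$-algebra governing the derived deformations of $A$, quasi-isomorphic to the Hochschild cochain complex of the dg-category $\D^b(mod-A)$ (see \cite{Hoch} and Section 7 of \cite{Keller}). Hochschild cohomology, together with all its higher structure, is a derived invariant: an equivalence $\D^b(mod-A)\simeq\D^b(mod-A')$ induces an $A_\infty$ quasi-isomorphism of the associated Hochschild complexes. Composing these maps gives $R\simeq HH^\bullet(\D^b(X))\simeq R'$, which is precisely the claim that $R$ depends only on $\D^b(X)$. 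Passing to the antisymmetrization $L$ and then, via Theorem \ref{Yoneda cyclic}, to its cyclic completion $L_\omega$, the same invariance propagates to $L$ and $L_\omega$, and the boundness of the higher products (Theorem \ref{boundness of mu_k}) transfers between collections for free.

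The point that requires genuine care — and where I expect the main obstacle — is the precise form and naturality of the Hochschild identification in the previous paragraph. Literally the Hochschild cochain complex is $\RHom_{A^e}(A,A)$, not the Ext-algebra of the simples, and a derived equivalence need not carry simple modules to simple modules, so matching $R$ with a manifestly intrinsic invariant is not automatic. The honest way to close this gap is to argue directly inside the fixed dg-enhancement of $\D^b(X)$: show that the generator $G$, whose $A_\infty$-endomorphism algebra is $R$, is sent to an object with quasi-isomorphic $A_\infty$-endomorphism algebra under each of the operations relating two full strong exceptional collections of line bundles on a toric Fano surface — tensoring by a line bundle being the tautological case, and the remaining mutation/rotation moves requiring one to check that the induced comparison is a genuine quasi-isomorphism of $A_\infty$-algebras and not merely a derived equivalence of their module categories (here one can use that the $\RHom$-algebra of a generator is a homotopy invariant together with Theorem \ref{ModAtoAinfty}). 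Verifying this, together with the fact that all such collections are connected by these moves, is the substantive content behind the corollary.
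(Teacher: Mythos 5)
Your central step --- identifying $R$ with the Hochschild complex of $\D^b(mod\text{-}A)$ and invoking derived invariance of that complex with all its higher structure --- is exactly the paper's argument; the paper's entire proof is the citation ``See \cite{Hoch}'', leaning on the identification already asserted in the proof of Theorem \ref{boundness of mu_k}. Your closing paragraph flagging the gap between the Yoneda algebra of the simples and the literal Hochschild cochain complex $\RHom_{A^e}(A,A)$ is a legitimate concern that the paper does not address, but it does not change the fact that your route is the same as the paper's.
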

\begin{proof}
See \cite{Hoch}.
\end{proof}

\section{Moduli space and Chern-Simons function}\label{sec_CS}
We fix the ground field $\bk=\CC$. Let $\Gamma$ be the Grothendieck
group of $\D_\omega$. By the derived Morita equivalence, $\Gamma$
also equals to the Grothendieck group of derived category of
nilpotent representations of $\cQ_\omega$. It is a free abelian
group of rank $n+1$ generated by the collection of simple modules
$\iota_*S_0,\ldots,\iota_*S_n$. If we fix these simple modules as a
$\ZZ$-basis of $\Gamma$, every effective class can be written as a
vector $\bd=(d_0,\ldots,d_n)$ with nonnegative entries. We call such
a choice of $\bd$ a dimension vector.

\begin{theorem} \label{moduli of sheaves}
Let $X$ be a toric Fano stack of dimension two and $Y$ be its total
space of canonical bundle. Pick a strong exceptional collection
constructed by Theorem \cite{BorHua} and denote the corresponding
quiver of $Y$ by $\cQ_\omega$. Let $\mathfrak{M}_\gamma$ be a
bounded family of sheaves on $Y$ support on $X$ with class
$\gamma\in\Gamma$. There exists a dimension vector $\bd$ and an open
immersion of Artin stacks from $\mathfrak{M}_\gamma$ to the quotient
stack $[MC(L_{\omega,\bd})/G_\bd]$, where $MC(L_{\omega,\bd})$ is
the space of nilpotent representations of $\cQ_\omega$ with
dimension vector $\bd$ and $G_\bd$ (defined later in this section)
is the gauge group acting by changing of basis.
\end{theorem}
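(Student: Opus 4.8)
The statement packages together two things: (a) an identification of the coarse moduli data of sheaves on $Y$ supported on $X$ with nilpotent representations of $\cQ_\omega$ of a fixed dimension vector, modulo base change, and (b) the fact that such representations are precisely the Maurer--Cartan points of the finite-dimensional $L_\infty$ algebra $L_{\omega,\bd}$ attached to the Yoneda algebra $R_\omega$. The first I would get from the derived Morita equivalence of Proposition~\ref{helix cond} together with the fact that $\RHom(\pi^*T,-)$ sends $\D_\omega$ to $\D^b(mod_0-B)$ (the nilpotent $B$-modules); the second from the general yoga of $L_\infty$ algebras controlling deformations, specialized at the maximally degenerate (semisimple) point. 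So my proof would proceed in four steps.

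First, I would fix $\gamma\in\Gamma$ and observe that $\gamma$ is a nonnegative combination $\sum d_i[\iota_*S_i]$ of the classes of the simple modules, because the bounded family $\mathfrak{M}_\gamma$ consists of objects of $\D_\omega$ with genuine sheaf cohomology, hence honest nilpotent $B$-modules with a well-defined Jordan--H\"older content; set $\bd=(d_0,\dots,d_n)$. Under $\RHom(\pi^*T,-)$ a sheaf $\F$ on $Y$ supported on $X$ in the class $\gamma$ goes to a nilpotent representation of $\cQ_\omega$ of dimension vector $\bd$. Conversely, since $\mathfrak{M}_\gamma$ is bounded one can choose the family of line bundles in the strong exceptional collection so that the relevant $\F$ lie in the heart of the standard $t$-structure coming from $\pi^*T$, and then $-\otimes^{\mathbf L}_B(\pi^*T)$ sends the corresponding representation back to a sheaf; these two operations are mutually inverse on the locus in question. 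This gives a monomorphism of moduli functors $\mathfrak{M}_\gamma\to \underline{\mathrm{Rep}}_{\bd}(\cQ_\omega)^{nilp}$ which is an open immersion because membership in the heart and the support condition are open, and one passes to quotient stacks by the change-of-basis group $G_\bd=\prod_i GL(d_i)$ acting on the total space of arrow matrices.

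Second, I would identify the space of nilpotent representations with the Maurer--Cartan locus $MC(L_{\omega,\bd})$. The maximally degenerate point of $\underline{\mathrm{Rep}}_{\bd}(\cQ_\omega)$ is the semisimple representation $\bigoplus_i S_i^{\oplus d_i}$, whose deformation theory is governed (as an $A_\infty$ / $L_\infty$ algebra) by $R_\omega\otimes \mathrm{End}$ of the multiplicity spaces, i.e.\ by $L_{\omega,\bd}$ in the notation of the theorem; this is exactly Keller's reconstruction (Theorem~\ref{ModAtoAinfty}) together with the interpretation of the higher products $m_k$ as the relations of the path algebra quoted in the proof of Theorem~\ref{boundness of mu_k}. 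Concretely $L^1_{\omega,\bd}$ is the space of arrow matrices and the Maurer--Cartan equation $\sum \frac{1}{k!}\mu_k(x,\dots,x)=0$ reproduces precisely the relations of $(\cQ_\omega,\cI)$; boundedness (Theorem~\ref{boundness of mu_k}) makes the sum finite, so $MC(L_{\omega,\bd})$ is an honest affine scheme, and by construction it is $G_\bd$-equivariantly isomorphic to the nilpotent representation scheme. Combining with the first step yields the asserted open immersion $\mathfrak{M}_\gamma\hookrightarrow[MC(L_{\omega,\bd})/G_\bd]$.

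Third, I would spell out the definition of $G_\bd$ and check the stack-theoretic statements: that $G_\bd=\prod_{i=0}^n GL_{d_i}(\CC)$ acts on $MC(L_{\omega,\bd})\subset L^1_{\omega,\bd}$ by conjugation on the matrices, that the quotient $[MC(L_{\omega,\bd})/G_\bd]$ is an Artin stack, and that the map from $\mathfrak{M}_\gamma$ is representable by open immersions (i.e.\ after base change along any scheme it is an open immersion of schemes/algebraic spaces), which reduces to the openness assertions already noted. I would also remark, as the paper's surrounding discussion does, that one can cut down to the substack of \emph{stable} (or semistable) sheaves to get an open immersion into the corresponding GIT-type quotient, but the statement as phrased only needs the Artin-stack version. \textbf{The main obstacle} is the first step: making precise — and uniform in the family, using boundedness of $\mathfrak{M}_\gamma$ — the claim that every sheaf in the family lies in the heart determined by $\pi^*T$, so that the equivalence of Proposition~\ref{helix cond} restricts to an actual isomorphism of moduli \emph{functors} rather than merely a bijection on points; this is where the choice of strong exceptional collection (and possibly a twist of it) genuinely enters, and where I would need to invoke the boundedness hypothesis carefully rather than wave at it.
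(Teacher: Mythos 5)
Your proposal is correct and follows essentially the same route as the paper: twist the tilting bundle $T\otimes L^{-N}$ with $N\gg 0$ (using boundedness) so the whole family lands in the heart, get a monomorphism of stacks from the derived Morita equivalence, identify $MC(L_{\omega,\bd})$ with nilpotent representations via Keller/Segal (Lemma~\ref{MC-relations}), and conclude an open immersion from the identification of deformation theory. The only place the paper is more explicit is in verifying \'etaleness via the infinitesimal lifting criterion for small extensions (bijection on deformation spaces, injection on obstruction spaces), which is the precise content behind your appeal to openness of the heart condition plus the $L_\infty$ control of deformations.
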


\begin{theorem}\label{Critical set}
Given a class $\gamma\in \Gamma$, a bounded family of sheaves on $Y$
support on $X$ with class $\gamma$ is the critical set of an
algebraic function $f_\bd$. We call such a function a CS function.
\end{theorem}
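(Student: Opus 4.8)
The plan is to deduce the statement directly from Theorem~\ref{moduli of sheaves}, the cyclic completion Theorem~\ref{Yoneda cyclic}, the boundedness Theorem~\ref{boundness of mu_k}, and the formal dg-manifold picture of Example~\ref{Linfty-dg mfd}. First I would fix the dimension vector $\bd$ furnished by Theorem~\ref{moduli of sheaves}, so that there is an open immersion $\mathfrak{M}_\gamma \hookrightarrow [MC(L_{\omega,\bd})/G_\bd]$. Here $L_{\omega,\bd}$ is the finite-dimensional $L_\infty$ algebra controlling deformations of the semisimple $B$-module $\bigoplus_{i=0}^n (\iota_*S_i)^{\oplus d_i}$; it is obtained from the Yoneda $L_\infty$ algebra $L_\omega$ by the standard gluing construction, tensoring $\Ext^\bullet(\iota_*S_i,\iota_*S_j)$ with the matrix spaces $\Hom_\bk(\bk^{d_i},\bk^{d_j})$, so that $L^1_{\omega,\bd}$ is the affine space of representations of $\cQ_\omega$ of dimension vector $\bd$ before imposing relations, and $G_\bd=\prod_i GL(d_i)$ acts by change of basis.

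Second I would produce the function. By Theorem~\ref{Yoneda cyclic}, $L_\omega$ is the cyclic completion of $L$, hence a cyclic $L_\infty$ algebra with pairing $\kappa\colon L_\omega\otimes L_\omega\to\bk[-3]$. Combining $\kappa$ with the trace pairings on the matrix factors yields a cyclic pairing $\kappa_\bd$ on $L_{\omega,\bd}$ which is graded symmetric, invariant, non-degenerate on cohomology, and $G_\bd$-invariant. Definition~\ref{hamiltonian} then gives the Chern--Simons function $f_\bd(z)=\sum_{k\ge1}\tfrac{(-1)^{k(k+1)/2}}{(k+1)!}\,\kappa_\bd(\bar{\mu}_k(z,\dots,z),z)$ on $L^1_{\omega,\bd}$, manifestly $G_\bd$-invariant. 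The crucial point is that $f_\bd$ is \emph{algebraic}: by Theorem~\ref{boundness of mu_k} the products $\mu_k$ on $L$ vanish for $k>\chi(X)$, and — as observed in the second bullet of Section~\ref{sec_cylic} — the cyclic completion transports this vanishing to the $\bar{\mu}_k$ on $L_\omega$, hence to those on $L_{\omega,\bd}$, so the defining sum is finite and $f_\bd$ is a genuine polynomial on the representation space.

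Third I would identify $crit(f_\bd)$ with the Maurer--Cartan locus. By Example~\ref{Linfty-dg mfd} the formal dg-manifold attached to $L_{\omega,\bd}$ has underlying space $L^1_{\omega,\bd}$ with $0$-truncation $MC(L_{\omega,\bd})$, and by the discussion following that example $f_\bd$ is the Hamiltonian of the odd vector field $q$ with respect to $\kappa_\bd$, so $crit(f_\bd)=MC(L_{\omega,\bd})$; since $f_\bd$ is a polynomial on an affine variety this is an honest scheme-theoretic equality. Equivalently, the relations of $\cQ_\omega$ are recovered as the cyclic derivatives $\partial f_\bd$, exhibiting $B$ as the Jacobi algebra of the superpotential $f_\bd$ — the algebraic shadow of the Calabi--Yau-3 condition. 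Combining this with the open immersion of Theorem~\ref{moduli of sheaves} and $G_\bd$-invariance, $\mathfrak{M}_\gamma$ is an open substack of $[crit(f_\bd)/G_\bd]$, which is the assertion.

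I expect the main obstacle to be the passage from the \emph{formal} identity $crit(f)=MC(L)$ to a \emph{global}, scheme-theoretic statement valid on the entire affine representation space for every dimension vector: one must check that the gluing construction $L_\omega\rightsquigarrow L_{\omega,\bd}$ is compatible with the cyclic structure and with the vanishing of higher products, that the trace-twisted pairing $\kappa_\bd$ is still non-degenerate on the relevant cohomology (so that $f_\bd$ genuinely cuts out a critical locus rather than some larger degeneracy), and that the Jacobian ideal of $f_\bd$ equals the ideal of relations of $\cQ_\omega$ rather than merely being contained in it. Verifying this last point — that no spurious critical directions appear after specializing to a fixed $\bd$ — is the technical heart of the argument.
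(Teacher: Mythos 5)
Your proposal follows the paper's own argument essentially verbatim: take the formal Chern--Simons function of Definition~\ref{hamiltonian} for the cyclic $L_\infty$ algebra $L_{\omega,\bd}$, identify its critical locus with $MC(L_{\omega,\bd})$, invoke Theorem~\ref{boundness of mu_k} to see the sum is a polynomial of degree at most $\chi(X)$, note $G_\bd$-invariance, and restrict along the open immersion of Theorem~\ref{moduli of sheaves}. The additional caveats you raise (compatibility of the gluing $L_\omega\rightsquigarrow L_{\omega,\bd}$ with the cyclic structure and the vanishing of higher products) are points the paper dispatches with the remark that the results of Section~\ref{sec_cylic} ``clearly generalize'' to $L_\bd$ and $L_{\omega,\bd}$, so your write-up is, if anything, more careful than the original.
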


The infinitesimal deformation of representations is controlled by
the following $L_\infty$ algebra.

Fix a dimension vector $\bd$, define
\[
L_{\bd}:=\Ext^\bullet(\bigoplus_{i=0}^{n} S_i\otimes
V_i,\bigoplus_{i=0}^{n} S_i\otimes V_i)
\] and
\[
L_{\omega,\bd}:=\Ext^\bullet(\bigoplus_{i=0}^{n} \iota_*S_i\otimes
V_i,\bigoplus_{i=0}^{n} \iota_*S_i\otimes V_i)
\]
where $V_i$ are vector spaces of dimension $d_i$. They are
generalizations of the Yoneda algebras. If we take
$\bd=(1,\ldots,1)$ we obtain the Yoneda algebras. All the results in
Section \ref{sec_cylic} clearly generalize to $L_\bd$ and
$L_{\omega,\bd}$.

The space $L^1_\bd$ (resp. $L^1_{\omega,\bd}$) can be identified
with the space $\bigoplus_{a\in \cQ_1} \Hom(V_i,V_j)$ (resp. $a\in
{\cQ_\omega}_1$) of matrices summing over all the arrows. It carries
a natural bi-grading by the source and target of arrow. The space
$L^0_\bd$ (resp. $L^0_{\omega,\bd}$) can be identified with the
space $\bigoplus_{i\in \cQ_0} \End(V_i)$, which is the Lie algebra
associate to the group $\prod_{i\in \cQ_0} GL(V_i)$. We denote this
group by $G_\bd$ for simplicity. It acts on $L_\bd$ (resp.
$L_{\omega,\bd}$) by conjugation.

\begin{lemma}\label{MC-relations}
The elements of $MC(L_\bd)$ (resp. $MC(L_{\omega,\bd})$) are in one
to one correspondence with representations of $\cQ$ (resp.
$\cQ_\omega$) of dimension vector $\bd$. Two representations are
isomorphic if and only if they belong to the same orbits of $G_\bd$.
\end{lemma}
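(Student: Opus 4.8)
The plan is to unwind the definition of the Maurer–Cartan locus of $L_\bd$ (resp.\ $L_{\omega,\bd}$) and match it term by term with the defining equations of a quiver representation. First I would recall from the discussion preceding the lemma that $L^1_\bd$ is canonically identified with $\bigoplus_{a\in\cQ_1}\Hom(V_i,V_j)$, so an element $x\in L^1_\bd$ \emph{is} precisely the data of a matrix $x^{i,j}_a$ for every arrow $a\colon i\to j$ — i.e.\ the underlying data of a candidate representation of $\cQ$ with dimension vector $\bd$. The only thing that remains to see is that the Maurer–Cartan equation $\sum_{k\ge 1}\frac{1}{k!}\mu_k(x,\ldots,x)=0$ is equivalent to the vanishing of the matrices attached to the generators of the ideal of relations $\cI$. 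For this I would invoke the interpretation of the $A_\infty$ products $m_k$ on the Yoneda algebra $R$ as encoding the relations of the path algebra (Keller, §7.8, cited in the proof of Theorem~\ref{boundness of mu_k}): the component $\mu_k(x,\ldots,x)$ landing in $L^2_\bd=\bigoplus\Hom(V_i,V_j)$ over a path of length $k$ reproduces the corresponding term of the relation, so the MC equation with these matrix inputs is exactly the statement that the matrices assigned to all elements of $\cI$ vanish. That is the content of ``a representation of $(\cQ,\cI)$'' as spelled out in Section~\ref{sec_morita}. The same argument applies verbatim to $L_{\omega,\bd}$ and $\cQ_\omega$, using Theorem~\ref{Yoneda cyclic} (and its evident extension to general $\bd$) so that the higher products of $L_{\omega,\bd}$ are determined by those of $L_\bd$ via cyclic completion and therefore still compute the relations of $\cQ_\omega$.

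For the second assertion, note that $G_\bd=\prod_{i\in\cQ_0}GL(V_i)$ acts on $L_\bd$ by conjugation, and under the identification above this is precisely the action on $\bigoplus_a\Hom(V_i,V_j)$ by simultaneous change of basis in each $V_i$. Two tuples of matrices define isomorphic representations of $\cQ$ if and only if there is a collection $g_i\in GL(V_i)$ conjugating one to the other — this is the standard description of isomorphism classes of quiver representations with fixed dimension vector. Hence the orbits of $G_\bd$ on $MC(L_\bd)$ are exactly the isomorphism classes, and likewise for $MC(L_{\omega,\bd})$.

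The main obstacle is the first step: making precise that the higher-$\mu_k$ contributions to the Maurer–Cartan equation really do reproduce \emph{all} the relations and nothing spurious. This requires knowing the $A_\infty$ structure on the Yoneda algebra explicitly enough (via the minimal model / Keller's reconstruction, Theorem~\ref{ModAtoAinfty}) that the map ``relations of $\cQ$'' $\leftrightarrow$ ``$m_k$ on $R$'' is an equivalence rather than merely a surjection, together with the boundedness Theorem~\ref{boundness of mu_k} which guarantees the formal sum in the Maurer–Cartan equation is finite and hence genuinely convergent. Once that dictionary is in place the rest is bookkeeping. I would therefore organize the proof as: (i) identify $x\in L^1$ with quiver data; (ii) use the $m_k\leftrightarrow$ relations correspondence plus finiteness of the sum to rewrite $\sum\frac{1}{k!}\mu_k(x,\ldots,x)=0$ as the vanishing of the matrices of $\cI$; (iii) conclude the bijection with representations; (iv) unwind the conjugation action to get the orbit statement; and finally (v) remark that all of (i)--(iv) transport to the local-surface side by the cyclic completion Theorem~\ref{Yoneda cyclic}.
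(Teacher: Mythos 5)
Your proposal is correct and follows essentially the same route as the paper: the paper's proof is simply a citation of Keller (Section 7.8) and Segal (Proposition 3.8), which is precisely the dictionary between the $A_\infty$ products on the Yoneda algebra and the relations of the path algebra that you invoke, and your unwinding of the $G_\bd$-conjugation action is the standard orbit description. You have in effect written out the content of the cited results rather than deviating from them.
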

\begin{proof}
See Section 7.8 of \cite{Keller} or Proposition 3.8 \cite{Segal}.
\end{proof}

The $L_\infty$ algebra $L$ (resp. $L_{\omega,\bd}$) controls the
infinitesimal deformation of representations in the following sense.
Let $M$ be an $A$ module (resp. $B$ module) with dimension vector
$\bd$. We denote its corresponding Maurer-Cartan element by $x$. The
homology groups $H^i(L_\bd,\delta^x)$ (resp.
$H^i(L_{\omega,\bd},\delta^x)$) are isomorphic to $\Ext^i_A(M,M)$
(resp. $\Ext^i_B(M,M)$). In general, $L_\bd$ is just the formal
tangent space at the point $\bigoplus S_i\otimes V_i$. However, in
our situation because of the boundness of $\mu_k$ (Theorem
\ref{boundness of mu_k}), the Maurer-Cartan equation actually
converges. Therefore the moduli space can be constructed globally as
mentioned in the previous Lemma.

\paragraph{Proof of Theorem \ref{moduli of sheaves}}
\begin{proof}
Given Lemma \ref{MC-relations}, it suffices to show the existence of
an open immersion from $\mathfrak{M}_\gamma$ to
$[MC(L_{\omega,\bd})/G_\bd]$.

First we need to construct a monomorphism of stacks. Let's pick an
ample line bundle $L$ on $X$. If $T$ is a tilting bundle on $X$ then
$T\otimes L^{-N}$ is again a tilting bundle for any integer $N$.
Therefore the functor $\RHom(\pi^*(T\otimes L^{-N}),-)$ induces an
equivalence from $\D^b(Y)$ to $\D^b(mod-B)$. Because $T$ is direct
sum of line bundles, we can choose $N\gg 0$ such that for any sheaf
$\E\in\mathfrak{M}_\gamma$, $\RHom(\pi^*(T\otimes L^{-N}),\E)$ is
concentrated on degree zero, i.e. is a module over $B$. Let $\bd$ be
its dimension vector, which depends on both $\gamma$ and $N$. Then
we obtain a morphism between stacks. Because of the derived Morita
equivalence, this is clearly an injection.

Next we need to argue this morphism is $\acute{e}$tale. Let $A'\to
A\to \CC$ be a small extension of pointed $\CC$-algebras. Let $T =
Spec A$ and $T' = Spec A'$. Consider a $2$-commutative diagram
\[
\xymatrix{ T\ar[r]\ar@{^(->}[d] &
\mathfrak{M}_\gamma\ar@{^(->}[d]^{\RHom(\pi^*(T\otimes
L^{-N}),-)} \\
T'\ar[r]\ar@{-->}[ru]&[MC(L_{\omega,\bd})/G_\bd]}
\]
of solid arrows. We have to prove that the dotted arrow exists,
uniquely, up to a unique $2$-isomorphism. This follows from standard
deformation theory. We need that $\RHom(\pi^*(T\otimes L^{-N}),-)$
induces a bijection on deformation spaces and an injection on
obstruction spaces (associated to the above diagram). They follow
immediately for the equivalence between $\D^b(Y)$ and $\D^b(mod-B)$.
In fact, all the obstruction groups are isomorphic.
\end{proof}

\paragraph{Proof of Theorem \ref{Critical set}}
\begin{proof}
As we have seen in Definition \ref{hamiltonian}, there is always a
formal function
\[
f_\bd(z) = \sum_{k=1}^\infty \frac{(-1)^{\frac{k(k+1)}{2}}}{(k+1)!}
\kappa( \bar{\mu}_k(z,\ldots,z), z)
\]
associated to the cyclic $L_\infty$ algebra $L_{\omega,\bd}$ where
$z\in L_{\omega,\bd}^1$. The critical set of $f_\bd$ coincides with
$MC(L_{\omega,\bd})$.

By the boundness theorem \ref{boundness of mu_k}, such formal
function is in fact a polynomial function of degree at most
$\chi(X)$. Therefore, $MC(L_{\omega,\bd})$ as a subvariety of
$L^1_{\omega,\bd}$ is the critical scheme of $f_\bd$. Since the
$G_\bd$ action is induced from the action of the Lie subalgebra
$L^0_{\omega,\bd}$, $f_\bd$ is clearly invariant under this action.

By Theorem \ref{moduli of sheaves}, $\mathfrak{M}_\gamma$ is an open
substack of $[MC(L_{\omega,\bd})/G_\bd]$ for appropriate choice of
$\bd$. The theorem follows since we can restrict the function
$f_\bd$.
\end{proof}

\begin{remark}\label{inner product}
Recall that by Theorem \ref{Yoneda cyclic},  $L^1_{\omega,\bd}$
decomposes into $L^1_\bd\oplus (L^2_\bd)^*$. The CS function $f_\bd$
has a nice property respect to this decomposition.

If we write the cyclic pairing $\kappa(x,y)$ as $tr(x\circ y)$, the
function $f_\bd$ can be written as the trace of cyclic invariant
polynomial of matrices. Theorem \ref{cyclic completion} tells us
that the variables in $(L^2_\bd)^*$ appear exactly once (of degree
one) in all the monomials. This means we can always write $f_\bd$ as
an inner product of a polynomials of elements in $L^1_\bd$ and
elements of $(L^2_\bd)^*$. This property plays a central role in
Section \ref{sec_motive reduction}.

\end{remark}

As a summary of Section \ref{sec_cylic} and \ref{sec_CS}, we give an
algorithm to compute CS functions on local toric Fano surfaces.
\begin{enumerate}
\item[{\bf{STEP} 1}] Choose a strong exceptional collection of line bundles on $X$. By
results in Section \ref{sec_morita}, this completely determines the
quiver $\cQ$ together with its relations.
\item[{\bf{STEP} 2}] Compute the $A_\infty$ structures on the Yoneda
algebra $R$ using the correspondence between $m_k$ and relations to
the $\cQ$.
\item[{\bf{STEP} 3}] Apply Theorem \ref{Yoneda cyclic} to compute
$\bar{m}_k$ of $R_\omega$.
\item[{\bf{STEP} 4}] Plug in specific dimension vector $\bd$, anti-commutatize $R_{\omega,\bd}$
to $L_{\omega,\bd}$ and apply Definition \ref{hamiltonian} to
compute $f_\bd$.
\end{enumerate}

\section{Examples of CS functions}\label{sec_examples}
In these section, we discuss some examples of CS functions.
\subsection{$\CC^3$}
The easiest example of a Calabi-Yau 3-fold is the three dimensional
affine space. Rigorously speaking, it is not a local surface but
still the CS function can be computed using the same philosophy.

Let $B$ be the polynomial algebra with three variables. The category
$Coh (\CC^3)$ equals $mod-B$. Consider the quiver $\cQ_\omega$.
\begin{equation}
\begin{xy} <1.5mm,0mm>:
  (10,-10)*{\bullet}="a"
  \ar@(u,l)|{x} "a";"a"
  \ar@(r,u)|{y} "a";"a"
  \ar@(ld,rd)|{z} "a";"a"
\end{xy}
\end{equation}
with relations $xy-yx,yz-zy,zx-xz$. Its path algebra is equal to
$B$.

Given a positive integer $n$, let $L_{\omega,n}$ be the Yoneda
algebra $\Ext^\bullet_{\CC^3}(\O_{\{0\}},\O_{\{0\}})\otimes
\frak{gl}_n$. Since the only non-vanishing product is $\bar{\mu}_2$,
$L_{\omega,n}$ is a graded Lie algebra. Let $A,B,C$ be $n$ by $n$
matrices associated to $x,y,z$. The CS function $f_n$ is equal to
$tr((AB-BA)C)$.

The Morita equivalence in this case is the classical Koszul duality
between symmetric and exterior algebras
\[
\D^b(coh(V))=D^b(mod-\wedge^\bullet (V)).
\]

The quiver $\cQ_\omega$ gives combinatoric description for both
$\CC^3$ and cotangent bundle of three dimensional torus. The first
is clear since the path algebra of $\cQ_\omega$ is the algebra of
functions on $\CC^3$. For the second, we can think of the quiver as
the 1-skeleton of $T^3$ and the relations as the the gluing
conditions of two cells.

The stack $[crit f_\bd/G_\bd]$ is related to two interesting moduli
spaces. The first one \footnote{One can modify the construction
slightly to include the Hilbert scheme of points (See \cite{BBS}.)}
is the moduli space of length $n$ sheaves on $\CC^3$ and the second
one \footnote{One need to put a stability condition to make it hold
rigorously.} is the moduli space of flat $GL_n$ vector bundles on
$T^3$. These two moduli spaces are related by Homological Mirror
Symmetry.

\subsection{$\omega_\PP^2$}
Following from calculations done in Example \ref{P2}, the CS
function of local projective plane is given by
\[
tr(C''(A'B-B'A)+A''(B'C-C'B)+B''(C'A-A'C))
\]
where $A,B,C,A',B',C',A'',B'',C''$ are matrices associated to arrows
$x,y,z,x',y',z',x'',y'',z''$.

\subsection{$\omega_{\PP(1:3:1)}$ and $\omega_{\PP(2:1:2)}$}
In this subsection, we will compute the CS functions of
$\omega_{\PP(1:3:1)}$ and $\omega_{\PP(2:1:2)}$. These two
Calabi-Yau 3-folds are K-equivalent. There is some interesting
symmetry between these two CS functions.

For simplicity, we denote $\PP(1:3:1)$ and $\PP(2:1:2)$ by $X_1$ and
$X_2$ respectively. The stacky fan $\Sigma_1$ of $X_1$ has rays:
$(0,1), (1,-1), (-1,-2)$. The stacky fan $\Sigma_2$ of $X_1$ has
rays: $(0,2), (1,0), (-1,-1)$. Denote their canonical bundles by
$Y_1$ and $Y_2$.

The Picard groups of $X_1$ and $X_2$ both equal to $\ZZ$. We denote
the positive generator by $\O(1)$. On $X_1$, $\O(1)$ can be written
as $\O(D_2)$ with $D_2$ being the toric invariant divisor for
$(1,-1)$. On $X_2$, $\O(1)$ can be written as $\O(D_1)$ with $D_1$
being the toric invariant divisor for $(0,2)$. For both $\D^b(X_1)$
and $\D^b(X_2)$, $\O,\O(1),\O(2),\O(3),\O(4)$ form a full strong
exceptional collection. The quivers associated to these two
collections are denoted by $\cQ_1$ and $\cQ_2$. The sets of vertices
$\{0,1,2,3,4\}$ correspond to $\O,\O(1),\O(2),\O(3),\O(4)$.

\begin{equation}{\cQ_1}
\begin{xy} <1.5mm,0mm>:
  (0,0)*{\circ}="a",(10,0)*{\circ}="b",(20,0)*{\circ}="c",(30,0)*{\circ}="d",(40,0)*{\circ}="e",
  (0,-4)*{v_0},(10,-4)*{v_1},(20,-4)*{v_2},(30,-4)*{v_3},(40,-4)*{v_4}
  \ar@{->}@/^2mm/|{1} "a";"b"
  \ar@{->}@/_2mm/|{x} "a";"b"
  \ar@{->}@/^2mm/|{1} "b";"c"
  \ar@{->}@/_2mm/|{x} "b";"c"
  \ar@{->}@/^2mm/|{1} "c";"d"
  \ar@{->}@/_2mm/|{x} "c";"d"
  \ar@{->}@/^2mm/|{1} "d";"e"
  \ar@{->}@/_2mm/|{x} "d";"e"
  \ar@{->}@/^9mm/|{xy} "a";"d"
  \ar@{->}@/_9mm/|{xy} "b";"e"
\end{xy}
\end{equation}

\begin{equation}{\cQ_2}
\begin{xy} <1.5mm,0mm>:
  (0,0)*{\circ}="a",(10,0)*{\circ}="b",(20,0)*{\circ}="c",(30,0)*{\circ}="d",(40,0)*{\circ}="e",
  (0,-4)*{v_0},(10,-4)*{v_1},(20,-4)*{v_2},(30,-4)*{v_3},(40,-4)*{v_4}
  \ar@{->}|{1} "a";"b"
  \ar@{->}|{1} "b";"c"
  \ar@{->}|{1} "c";"d"
  \ar@{->}|{1} "d";"e"
  \ar@{->}@/^4mm/|{x} "a";"c"
  \ar@{->}@/^4mm/|{x} "c";"e"
  \ar@{->}@/_4mm/|{x} "b";"d"
  \ar@{->}@/^9mm/|{y} "a";"c"
  \ar@{->}@/^9mm/|{y} "c";"e"
  \ar@{->}@/_9mm/|{y} "b";"d"
\end{xy}
\end{equation}
Notice that $\Sigma_1$ and $\Sigma_2$ are related by shift of
origin. This shift changes the stack completely. But surprisingly,
the full strong exceptional collections on $X_{\Sigma_1}$ and
$X_{\Sigma_2}$ are related (\cite{BorHua}).

Following from the algorithm in the end of last section, the CS
function for $\omega_{\PP(1:3:1)}$ is
\begin{equation}
\begin{split}
f=tr(R_{20}(B_{12}A_{01}-A_{12}B_{01})+R_{31}(B_{23}A_{12}-A_{23}B_{12})\\
+R_{42}(B_{34}A_{23}-A_{34}B_{23})+R_{40}(A_{34}C_{03}-C_{14}A_{01})\\
+S_{40}(B_{34}C_{03}-C_{14}B_{01})).
\end{split}
\end{equation}

The CS function for $\omega_{\PP(2:1:2)}$ is
\begin{equation}
\begin{split}
f=tr(R_{30}(A_{23}B_{02}-B_{13}A_{01})+R_{41}(B_{24}A_{12}-A_{34}B_{13})\\
+S_{30}(A_{23}C_{02}-C_{13}A_{01})+S_{41}(C_{24}A_{12}-A_{34}C_{13})\\
+R_{40}(B_{24}C_{02}-C_{24}B_{02})).
\end{split}
\end{equation}

\subsection{$\PP^2$ blow up at one point}\label{dP1}
The first example which involve $\mu_k$ terms with $k>2$ is the
local DelPezzo surface of degree one. It is first computed by
Aspinwall and Fidkowski in \cite{Aspinwall}.

Let $X$ be the blow up of $\PP^2$ at one point. Denote the pull back
of hyperplane by $H$ and the exceptional divisor by $E$. The derived
category $\D^b(X)$ has a strong exceptional collection
$\O,\O(H),\O(2H-E),\O(2H)$. The corresponding quiver is:

\begin{equation}{\cQ_3}
\begin{xy} <1.5mm,0mm>:
  (0,0)*{\circ}="a",(15,0)*{\circ}="b",(30,0)*{\circ}="c",(45,0)*{\circ}="d",
  (0,-8)*{\O},(15,-8)*{\O(H)},(30,-8)*{\O(2H-E)},(45,-8)*{\O(2H)}
  \ar@{->}@/^2mm/|{d_0} "a";"b"
  \ar@{->}@/^0mm/|{d_1} "a";"b"
  \ar@{->}@/_2mm/|{d_2} "a";"b"
  \ar@{->}@/^1mm/|{b_0} "b";"c"
  \ar@{->}@/_1mm/|{b_1} "b";"c"
  \ar@{->}@/^0mm/|{a} "c";"d"
  \ar@{->}@/^5mm/|{c} "b";"d"
  \ar@{.>}@/_10mm/|{s_0} "d";"a"
  \ar@{.>}@/_8mm/|{s_1} "d";"a"
  \ar@{.>}@/^5mm/|{r_0} "c";"a"
\end{xy}
\end{equation}
The graded piece $L^2$ of the Yoneda algebra has dimension three. We
denote the basis by $r_0, s_0, s_1$.  If we denote the matrices
associated to each arrow by Capital letters, then the CS function is
given by
\[
f=tr(R_0(B_0D_1-B_1D_0) + S_0(AB_0D_2-CD_0) + S_1(AB_1D_2-CD_1)).
\]

\section{Integrality of Generalized DT invariants}\label{sec_Integrality}
In this section, we give the first geometric application of CS
functions. The main result is Theorem \ref{smoothness}, where we
show that the $L_\infty$ products vanish at semistable points of the
moduli space of sheaves of local surfaces. As a consequence, the
generalized Donaldson-Thomas invariants defined by Joyce and Song
\cite{JS} are integral on local surfaces.

We only consider sheaves on $Y$ that belong to the category
$\D_\omega$, i.e. set theoretically support on $X$. Furthermore, we
assume they are support on dimension bigger than zero. The
integrality of the zero dimensional sheaves has been proved in
section 6.3 in \cite{JS}.

Let $L$ be an ample line bundle on $X$. The Hilbert polynomial of a
coherent sheaf $\E$ on $Y$ is defined to be $\chi(\E\otimes
\pi^*{L^k})$ for $k\gg0$. The slope of $\E$, denoted by $\mu(\E)$ is
defined to be the quotient of the second nonzero coefficient of its
Hilbert polynomial by the first. We will adopt the notation of Joyce
and Song \cite{JS}. A sheaf $\E$ is called $\tau$-stable
(semi-stable) if for any proper subsheaf $\F$, $\mu(\F)<\mu(\E)$
($\leq$). The moduli space of $\tau$ semi-stable sheaves on $Y$ with
class $\gamma\in \Gamma$ is denoted by
$\mathfrak{M}^\tau(Y,\gamma)$.

\begin{lemma}\label{polystable is pushforward}
If $\E$ is a $\tau$-stable sheaf on $Y$, then $\E$ is supported on
$X$ scheme theoretically.
\end{lemma}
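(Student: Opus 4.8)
The plan is to use the fact that $Y$ is the total space of the canonical bundle $\omega_X$ over $X$, so there is a $\CC^*$-action on $Y$ scaling the fibers, fixing the zero section $X$. Since $\E$ belongs to $\D_\omega$ it is set-theoretically supported on $X$, so scheme-theoretically it is supported on some nilpotent thickening of $X$ inside $Y$; concretely, writing $\mathcal{I}$ for the ideal sheaf of the zero section, there is a smallest $m \geq 1$ with $\mathcal{I}^m \cdot \E = 0$. The claim is that $m = 1$, i.e. $\mathcal{I} \cdot \E = 0$, which is exactly saying $\E = \iota_* \iota^* \E$ is a pushforward from $X$. First I would suppose for contradiction that $m \geq 2$ and consider the subsheaf $\F := \mathcal{I}^{m-1} \cdot \E \subseteq \E$. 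This is a nonzero proper subsheaf (nonzero by minimality of $m$, proper because $\mathcal{I} \cdot \E \neq \E$ as $\E$ is supported on $X$ and Nakayama), so $\tau$-stability forces $\mu(\F) < \mu(\E)$.

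Next I would analyze the slope of $\F$ relative to $\E$. Because $\mathcal{I} \cong \pi^*\omega_X^{-1}$ on $Y$ restricted to a neighborhood of $X$ (more precisely $\mathcal{I}/\mathcal{I}^2 \cong \omega_X^{-1}$ as a sheaf on $X$, and the associated graded pieces of the $\mathcal{I}$-adic filtration on $\E$ are $\iota^*$-pushforwards of sheaves on $X$ twisted by powers of $\omega_X^{-1}$), tensoring by powers of $\mathcal{I}$ twists the support data by $\omega_X^{-1}$. The key numerical point is that on a toric \emph{Fano} surface $X$, $\omega_X^{-1}$ is ample, so twisting a pure sheaf on $X$ by $\omega_X^{-1}$ strictly increases its slope (with respect to the polarization $\pi^*L$ used to define $\mu$ on $Y$). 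I would make this precise by comparing Hilbert polynomials: $\chi(\F \otimes \pi^* L^k)$ versus $\chi(\E \otimes \pi^* L^k)$, using the finite filtration of $\E$ by $\mathcal{I}^j \E$ and the fact that each graded piece is supported on $X$. This should show $\mu(\F) \geq \mu(\E)$, contradicting $\mu(\F) < \mu(\E)$ — unless in fact $\F = 0$, i.e. $m = 1$.

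I expect the main obstacle to be handling the case where $\E$ is \emph{not} pure on its support, or more subtly, keeping track of which coefficient of the Hilbert polynomial controls the slope when the associated graded pieces of the $\mathcal{I}$-adic filtration may have different dimensions of support. One has to be careful that "second nonzero coefficient over first" behaves well under the filtration: the leading term of $\chi(\E \otimes \pi^*L^k)$ is a sum of the leading terms of the graded pieces, and twisting by the ample $\omega_X^{-1}$ shifts the subleading term in the favorable direction on each piece of top dimension. A clean way to organize this is: restrict attention to the maximal-dimensional part of the support (lower-dimensional summands do not affect $\mu$), reduce to $\E$ pure of dimension $e = \dim \mathrm{Supp}(\E)$ on $X$, and then the filtration argument together with ampleness of $\omega_X^{-1}$ on $X$ gives that $\mathcal{I}^{m-1}\E$, being a twist-down in the sense above of a quotient-type piece, would have slope $\geq \mu(\E)$, forcing $m=1$. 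An alternative, perhaps cleaner route is to use that any $\tau$-semistable sheaf admits a Jordan–Hölder filtration with stable factors of the same slope, reduce to $\E$ stable, and then observe that $\mathcal{I}\cdot\E \subsetneq \E$ is impossible for a stable sheaf unless it is zero, because $\mathcal{I}\cdot\E$ would be a proper subsheaf whose quotient $\iota^*\E$ has strictly larger slope by Fano-ness, contradicting stability applied to the subsheaf $\mathcal{I}\cdot\E$. I would present this second argument as the main line, falling back on the explicit Hilbert-polynomial computation only to justify the slope comparison.
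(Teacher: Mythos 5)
Your strategy is genuinely different from the paper's, and at its central step it has a gap. The paper argues via simplicity: a $\tau$-stable sheaf satisfies $\dim_\CC\Hom_Y(\E,\E)=1$, whereas a sheaf whose scheme-theoretic support $Z$ is a nontrivial thickening in the fibre direction admits extra endomorphisms coming from $H^0(Z,\cO_Z)$; these exist precisely because the conormal bundle $\cI/\cI^2\cong\omega_X^{-1}$ of $X$ in $Y$ is ample and hence has nonzero sections on the positive-dimensional support. Your proposal instead tries to exhibit $\cI^{m-1}\E$ (or $\cI\E$) as a destabilizing subsheaf via a slope computation.

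The problem is that the multiplication maps $(\cI/\cI^2)\otimes(\cI^{j}\E/\cI^{j+1}\E)\to\cI^{j+1}\E/\cI^{j+2}\E$ are surjective but in general not injective, so $\cI^{m-1}\E$ is only a \emph{quotient} of $\iota^*\E\otimes\omega_X^{-(m-1)}$. Ampleness of $\omega_X^{-1}$ does raise the slope of the twist, but a quotient whose kernel has full-dimensional support can have arbitrarily smaller slope than the sheaf it is a quotient of, so you obtain no lower bound on $\mu(\cI^{m-1}\E)$ and hence no contradiction with the inequality $\mu(\cI^{m-1}\E)<\mu(\E)$ forced by stability. (The argument does close when $\E$ is flat over the thickening, e.g.\ locally free on its support, but not in general.) Your ``cleaner'' second formulation also has the inequality pointing the wrong way: stability asserts that a proper quotient of the same dimension of support has \emph{larger} slope, so observing that $\iota^*\E=\E/\cI\E$ has slope greater than $\mu(\E)$ is exactly what stability predicts and contradicts nothing; what you would need is $\mu(\cI\E)\geq\mu(\E)$, and that is precisely the estimate the quotient structure does not give you. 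I would switch to the endomorphism/simplicity argument, where ampleness of $\omega_X^{-1}$ enters through the existence of global sections acting as nilpotent endomorphisms of $\E$.
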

\begin{proof}
First, stability implies that $\E$ must be pure. Second, the support
of $\E$ must be reduced. Suppose this is not the case. Call the
scheme theoretical support of $\E$ by $Z$. Then
$\Hom_X(\cO_Z,\cO_Z)\subset\Hom_X(\E,\E)$. Because $Z$ is
non-reduced, $\Hom_X(\cO_Z,\cO_Z)$ has $\CC$-dimension strictly
bigger than one. On the other hand, since X has negative normal
bundle inside $Y$, every element in $\Hom_X(\E,\E)$ extends to
$\Hom_Y(\E,\E)$. Therefore $dim_\CC \Hom_Y(\E,\E)>1$, which violates
the stability of $\E$.
\end{proof}

\begin{lemma}\label{polystable no Ext2}
Let $\E_1$ and $\E_2$ be two $\tau$ semi-stable sheaves on $X$ such
that $\mu(\E_1)=\mu(E_2)$. Then $\Ext^2(\E_1,\E_2)=0$.
\end{lemma}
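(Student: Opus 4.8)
The plan is to use Serre duality on $X$ together with the slope inequality. Since $X$ is a smooth surface (a toric Fano stack of dimension two), Serre duality gives $\Ext^2_X(\E_1,\E_2) \simeq \Hom_X(\E_2, \E_1 \otimes \omega_X)^*$, where $\omega_X$ is the canonical sheaf. So it suffices to prove that $\Hom_X(\E_2, \E_1 \otimes \omega_X) = 0$. The key point is that $X$ being Fano means $\omega_X$ is the inverse of an ample line bundle, so tensoring by $\omega_X$ strictly decreases the slope: $\mu(\E_1 \otimes \omega_X) < \mu(\E_1) = \mu(\E_2)$. (One should be slightly careful here: for sheaves supported in dimension $< \dim X$ the effect of twisting by $\omega_X$ on the slope has to be checked, but since $\omega_X^{-1}$ is ample its restriction to any positive-dimensional subscheme is still ample, so twisting by $\omega_X$ still strictly decreases the leading-order-normalized second coefficient; for the zero-dimensional case the statement is not needed here by the standing assumptions of the section, though in fact $\Ext^2$ of a zero-dimensional sheaf with anything is also easy.)

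First I would record the Serre duality isomorphism and reduce to showing $\Hom_X(\E_2, \E_1\otimes\omega_X)=0$. Next I would invoke semistability of both $\E_1$ and $\E_2$ with respect to the same $\tau$ (so $\mu(\E_1)=\mu(\E_2)$): any nonzero morphism $\phi: \E_2 \to \E_1\otimes\omega_X$ has image $\mathcal{I}$ which is both a quotient of the semistable sheaf $\E_2$ and a subsheaf of $\E_1\otimes\omega_X$; semistability of $\E_2$ forces $\mu(\mathcal{I}) \geq \mu(\E_2)$, while semistability of $\E_1\otimes\omega_X$ (which is $\E_1$ twisted by a line bundle, hence still semistable with slope $\mu(\E_1)+\deg$-shift) forces $\mu(\mathcal{I}) \leq \mu(\E_1\otimes\omega_X)$. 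Combining with $\mu(\E_1\otimes\omega_X) < \mu(\E_1) = \mu(\E_2)$ gives $\mu(\mathcal{I}) < \mu(\mathcal{I})$, a contradiction. Hence $\phi = 0$.

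The main obstacle is the bookkeeping around the slope function $\mu$ as defined in this paper — it is the ratio of the second-to-leading and leading coefficients of the Hilbert polynomial computed with respect to $\pi^*L$ — and checking that twisting by $\omega_X$ genuinely strictly decreases it for sheaves of arbitrary (positive) support dimension on $X$. This amounts to the elementary fact that for a pure sheaf $\mathcal{F}$ of support dimension $m$ on $X$, one has $\mu(\mathcal{F}\otimes\omega_X) = \mu(\mathcal{F}) - \tfrac{m+1}{m}\,c_1(\omega_X^{-1})\cdot[\mathrm{supp}\,\mathcal{F}]/(\text{leading coeff}) < \mu(\mathcal{F})$ because $\omega_X^{-1}$ is ample; I would state this as a short computation with Hilbert polynomials rather than grind through it. Everything else — Serre duality, the semistability seesaw — is standard. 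I should also note that the lemma is stated for sheaves on $X$, so no appeal to $Y$ or to $\D_\omega$ is needed; this is a purely surface-level statement, used subsequently (via Lemma \ref{polystable is pushforward} and the $\Ext$ computations of Theorem \ref{Yoneda cyclic}) to control the obstruction spaces on $Y$.
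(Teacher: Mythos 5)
Your proposal is correct and follows the same route as the paper: Serre duality to reduce to $\Hom_X(\E_2,\E_1\otimes\omega_X)$, the strict slope decrease from twisting by $\omega_X$ (using that $\omega_X^{-1}$ is ample and the support has positive dimension), and the standard semistability seesaw. You simply spell out the seesaw and the slope bookkeeping that the paper compresses into ``vanishes by stability.''
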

\begin{proof}
By Serre duality, $\Ext^2_X(\E_1,\E_2)=\Hom_X(\E_2,\E_1\otimes
\omega_X)^*$. Because $\omega_X^{-1}$ is ample and $\E_1$, $\E_2$
have dimension bigger than zero, $\mu(\E_1\otimes
\omega_X)<\mu(\E_1)=\mu(\E_2)$. Hence $\Ext^2(\E_1,\E_2)$ vanishes
by stability.
\end{proof}

Lemma \ref{polystable is pushforward} doesn't hold for semistable
sheaves. For example, if we take a proper but non-reduced curve in
$Y$. Its structure sheaf can be semi-stable but not stable.

\begin{lemma}\label{pullback of semistable}
Let $\E$ be a $\tau$ semi-stable sheaf on $Y$. Then the restriction
$\E|_X$ is a semi-stable sheaf on $X$.
\end{lemma}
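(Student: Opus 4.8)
The plan is to prove the stronger statement that $\E$ is scheme-theoretically supported on $X$, that is, $\E\cong\iota_*\F$ with $\F\cong\iota^*\E=\E|_X$, and then to pass semistability from $\E$ down to $\F$ by a formal argument. As preliminaries I would record three easy facts about $\iota_*$: it carries (reduced) Hilbert polynomials computed with $L$ on $X$ to those computed with $\pi^*L$ on $Y$ (projection formula, together with $\pi\circ\iota=\mathrm{id}_X$), hence preserves slopes; every subsheaf of a pushforward $\iota_*\F$ is annihilated by $I_X$ and is therefore again of the form $\iota_*\mathcal{S}$ with $\mathcal{S}\subseteq\F$; and $\iota^*\iota_*\F\cong\F$. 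Granting $\E\cong\iota_*\F$, a destabilizing subsheaf of $\F$ on $X$ would push forward to a destabilizing subsheaf of $\E=\iota_*\F$ on $Y$, so $\tau$-semistability of $\E$ forces $\tau$-semistability of $\F=\E|_X$. That is the concluding step.

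The substance of the proof is the isomorphism $\E\cong\iota_*\F$. I would run a Jordan--H\"older filtration $0=\E_0\subset\E_1\subset\cdots\subset\E_\ell=\E$ of $\E$ on $Y$, with $\tau$-stable subquotients all of slope $\mu:=\mu_Y(\E)$ and of positive dimension (as $\E$ is). Each subquotient $\E_i/\E_{i-1}$ lies in $\D_\omega$, hence by Lemma \ref{polystable is pushforward} is scheme-theoretically supported on $X$, say $\E_i/\E_{i-1}\cong\iota_*G_i$; by the preliminaries, $G_i$ is $\tau$-stable on $X$ of slope $\mu$. Now I would induct on $\ell$ to prove $\E\cong\iota_*\F$ with $\F$ an iterated extension of the $G_i$ on $X$, in particular $\tau$-semistable of slope $\mu$ and of positive dimension; the case $\ell=1$ is immediate. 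For the inductive step, write the extension $0\to\E_{\ell-1}\to\E\to\iota_*G_\ell\to 0$, invoke the inductive hypothesis to get $\E_{\ell-1}\cong\iota_*\F'$ with $\F'$ $\tau$-semistable of slope $\mu$, and analyze the extension class in $\Ext^1_Y(\iota_*G_\ell,\iota_*\F')$.

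That analysis is the two-object analogue of the computation in the proof of Theorem \ref{Yoneda cyclic}: it yields $\Ext^1_Y(\iota_*G_\ell,\iota_*\F')\cong\Ext^1_X(G_\ell,\F')\oplus\Ext^2_X(\F',G_\ell)^*$, and, as in that computation, the first summand is exactly the image of the pushforward map $\iota_*$ (concretely, under the adjunction isomorphism $\Ext^1_Y(\iota_*G_\ell,\iota_*\F')\cong\Ext^1_X(\mathbf{L}\iota^*\iota_*G_\ell,\F')$ the map $\iota_*$ is precomposition with the counit $\mathbf{L}\iota^*\iota_*G_\ell\to G_\ell$, which is the projection onto the degree-zero summand $G_\ell$ of $\mathbf{L}\iota^*\iota_*G_\ell\cong G_\ell\oplus G_\ell\otimes\omega_X^{-1}[1]$). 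Since $\F'$ and $G_\ell$ are $\tau$-semistable on $X$ of the same slope $\mu$ and of positive dimension, Lemma \ref{polystable no Ext2} gives $\Ext^2_X(\F',G_\ell)=0$, so the extension class comes from an extension $0\to\F'\to\F\to G_\ell\to 0$ on $X$; pushing that forward identifies $\E$ with $\iota_*\F$, completing the induction. The main obstacle is exactly this upgrade from set-theoretic to scheme-theoretic support on $X$: it is where the two preceding lemmas enter, it genuinely uses both $\E\in\D_\omega$ and the Fano condition (through Lemma \ref{polystable no Ext2}), and it is false for general $\tau$-semistable sheaves on $Y$. Everything else is bookkeeping with Hilbert polynomials and pushforwards.
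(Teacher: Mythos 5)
Your route is genuinely different from the paper's and more ambitious. The paper's proof is a short slope argument: it takes the Jordan--H\"older filtration of $\E$, observes (via Lemma \ref{polystable is pushforward}) that the stable factors live on $X$, and then argues directly that $\E|_X$ sits inside $\E$ with the same slope, so any destabilizer of $\E|_X$ destabilizes $\E$; it never claims $\E\cong\iota_*\F$. You instead prove that stronger statement by induction along the filtration, using the splitting $\Ext^1_Y(\iota_*G,\iota_*\F')\cong\Ext^1_X(G,\F')\oplus\Ext^2_X(\F',G)^*$ from the proof of Theorem \ref{Yoneda cyclic} together with Lemma \ref{polystable no Ext2} to kill the obstruction to the extension descending to $X$. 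The Ext computation, the identification of the first summand with the image of $\iota_*$, and the concluding bookkeeping are all correct. What you buy is a cleaner logical structure; what it costs is a direct collision with the paper itself: the remark immediately preceding this lemma asserts that Lemma \ref{polystable is pushforward} \emph{fails} for semistable sheaves, and the proof of Theorem \ref{smoothness} is organized around the possibility that the fibre-direction component $\epsilon$ of a semistable Maurer--Cartan element is nonzero, which your claim would rule out. The one place your argument can break, and the place you should examine, is the input to the induction: you need the Jordan--H\"older factors to be honestly $\tau$-stable sheaves \emph{on $Y$} of the same slope, so that Lemma \ref{polystable is pushforward} (whose proof uses simplicity) applies to each of them. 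For sheaves of one-dimensional support the paper's slope is equivalent to the full reduced Hilbert polynomial, such filtrations exist, and your argument does go through (consistently with this, the ``non-reduced curve'' offered as a counterexample in the paper is in fact destabilized by its fibre-direction ideal $\omega_X^{-1}|_C$, which has strictly larger slope, so it is not semistable). For two-dimensional sheaves the paper's $\mu$ only sees the first two coefficients of the Hilbert polynomial, and a $\mu$-semistable sheaf need not admit a filtration whose factors are honestly $\mu$-stable of the same slope --- they are stable only up to modifications in dimension zero --- so the base of your induction is not available there. Either restrict the strengthened claim to the one-dimensional case, or, for the lemma as stated, fall back on the paper's softer argument, which only needs the graded pieces to be semistable.
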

\begin{proof}
Because $\E$ is set theoretically supported on $X$, it can be
written as consequent extensions of stable sheaves on $X$ with the
same slope (Jordan-Holder filtration). Furthermore, the natural
morphism $\E|_X\to\E$ is always an injection of sheaves. Since
$\mu(\E|_X)=\mu(\E)$, any subsheaf that destabilizes $\E|_X$ will
destabilize $\E$ as well.
\end{proof}

\begin{theorem}\label{smoothness}
The $L_\infty$ products $\bar{\mu}_k$ of $L_\omega$ vanish at
semi-stable points.
\end{theorem}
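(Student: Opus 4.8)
The plan is to transport the entire question down to the Fano surface $X$, where $\tau$-semistability makes the relevant $\Ext^2$ vanish, and then to read off the conclusion from the cyclic completion description of $L_\omega$ (Theorem \ref{Yoneda cyclic}).

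First I would prove that a $\tau$-semistable sheaf $\E$ on $Y$ with positive dimensional support is in fact scheme-theoretically supported on $X$, i.e. $\E\cong\iota_*F$ for a $\tau$-semistable sheaf $F$ on $X$ (strengthening Lemma \ref{pullback of semistable}). By Lemma \ref{polystable is pushforward} this holds for $\tau$-stable $\E$. For general $\E$ I would take a Jordan-Holder filtration: its successive quotients are $\tau$-stable of slope $\mu(\E)$ and are still set-theoretically supported on $X$, hence each is a pushforward $\iota_*(-)$. Now build $\E$ up by successive extensions $0\to\iota_*F'\to\E''\to\iota_*F''\to0$; the extension class lies in $\Ext^1_Y(\iota_*F'',\iota_*F')$, which by the decomposition established in the proof of Theorem \ref{Yoneda cyclic} equals $\Ext^1_X(F'',F')\oplus\Ext^2_X(F'',F')^*$, and the second summand vanishes by Lemma \ref{polystable no Ext2} (both factors are $\tau$-semistable of the same slope). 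So every extension class is ``horizontal'' and $\E''$ is again a pushforward from $X$; inducting on the length of the filtration gives the claim. In particular the Maurer-Cartan element $x\in L^1_{\omega,\bd}$ representing $\E$ lies in the subspace $L^1_\bd$.

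Next I would compute the twisted cohomology. The cohomology of $(L_{\omega,\bd},\delta^x)$ is $\Ext^\bullet_Y(\E,\E)$, so by the formulas in the proof of Theorem \ref{Yoneda cyclic} together with $\Ext^2_X(F,F)=0$ (Lemma \ref{polystable no Ext2} with $\E_1=\E_2=F$) one gets $\Ext^1_Y(\E,\E)=\Ext^1_X(F,F)$ and $\Ext^2_Y(\E,\E)=\Ext^1_X(F,F)^*$: the whole of $H^1$ sits in the ``$L$-part'' of the cyclic completion and the whole of $H^2$ in the ``$L^*$-part''. Since $x\in L^1_\bd$ and the cyclic completion construction only uses the products $\mu_k$ of $L$, twisting by a Maurer-Cartan element commutes with cyclic completion (cf. the functoriality in Remark \ref{geometry_cyclic}), and so does homotopy transfer; hence the minimal model of $(L_{\omega,\bd},\delta^x)$ is the cyclic completion of $\Ext^\bullet_X(F,F)$ with its transferred products $\mu^F_k$. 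For $k\ge2$ the product $\bar\mu_k^x$ evaluated on elements of $\Ext^1_Y(\E,\E)=\Ext^1_X(F,F)$ has all of its inputs in the $L$-part, so by rule (2) of Definition \ref{cyclic completion} it is computed by $\mu^F_k\colon\wedge^k\Ext^1_X(F,F)\to\Ext^2_X(F,F)=0$ and therefore vanishes. Equivalently, in the inner-product presentation $f_\bd(a,c)=\kappa(c,\Phi(a))$ of Remark \ref{inner product}: at the point representing $\E$ the linearization $D\Phi=\delta^x\colon L^1_\bd\to L^2_\bd$ is surjective, since its cokernel is $\Ext^2_X(F,F)=0$, so $\Phi$ is a formal submersion there; hence $f_\bd$ becomes a nondegenerate quadratic form after a formal change of coordinates on a slice transverse to the $G_\bd$-orbit, which is exactly the vanishing of the higher twisted $L_\infty$ products.

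The two points that need care are the reduction in the first step --- that $\tau$-semistable sheaves on $Y$ pick up no vertical component, which is really a consequence of Lemma \ref{polystable no Ext2} combined with the $\Ext$-decomposition of Theorem \ref{Yoneda cyclic} --- and the compatibility in the last step of cyclic completion with Maurer-Cartan twisting and with homotopy transfer, which I would isolate as a short lemma (both statements follow from the fact, noted in Remark \ref{geometry_cyclic}, that $L\mapsto\bar L$ is functorial and only involves the data $\{\mu_k\}$). Everything else is bookkeeping with the Koszul signs of Definition \ref{cyclic completion}.
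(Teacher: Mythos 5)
Your proposal is correct and its endgame coincides with the paper's: once one knows that the degree-one twisted cohomology is $\Ext^1_X(F,F)$ sitting entirely in the $L$-part of the cyclic completion and that $\Ext^2_X(F,F)=0$ (Lemma \ref{polystable no Ext2} via Lemma \ref{pullback of semistable}), rule $(2)$ of Definition \ref{cyclic completion} forces the higher products to land in $\Ext^2_X(F,F)=0$. Where you genuinely diverge is the first step. The paper does \emph{not} assume the Maurer--Cartan element is horizontal: it writes $\bar x=(x,\epsilon)$, allows $\epsilon\neq0$, and runs the spectral sequence of the two-row double complex to identify $H^1$ and $H^2$ of $(L_\omega,\delta^{\bar x})$ with the kernel and cokernel of $\epsilon\colon H^1(L,\delta^x)\to H^1(L,\delta^x)^*$, using $H^2(L,\delta^x)=0$ only to kill the corner terms. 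You instead prove $\epsilon=0$ outright, by inducting along the Jordan--H\"older filtration and observing that the ``vertical'' summand of $\Ext^1_Y(\iota_*F'',\iota_*F')$ is, Serre-dually, $\Hom_X(F'',F'\otimes\omega_X)$, which vanishes between semistable sheaves of equal slope and positive-dimensional support. (Two small remarks: that dual summand is $\Ext^2_X(F',F'')^*$, with the arguments swapped relative to what you wrote, though Lemma \ref{polystable no Ext2} kills it either way; and a quicker route to the same conclusion is to note that the image of the nilpotent $\omega_X^{-1}$-action $\E\otimes\omega_X^{-1}\to\E$ would be a destabilizing subsheaf, since $\omega_X^{-1}$ is ample.) Your strengthening is noteworthy because the paper asserts, after Lemma \ref{polystable is pushforward}, that semistable sheaves need \emph{not} be scheme-theoretically supported on $X$; for support of dimension bigger than zero, which is the standing assumption of that section, your argument shows that they are, so the paper's $\epsilon\neq0$ branch is in fact vacuous there. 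What each approach buys: the paper's spectral-sequence case analysis is robust and would survive even if the support claim failed, while yours eliminates a case and makes the $E_1$-degeneration immediate. Both proofs lean equally on the compatibility of the cyclic-completion products with Maurer--Cartan twisting and homotopy transfer, which the paper also leaves implicit (``by similar argument of Theorem \ref{Yoneda cyclic}''); you are right to flag that as a lemma worth isolating.
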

\begin{proof}
Let $\E$ be a $\tau$ semistable sheaf on $Y$. It follows from
Theorem \ref{moduli of sheaves} that we can define a cyclic
$L_\infty$ algebra $L_\omega$ such that $\E$ is mapped to a
Maurer-Cartan element $\bar{x}$ and furthermore $\Ext^i_Y(\E,\E)$
coincide with $H^i(L_\omega,\delta^{\bar{x}})$. The $L_\infty$
products $\bar{\mu}_k$ uniquely defines $L_\infty$ products on
$H^\bullet(L_\omega,\delta^{\bar{x}})$ up to homotopy. We say
$\bar{\mu_k}$ vanish at $x$ if they vanish after passing to
$H^\bullet(L_\omega,\delta^{\bar{x}})$.

A MC element $\bar{x}$ of $L_\omega$ decomposes into $(x,\epsilon)$
respect to the decomposition $L^1_\omega=L^1\oplus (L^2)^*$. It
follows from Theorem \ref{Yoneda cyclic} that $x$ is a MC element of
$L$. The cohomology $H^\bullet(L_\omega,\delta^{\bar{x}})$ can be
computed as the cohomology of the total complex of
\[
\xymatrix{{L^2}^*\ar[r]&{L^1}^*\ar[r]&{L^0}^*\\
L^0\ar[r]\ar[u]&L^1\ar[r]\ar[u]&L^2\ar[u]}
\] where the horizontal differential is $\delta^x$ and the vertical
differential is induced by $\epsilon$.

If $\bar{x}$ is the image of a sheaf of the form $\iota_*\E$ for
some sheaf $\E$ on $X$ then $\bar{x}=(x,0)$. The associated spectral
sequence will degenerate at $E_1$ page.

If $\epsilon\neq 0$, then we need to pass to the $E_2$ page of:
\[
\xymatrix{{H^2(L,\delta^x)}^*\ar[r]^0&{H^1(L,\delta^x)}^*\ar[r]&{H^0(L,\delta^x)}^*\\
H^0(L,\delta^x)\ar[r]\ar[u]^\epsilon&H^1(L,\delta^x)\ar[r]\ar[u]^\epsilon&H^2(L,\delta^x)\ar[u]^\epsilon}
\]

Therefore, $H^1(L_\omega,\delta^{\bar{x}})$ and
$H^2(L_\omega,\delta^{\bar{x}})$ are equal to the kernel and
cokernel of the map $\xymatrix{H^1(L,\delta^x)\ar[r]^\epsilon
&{H^1(L, \delta^x)}^*}$. The MC element $(x,0)$ is exactly the one
corresponding to $\E|_X$. So
$H^i(L,\delta^x)=\Ext_X^i(\E|_X,\E|_X)$. Now by Lemma \ref{pullback
of semistable}, $H^2(L,\delta^x)$ vanishes.

By similar argument of Theorem \ref{Yoneda cyclic}, if the inputs of
$\bar{\mu}_k$ belongs to $H^1(L,\delta^x)$ then the output must be
inside $H^2(L,\delta^x)$. Therefore $\bar{\mu_k}$ must vanish.

\end{proof}
\begin{remark}
A corollary of Theorem \ref{smoothness} is the moduli space of
$\tau$ semi-stable sheaves on $Y$ is smooth as an Artin stack.
Because the images of $\bar{\mu}_k$ are nothing but obstructions to
smoothness of moduli space.
\end{remark}

We are not going to define Joyce's generalized DT invariants and
state the general form of integrality conjecture since it requires
too much work. The interested readers can refer to \cite{JS} for the
full story.

\begin{corollary}
The generalized Donaldson-Thomas invariants $\hat{DT}(\tau)$ for
$\tau$ semi-stable sheaves are integers on local surfaces.
\end{corollary}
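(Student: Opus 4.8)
\begin{proof*}
\emph{Sketch.} The plan is to deduce the statement from Theorem~\ref{smoothness} together with the integrality criterion of Joyce and Song in \cite{JS}; the single new geometric input is the vanishing of the $L_\infty$ products at semistable points, everything else being an application of their general machinery.

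First I would unwind the consequence of Theorem~\ref{smoothness}. Fix a class $\gamma\in\Gamma$ of a sheaf on $Y$ lying in $\D_\omega$ with support of positive dimension, and let $\E=\bigoplus_i \E_i\otimes U_i$ be a $\tau$-polystable sheaf of class $\gamma$, the $\E_i$ pairwise non-isomorphic and $\tau$-stable of the same slope, $U_i$ the multiplicity spaces. By Theorem~\ref{smoothness} the minimal $L_\infty$ structure on $\Ext^\bullet_Y(\E,\E)=H^\bullet(L_\omega,\delta^{\bar x})$ has all higher brackets $\bar\mu_k$ equal to zero, so the minimal model of the Chern--Simons function $f_\bd$ at $\E$ vanishes identically on $H^1=\Ext^1_Y(\E,\E)$. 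Hence $\mathfrak{M}^\tau(Y,\gamma)$ is unobstructed at $\E$, i.e.\ a smooth Artin stack whose \'etale-local model near $\E$ is the quotient $[\Ext^1_Y(\E,\E)/\mathrm{Aut}(\E)]$ of a vector space by the reductive group $\mathrm{Aut}(\E)=\prod_i GL(U_i)$ acting linearly. Lemmas~\ref{polystable is pushforward} and~\ref{pullback of semistable} guarantee that we stay inside $\D_\omega$, and boundedness makes every such stack of finite type, so the Behrend-weighted Euler characteristics below are finite despite the non-compactness of $Y$.

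Next I would feed this into the Joyce--Song framework. For a smooth stack the Behrend function is a locally constant sign, so the Behrend-weighted integrals over the Ringel--Hall algebra defining $\bar{DT}^\gamma(\tau)$ collapse, up to sign, to ordinary stacky Euler characteristics of the strata $[\Ext^1_Y(\E,\E)/\prod_i GL(U_i)]$. Substituting these into Joyce and Song's universal wall-crossing expression for $\bar{DT}^\gamma(\tau)$ in terms of counts of honestly $\tau$-stable sheaves places us exactly in the situation singled out in \cite{JS}: at every polystable point the local model is a linear representation of a product of general linear groups, and in that case their computation shows that the classes $\hat{DT}^\gamma(\tau)$ obtained by M\"obius inversion of the relation $\bar{DT}^\gamma(\tau)=\sum_{m\mid\gamma}\tfrac{1}{m^2}\hat{DT}^{\gamma/m}(\tau)$ are $\ZZ$-linear combinations of Euler characteristics of moduli schemes of $\tau$-stable sheaves, hence integers. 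The zero-dimensional case is already handled in Section~6.3 of \cite{JS}.

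I expect the main difficulty to lie entirely in the strictly semistable sheaves: one must verify that the contributions of the quotient strata $[\Ext^1/\prod_i GL(U_i)]$, once assembled through the wall-crossing formula and corrected by the multiple-cover factors $1/m^2$, recombine into an integer. This is precisely the delicate combinatorial step isolated by Joyce and Song; the role of Theorem~\ref{smoothness} is to certify that local surfaces fall within the class of examples where their argument runs, so that the remaining work is a careful appeal to \cite{JS} rather than a new computation.
\end{proof*}
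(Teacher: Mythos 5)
Your proposal is correct and follows essentially the same route as the paper: Theorem~\ref{smoothness} shows that at each semistable point the moduli space is \'etale-locally a linear quotient $[\Ext^1_Y(\E,\E)/\prod_i GL(U_i)]$, i.e.\ a moduli space of representations of a quiver without relations (the Ext-quiver of the polystable sheaf), after which integrality is exactly Theorem~7.28 of \cite{JS} together with \cite{Reineke}. Your write-up merely unpacks what that citation contains (the M\"obius inversion over the $1/m^2$ multiple-cover factors), which the paper leaves implicit.
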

\begin{proof}
The integrality has been proved for the DT invariants of a quiver
without relations. The proof can be found in Theorem 7.28 of
\cite{JS} or \cite{Reineke}. Theorem \ref{smoothness} essentially
says that at every semi-stable point the moduli space
$\mathfrak{M}^\tau(Y,\gamma)$ is locally isomorphic to a moduli
space of representations of a quiver without relations. Therefore,
the integrality follows from \cite{JS} and \cite{Reineke}.
\end{proof}

Pass to quivers is a powerful technique in DT theory. However, in
general this trick works well only when we can match the geometric
stability with appropriate stability on quivers.

In fact, Theorem \ref{moduli of sheaves} can be enhanced to the
following one.

\begin{theorem}
The functor $\RHom_Y(\pi^*(T\otimes L^{-N}),-)$ from
$\mathfrak{M}_\gamma$ to $[MC(L_{\omega,\bd})/G_\bd]$ maps
$\mathfrak{M}^\tau(Y,\gamma)$ to a GIT quotient
$MC(L_{\omega,\bd}){//}_\theta G_\bd$ for suitable choice of
character $\theta$.
\end{theorem}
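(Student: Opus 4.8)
The plan is to upgrade the étale map of Theorem~\ref{moduli of sheaves} to one that respects stability, so that the open immersion $\mathfrak{M}_\gamma\hookrightarrow[MC(L_{\omega,\bd})/G_\bd]$ restricts to an open immersion on the semistable loci, and then identify the target with a GIT quotient. First I would recall that the equivalence $\RHom_Y(\pi^*(T\otimes L^{-N}),-)\colon\D^b(Y)\xrightarrow{\sim}\D^b(mod\text{-}B)$ takes a sheaf $\E\in\mathfrak{M}_\gamma$ (for $N\gg0$) to a genuine $B$-module $M$ with dimension vector $\bd=\bd(\gamma,N)$, hence to a point of $MC(L_{\omega,\bd})$; the subsheaves of $\E$ that matter for $\tau$-stability correspond, again for $N\gg0$, to $B$-submodules of $M$ (this is the usual Gieseker-to-quiver comparison: a large enough twist kills the higher cohomology of all subobjects in the bounded family, so $\Hom$ and submodule lattices are preserved). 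Next I would translate the geometric slope inequality into a numerical inequality on dimension vectors: the Hilbert polynomial of $\E$ and of any subsheaf is a linear function of the dimension vector of the corresponding $B$-module, so the $\tau$-stability condition $\mu(\F)<\mu(\E)$ becomes a linear inequality $\theta(\bd')\lessgtr0$ on the dimension vectors $\bd'$ of proper submodules, for a character $\theta\in\Hom(\ZZ^{n+1},\ZZ)$ read off from the second-and-first coefficients of the Hilbert polynomial (evaluated on the classes $[\iota_*S_i]$).

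Then I would invoke King's description of $\theta$-stability for quiver representations (\cite{King}, which should be cited): a representation $M$ of $(\cQ_\omega,\cI)$ with $\theta(\bd)=0$ is $\theta$-(semi)stable iff $\theta(\bd')<0$ ($\leq 0$) for every proper nonzero subrepresentation, and the GIT quotient $MC(L_{\omega,\bd}){/\!/}_\theta G_\bd$ (with respect to the linearization of the $G_\bd$-action on $MC(L_{\omega,\bd})\subset L^1_{\omega,\bd}$ given by $\theta$) is exactly the moduli space of $\theta$-semistable representations. Matching this against the previous paragraph, a sheaf $\E$ is $\tau$-semistable iff its image $B$-module is $\theta$-semistable, so $\RHom_Y(\pi^*(T\otimes L^{-N}),-)$ sends $\mathfrak{M}^\tau(Y,\gamma)$ into $MC(L_{\omega,\bd}){/\!/}_\theta G_\bd$. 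Since Theorem~\ref{moduli of sheaves} already gives an open immersion $\mathfrak{M}_\gamma\hookrightarrow[MC(L_{\omega,\bd})/G_\bd]$, and openness and the étale/monomorphism property are preserved by restriction to an open substack, the induced map $\mathfrak{M}^\tau(Y,\gamma)\to MC(L_{\omega,\bd}){/\!/}_\theta G_\bd$ is again an open immersion (possibly an isomorphism onto a union of connected components, once one also checks no $\theta$-semistable representation of the right dimension vector lies outside the image, e.g.\ by a boundedness argument: every $\theta$-semistable module tilts back to a sheaf in the bounded family).

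The main obstacle I expect is the dictionary between geometric stability and the character $\theta$: one must check that a single $N$ can be chosen that works simultaneously for $\E$ and for \emph{all} its destabilizing subsheaves (a boundedness statement for the set of potentially destabilizing subsheaves in the family $\mathfrak{M}_\gamma$), and that after this choice the sign of $\theta$ on submodule dimension vectors genuinely reproduces the slope comparison rather than the full Hilbert-polynomial comparison — i.e.\ that for the relevant classes the leading-coefficient normalization is constant, so that Gieseker stability and slope stability coincide on the family and both match $\theta$-stability on the quiver side. Once boundedness pins down $N$ and the character $\theta$ is extracted from the Hilbert polynomial, the rest is a formal combination of King's theorem with the already-established étale open immersion.
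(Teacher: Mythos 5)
The paper offers no proof of this theorem at all --- it explicitly declines to prove it and remarks only that ``the idea of the proof is already contained in the work of King and \'Alvarez-C\'onsul'' --- and your proposal is precisely an outline of that route: extract a character $\theta$ from the (cross-multiplied) slope inequality via the linearity of Hilbert-polynomial coefficients in the dimension vector, invoke King's GIT characterization of $\theta$-semistable quiver representations, and control the submodule/subsheaf dictionary by a boundedness argument for destabilizing subobjects. So your approach is the intended one, and the obstacles you single out (a uniform $N$ for the whole family of destabilizers, and the matching of slope versus full Hilbert-polynomial comparison) are exactly the points that \'Alvarez-C\'onsul--King's functorial construction is designed to handle.
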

We won't give a proof to this theorem since it is not used anywhere
in the paper. The idea of the proof is already contained in the work
of King and ´Alvarez-C´onsul \cite{King}.

There are still quite a few open problems on this direction. For
instance, how to describe the geometric stability of objects on $Y$
with noncompact support in term of the quiver stability. The most
important examples are ideal sheaves of curves and stable pairs on
$Y$. In these examples, although the curves are contained in $X$ but
the sheaves, or complexes are not. A natural idea is to define
certain framing conditions on the quiver (We will discuss this in
the later section). However, it is not clear at all why the GIT
stability on framed quiver representations will coincide with the
one on ideal sheaves or stable pairs.

\section{A dimension reduction formula of Virtual motives}\label{sec_motive reduction}
In this section, we give the second application of CS functions. We
will prove a decomposition theorem of virtual motives of $f_\bd$,
which partially generalizes Section 3 of \cite{BBS}. If we could
identify the geometric stability with appropriate quiver stability
condition, then we would obtain a decomposition theorem of virtual
motives of Hilbert schemes, which generalizes the most interesting
part of $\cite{BBS}$. However, so far we have no idea how to deal
with geometric stability.

Let $\LL$ be the motive of affine line. Given a scheme $X$, we will
denote its motive by $[X]$.

Consider a smooth scheme $M$ with an action of algebraic group $G$
together with a $G$-invariant regular function $f: M\to \CC$. In
\cite{DL}, Denef and Loeser defines the motivic vanishing cycle
$[\phi_f]$ in suitable augmented Grothendieck ring of varieites
(called ring of motivic weights). Since our result is not going to
involve the precise definition of this ring, we refer to Section 1
of \cite{BBS} for the precise definitions of ring of motivic
weights.

\begin{definition}\label{virdeg}\cite{BBS}
In appropriate ring of motivic weights, we define the virtual motive
of degeneracy locus by
$$-\LL^{-\frac{dim\ M-dim\
G}{2}}\cdot \frac{[\phi_f]}{[G]}$$ denoted by $[crit^G(f)]_{vir}$.
\end{definition}

We will try to get some property of the virtual motive of the CS
function $f_\bd$. The following Lemma guarantees the main technical
result Proposition 1.11 of \cite{BBS} applies.

\begin{lemma}\label{C*}
Let $f_\bd: L_{\omega,\bd}^1\to \CC$ be the CS function constructed
in Section \ref{sec_CS}. There is a $\CC^*$ action on
$L_{\omega,\bd}^1$ that satisfies:
\begin{enumerate}
\item[(1)] For $\lambda\in \CC^*$, $f_\bd(\lambda \cdot z)=\lambda f_\bd(z)$.
\item[(2)] The limit  $\lim_{\lambda\to 0} \lambda \cdot z$ exists in $L_{\omega,\bd}^1$.
\end{enumerate}
\end{lemma}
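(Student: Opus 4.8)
The plan is to write down an explicit $\CC^*$-action and check the two properties, using the structural description of $f_\bd$ from Remark \ref{inner product}. Recall from Theorem \ref{Yoneda cyclic} that $L^1_{\omega,\bd}$ decomposes as $L^1_\bd\oplus (L^2_\bd)^*$, and from Remark \ref{inner product} that in every monomial of $f_\bd$ the coordinates coming from the $(L^2_\bd)^*$-factor appear exactly once, while the coordinates from the $L^1_\bd$-factor appear with total degree at least one (the degree of $f_\bd$ is at least $2$). So the natural candidate is: let $\CC^*$ act on the $L^1_\bd$-summand trivially and on the $(L^2_\bd)^*$-summand by scaling, i.e. $\lambda\cdot(x,\epsilon)=(x,\lambda\epsilon)$. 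Since each monomial is linear in $\epsilon$, this rescales $f_\bd$ by exactly $\lambda$, giving property (1).

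For property (2), the limit $\lim_{\lambda\to 0}\lambda\cdot(x,\epsilon)=(x,0)$ visibly exists in $L^1_{\omega,\bd}$, so this is immediate for the action above. The only thing to double-check is that the $\CC^*$-action is compatible with the $G_\bd$-action (so that it descends to the stack, as needed for Proposition 1.11 of \cite{BBS}): this holds because the $G_\bd$-action is by conjugation and respects the bigrading by source and target of arrows, and in particular preserves the direct-sum decomposition $L^1_{\omega,\bd}=L^1_\bd\oplus(L^2_\bd)^*$ (the $(L^2_\bd)^*$-part corresponds, under Theorem \ref{Yoneda cyclic}, to the "fiber" arrows $R,S,\ldots$ dual to the relations). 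Hence scaling one summand commutes with conjugation.

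The main (only) subtlety I anticipate is making sure the weight-one scaling really does hit \emph{every} monomial of $f_\bd$ with weight exactly $1$, i.e. that no monomial is independent of the $(L^2_\bd)^*$-variables and none is quadratic or higher in them. Both facts are exactly the content of Remark \ref{inner product} (which in turn rests on the proof of Theorem \ref{Yoneda cyclic}: a nonzero term of $\bar\mu_k$ involves at most one, and in the cyclic-pairing contraction $\kappa(\bar\mu_k(z,\ldots,z),z)$ with $z=(x,\epsilon)$ exactly one, factor from $(L^2_\bd)^*$). So once Remark \ref{inner product} is invoked, properties (1) and (2) follow formally, and the lemma is proved; there is no genuine obstacle beyond bookkeeping with the grading.
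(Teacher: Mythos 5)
Your proposal is correct and is essentially identical to the paper's proof: both use the decomposition $L^1_{\omega,\bd}=L^1_\bd\oplus(L^2_\bd)^*$ and the fact from Remark \ref{inner product} that $f_\bd=\sum_i a_i(y)w_i^*$ is linear in the $(L^2_\bd)^*$-coordinates, then define the $\CC^*$-action by scaling those coordinates, so that (1) and (2) follow immediately. Your additional check that the action commutes with the $G_\bd$-action is a reasonable extra observation but not part of the paper's (very terse) argument.
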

\begin{proof}
Let us choose coordinate $z=(y_1,\ldots,y_j,\ldots
w_1^*,\ldots,w_i^*,\ldots)$ on $L_{\omega,\bd}^1$ with respect to
the decomposition $L_{\omega,\bd}^1=L_\bd^1\oplus (L_\bd^2)^*$. As
we remarked in \ref{inner product}, $f_\bd$ equals
$\sum_{i=1}^{dim(L^2_\bd)} a_i(\ldots,y_j,\ldots) w^*_i$ where $a_i$
are polynomials in $y_j$. We define the $\CC^*$ action by scaling
$w^*_i$. The limits of the orbits of this one parameter subgroup is
$L^1_\bd$.
\end{proof}

\begin{theorem}\label{dimension reduction}
Take $X$, $Y$ and $L_\bd$, $L_{\omega,\bd}$ as before. We have the
following dimension reduction formula:
\[
[\phi_{f_\bd}]=-[(L^2_\bd)^*]\cdot[MC(L_\bd)].
\]
\end{theorem}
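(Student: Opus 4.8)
The plan is to deduce the formula from the dimension reduction result of \cite{BBS} (Proposition 1.11) using the special shape of $f_\bd$ recorded in Remark \ref{inner product}. Fix coordinates $z=(y,w^*)$ on $L^1_{\omega,\bd}=L^1_\bd\oplus(L^2_\bd)^*$ with $w^*=(w^*_1,\dots,w^*_m)$ linear coordinates on $(L^2_\bd)^*$, $m=\dim L^2_\bd$, as in Lemma \ref{C*}. By Remark \ref{inner product} the Chern--Simons function has the form $f_\bd(y,w^*)=\sum_{i=1}^m a_i(y)\,w^*_i$ with each $a_i\in\CC[L^1_\bd]$ a polynomial: that is, $f_\bd$ is linear along the $(L^2_\bd)^*$--directions over the smooth base $L^1_\bd$, and the $\CC^*$--action of Lemma \ref{C*} --- weight one on $f_\bd$, contracting $L^1_{\omega,\bd}$ onto $L^1_\bd$ --- is exactly the hypothesis needed to apply Proposition 1.11 of \cite{BBS}.

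The step with genuine content --- and, I expect, the main obstacle --- is identifying, as schemes, the degeneracy locus $Z:=\{a_1=\cdots=a_m=0\}\subseteq L^1_\bd$ with $MC(L_\bd)$, taken with its scheme structure as the zero scheme of the curvature map. I would prove this by showing that the map $P\colon L^1_\bd\to L^2_\bd$ with components $a_1,\dots,a_m$ is precisely the curvature map $y\mapsto\sum_{k\ge 1}\tfrac{1}{k!}\mu_k(y,\dots,y)$ of the $L_\infty$ algebra $L_\bd$; then $Z=P^{-1}(0)=MC(L_\bd)$ by definition. The cleanest route uses the dg-geometry of Section \ref{sec_Linfty}: $f_\bd$ is the Hamiltonian of the odd vector field $Q$ on the formal symplectic dg-manifold attached to $L_{\omega,\bd}$, and under the odd symplectic form $\kappa$ --- which pairs the summand $(L^2_\bd)^*\subseteq L^1_{\omega,\bd}$ with $L^2_\bd\subseteq L^2_{\omega,\bd}$ --- the derivative $\partial f_\bd/\partial w^*$ corresponds, up to an overall sign, to the $L^2_\bd$--component of $Q$. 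Since $Q$ encodes $\sum_k\tfrac{1}{k!}\bar{\mu}_k$, while $\bar{\mu}_k$ agrees with $\mu_k$ when all inputs lie in $L_\bd$ (rule (2) of Definition \ref{cyclic completion}) and any term with two or more inputs in $(L^2_\bd)^*$ vanishes (rule (3)), restricting $Q$ to $\{w^*=0\}$ returns exactly $\sum_k\tfrac{1}{k!}\mu_k(y,\dots,y)$. As $f_\bd$ is linear in $w^*$, $\partial f_\bd/\partial w^*$ is independent of $w^*$, so $P$ equals this curvature map everywhere, as claimed.

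Granting the identification $Z=MC(L_\bd)$, the conclusion is immediate: Proposition 1.11 of \cite{BBS}, applied to $f_\bd=\sum_{i=1}^m a_i(y)\,w^*_i$ viewed as a function linear in the $m$ fibre coordinates over the smooth base $L^1_\bd$ with degeneracy locus $Z$, yields $[\phi_{f_\bd}]=-\LL^{m}\,[Z]=-[(L^2_\bd)^*]\cdot[MC(L_\bd)]$, using $[(L^2_\bd)^*]=\LL^{m}$. As a sanity check, when $L^2_\bd=0$ one has $m=0$, $f_\bd\equiv 0$, $MC(L_\bd)=L^1_\bd$, and the formula reduces to $[\phi_{f_\bd}]=-[L^1_\bd]$, compatible with the normalization built into Definition \ref{virdeg}. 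Finally, every step is $G_\bd$--equivariant, so the identity persists after dividing by $[G_\bd]$ and passing to virtual motives as in Definition \ref{virdeg}, which is the form used in Section \ref{sec_generating series}.
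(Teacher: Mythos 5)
Your proposal is correct and follows essentially the same route as the paper: exploit the decomposition $f_\bd=\sum_i a_i(y)w_i^*$ from Remark \ref{inner product}, use the contracting $\CC^*$-action of Lemma \ref{C*} to invoke Proposition 1.11 of \cite{BBS}, identify $\{a_1=\cdots=a_m=0\}$ with $MC(L_\bd)$ (the paper asserts this; your Hamiltonian/curvature-map argument supplies the justification), and conclude $[\phi_{f_\bd}]=-[(L^2_\bd)^*][MC(L_\bd)]$. The only cosmetic difference is that the paper carries out explicitly the stratification computation of $[f_\bd^{-1}(0)]$ and $[f_\bd^{-1}(1)]$ that you fold into the single phrase ``Proposition 1.11 applied to a function linear in the fibre coordinates''; strictly, that proposition only yields $[\phi_{f_\bd}]=[f_\bd^{-1}(1)]-[f_\bd^{-1}(0)]$, and the evaluation of this difference as $-\LL^m[Z]$ is the short additional computation the paper writes out.
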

\begin{proof}
The existence of the $\CC^*$ action in Lemma \ref{C*} implies that
the Milnor fibration given by $f_\bd$ is Zariski trivial outside the
central fiber. Hence
\[
[f_\bd^{-1}(1)]=\frac{[L_{\omega,\bd}^1]-[f_\bd^{-1}(0)]}{(\LL-1)}.
\]

Furthermore, Lemma \ref{C*} together with Proposition 1.11 of
\cite{BBS} implies that
\[
[\phi_{f_\bd}]=[f_\bd^{-1}(1)]-[f_\bd^{-1}(0)]. \]

Recall that
\[
f_\bd=\sum_{i=1}^r a_i(y_1,\ldots,y_j,\ldots)\cdot w_i^*
\]
where $r$ equals the dimension of $L^2_\bd$. We can stratify
$L^1_{\omega,\bd}$ by union of $\{a_i=0|i=1,\ldots,r\}$ and its
complement. The first subscheme is nothing but $MC(L_\bd)$. Using
this stratification, we obtain
\[
[f_\bd^{-1}(0)]=[(L_\bd^2)^*][MC(L_\bd)]+([L^1_\bd]-[MC(L_\bd)])[(L_\bd^2)^*]\LL^{-1}=
(1-\LL^{-1})[(L_\bd^2)^*][MC(L_\bd)]+\LL^{-1}[L^1_{\omega,\bd}].
\]

Then we obtain the formula for $[\phi_{f_\bd}]$
\begin{equation}
\begin{split}
[\phi_{f_\bd}]&=[f_\bd^{-1}(1)]-[f_\bd^{-1}(0)]=-[f^{-1}(0)]\frac{\LL}{\LL-1}+\frac{[L^1_{\omega,\bd}]}{\LL-1}\\
&=-(\frac{\LL-1}{\LL}[(L_\bd^2)^*][MC(L_\bd)]+\frac{L^1_{\omega,\bd}}{\LL})\frac{\LL}{\LL-1}+\frac{[L^1_{\omega,\bd}]}{\LL-1}\\
&=-[(L_\bd^2)^*][MC(L_\bd)].
\end{split}
\end{equation}
\end{proof}

\section{Virtual motives of moduli of representations}\label{sec_generating series}
In this section, we will compute two types of motivic
Donaldson-Thomas invariants: virtual motive of moduli space of
framed representations, which is a noncommutative analogue of
Hilbert schemes and virtual motive of moduli space of semi-stable
representations, which is the noncommutative analogue of moduli
space of semi-stable torsion sheaves.

Without stability, the answer is quite simple as we have seen in
Section \ref{sec_motive reduction}. With stability conditions, the
idea is to write the virtual motive of the moduli stack of all
objects as sum of virtual motives of objects with fixed HN-types.
For each HN-strata, we further decompose the virtual motive into
contributions from semi-stable objects with smaller dimension
vectors. Then try to reverse the formula.

We fix the following notations for motives.
\[
[d]_\LL!:=(\LL^d-1)(\LL^{d-1}-1)\ldots(\LL-1)
\]
\[
\left[\begin{array}{cc}d\\
d'\end{array}\right]:=\frac{[d]_\LL!}{[d-d']_\LL![d']_\LL!}
\]
\[
[\bd]_\LL !:=\prod_{i=0}^n [d_i]_\LL !
\]
\[\left[\begin{array}{cc}\bd\\
\bd'\end{array}\right]_\LL:=\prod_{i=0}^n \left[\begin{array}{cc}d_i\\
d'_i\end{array}\right].
\]
Let $GL_\bd=\prod_{i=0}^n GL_{d_i}$ and $Gr_{\bd',\bd}=\prod_{i=0}^n
Gr(d'_i,d_i)$.

It is easy to show that
\[\displaystyle{[GL_\bd]=\LL^{\sum_{i=0}^n \left(\begin{array}{cc}d_i\\
2\end{array}\right)} [\bd]_\LL !}\]
\[
[Gr_{\bd',\bd}]=\left[\begin{array}{cc}\bd\\
\bd'\end{array}\right]_\LL.
\]
\subsection{Stability conditions}\label{stability}
We follow the notations in section 5.1 of \cite{KSHall}.

Given a quiver $\cQ$, denote the abelian category of finite
dimensional representations of $\cQ$ by $\cA$. We denote its
Grothendieck group by $\Gamma$. A central charge $Z$ (a.k.a.
stability function) is an additive map $Z : \Gamma\simeq\ZZ^{n+1}
\to \CC$ such that the image of any of the standard base vectors
lies in the upper-half plane $\mathbb{H}_+ := \{z\in\CC| Im(z) >
0\}$. Central charge $Z$ is called generic if there are no two
$\QQ$-independent elements of $\ZZ^{n+1}_{\geq 0}$ which are mapped
by $Z$ to the same straight line.

Let us fix a central charge $Z$. Then for any non-zero object
$E\in\cA_\omega$, one defines $Arg(E) := Arg(Z(cl(E)) \in (0, \pi)$
, where $\bd=cl(E)\in\ZZ^{n+1}_{\geq 0}$ is the dimension vector of
the object $E$. We will also use the shorthand notation $Z(E) :=
Z(cl(E))$.

\begin{definition}\label{Z-stable}
A non-zero object $E$ is called semistable (for the central charge
$Z$) if there is no non-zero subobject $F\subset E$ such that
$Arg(F)> Arg(E)$.
\end{definition}

It will be convenient to introduce a total order $\prec$ on the
upper half plane by
\[
z_1\prec z_2\ \iff\ Arg(z_1)>Arg(z_2)\ or\ Arg(z_1)=Arg(z_2)\ and\
|z_1|>|z_2|.
\]
For a nonzero dimension vector $\bd\in \ZZ^{n+1}_{\geq0}$, let us
denote by $\P(\bd)$ the set of collections of dimension vectors
\[\bd_\bullet=(\bd_1,\ldots,\bd_k), k\geq 1,\ such\ that
\sum_{i=1}^k  \bd_i=\bd
\]
where $\bd_i$ are nonzero vector in $\ZZ^{n+1}_{\geq 0}$ satisfying
\[
Arg(Z(\bd_1))>Arg(Z(\bd_2))>\ldots>Arg(Z(\bd_k)).
\]
For any fixed $\bd$, $\P(\bd)$ is a finite set.

We introduce a partial order on $\P(\bd)$ by the formula
\[
(\bd'_1,\ldots,\bd'_{k'})<(\bd_1,\ldots,\bd_k)\ if\ \exists i, 1\leq
i\leq min(k,k')\ such\ that
\]
\[
\bd'_1=\bd_1,\ldots,\bd'_{i-1}=\bd_{i-1}\ and\ Z(\bd'_i)\prec
Z(\bd_i).
\]
This is a total order if the central charge is chosen to be generic.
We will always place this assumption in this paper.

Let $L_{\bd,\bd_\bullet}$ be the constructible subset of
$L_{\omega,\bd}$ consisting of objects $E$ which admit an filtration
$0=E_0\subset E_1\subset\ldots\subset E_k=E$ such that
\[
cl(E_i/E_{i-1})=\bd_i, i=1,\ldots, k.
\] and $\bd_\bullet\in \P(\bd)$. And let $L_{\bd,\leq \bd_\bullet}$ be
$\bigcup_{\bd'_\bullet\leq \bd_\bullet} L_{\bd,\bd'_\bullet}$.

When $\bd'_\bullet<\bd_\bullet$, there is an inclusion from
$L_{\bd,\bd'_\bullet}$ to $L_{\bd,\bd_\bullet}$ given by forgetting
the finer part of filtration. If we list the elements of $\P(\bd)$
as
\[
\bd^{(N)}_\bullet<\ldots<\bd^{(2)}_\bullet<\bd^{(1)}_\bullet<\bd^{(0)}_\bullet=\bd_\bullet,
\]
then the sequence $L_{\bd,\leq \bd^{(i)}_\bullet}, i=0,\ldots, N$
for all $\bd^i_\bullet\in \P(\bd)$ is a decreasing sequence of
closed subschemes in $L_\bd$. Their compliments form a chain of open
subschemes
\[
\emptyset=U_0\subset U_1\subset\ldots\subset U_N=L_\bd.
\]
The consecutive differences $U_i-U_{i-1}$ are locally closed smooth
subschemes, denoted by $L^{HN}_{\bd,\bd_\bullet}$, i.e. the locus of
objects with dimension vector $\bd$ and fixed Harder-Narisimanhan
type $\bd_\bullet$. In particular, $U_1$ is the subscheme of
semi-stable objects.

\subsection{Framed representations}\label{NCHilb}
\begin{definition}
Take the quiver $\cQ_\omega$ defined in the previous section. Given
a dimension vector $\bd$, let $V_0,\ldots, V_n$ be the sequence of
vector spaces of dimension $d_0,\ldots,d_n$ over the nodes. A framed
representation $V$ of $\cQ_\omega$ with dimension vector $\bd$ is a
representation of $\cQ_\omega$ together with a vector
$v=(v_0,\ldots,v_n)$ such that $v_i\in V_i$. A framed representation
$V$ is called cyclic if $v_0,\ldots,v_n$ generate $V$.
\end{definition}

Denote the submodule generated by $v$ by $M_v$. Let
\[
Y_{\bd}=\{(A,v)\in L^1_{\omega,\bd}\times V_0\times\ldots\times V_n:
f_\bd=0\}
\]
and let
\[
Z_{\bd}=\{(A,v)\in L^1_{\omega,\bd}\times V_0\times\ldots\times V_n:
f_\bd=1\}.
\]
 Then
$Y_{\bd}=\bigsqcup_{\bd'<\bd} Y_{\bd}^{\bd'}$ and
$Z_{\bd}=\bigsqcup_{\bd'<\bd} Z_{\bd}^{\bd'}$, where
$$
Y_{\bd}^{\bd'}=\{(A,v)\in L^1_{\omega,\bd}\times
V_0\times\ldots\times V_n: f_\bd=0, cl(M_v)=\bd'\},$$
$$Z_{\bd}^{\bd'}=\{(A,v)\in L^1_{\omega,\bd}\times V_0\times\ldots\times V_n: f_\bd=1, cl(M_v)=\bd'\}.
$$
Writing $w_{\bd}=[Y_{\bd}]-[Z_{\bd}]$ and
$w_{\bd}^{\bd'}=[Y_{\bd}^{\bd'}]-[Z_{\bd}^{\bd'}]$.

Let $|\bd|=\sum_{i=0}^n d_i$. By Theorem \ref{dimension reduction},
we have
\begin{equation}\label{w_d-C_d}
w_\bd=\LL^{|\bd|}[(L_\bd^2)^*][MC(L_\bd)].
\end{equation}
There is a projection from $Y_\bd^{\bd'}$ to the Grassmannian
$Gr_{\bd',\bd}$, whose fiber is the set
\[\{(\left(\begin{array}{cc}A^0 &A'\\0& A^1
\end{array}\right), v
)|f_\bd=0\}\] where $A^0$ are matrices of size $\bd'\times\bd'$
(depending on the source and target vertices), $A^1$ are matrices of
size $(\bd-\bd')\times (\bd-\bd')$ and $A'$ are matrices of size
$(\bd-\bd')\times\bd'$. There is an embedding from
$L^1_{\omega,\bd'}\times L^1_{\omega,\bd-\bd'}$ into
$L^1_{\omega,\bd}$ by mapping to block diagonal matrices.

The CS function $f_\bd$ satisfies
\[
f_\bd(\left(\begin{array}{cc}A^0 &A'\\0& A^1
\end{array}\right), v)=f_{\bd'}(A^0,v)+f_{\bd-\bd'}(A^1,v).
\]
Denote the subgroup of $GL_\bd$ that preserves these Borel matrices
by $B_{\bd,\bd'}$ and the Euler form of $\cQ_\omega$ by $\chi$.
\[
[Y_\bd^{\bd'}]= \frac{[B_{\bd,\bd'}]}{[GL_{\bd'}][GL_{\bd-\bd'}]}\cdot\LL^{-\chi(\bd',\bd-\bd')} \left[\begin{array}{cc}\bd\\
\bd'\end{array}\right]_\LL([Y_{\bd'}^{\bd'}]\cdot[Y_{\bd-\bd'}]+(\LL-1)\cdot[Z_{\bd'}^{\bd'}]\cdot[Z_{\bd-\bd'}])
\cdot \LL^{-|\bd-\bd'|}.
\]
A similar analysis yields
\[
[Z_\bd^{\bd'}]=\frac{[B_{\bd,\bd'}]}{[GL_{\bd'}][GL_{\bd-\bd'}]}\cdot\LL^{-\chi(\bd',\bd-\bd')}\left[\begin{array}{cc}\bd\\
\bd'\end{array}\right]_\LL ([Y_{\bd'}^{\bd'}]\cdot[Z_{\bd-\bd'}]+
(\LL-2)\cdot[Z_{\bd'}^{\bd'}]\cdot[Z_{\bd-\bd'}]+
[Z_{\bd'}^{\bd'}]\cdot[Y_{\bd-\bd'}])\cdot\LL^{-|\bd-\bd'|}.
\]
The above formulas, combining with equation \ref{w_d-C_d}, yields
\begin{equation}\label{w_d_d'}
\begin{split}
w_\bd^{\bd'} &= \frac{[B_{\bd,\bd'}]}{[GL_{\bd'}][GL_{\bd-\bd'}]}\LL^{-\chi(\bd',\bd-\bd')}\LL^{-|\bd-\bd'|} \left[\begin{array}{cc}\bd\\
\bd'\end{array}\right]_\LL (w_{\bd'}^{\bd'}\cdot w_{\bd-\bd'})\\
&=\frac{[B_{\bd,\bd'}][(L_{\bd-\bd'}^2)^*]}{[GL_{\bd'}][GL_{\bd-\bd'}]}\LL^{-\chi(\bd',\bd-\bd')}\left[\begin{array}{cc}\bd\\
\bd'\end{array}\right]_\LL [MC(L_{\bd-\bd'})]\cdot w^{\bd'}_{\bd'}.
\end{split}
\end{equation}
Following from the fact that $Y_{\bd}=\bigsqcup_{\bd'<\bd}
Y_{\bd}^{\bd'}$ and $Z_{\bd}=\bigsqcup_{\bd'<\bd} Z_{\bd}^{\bd'}$,
we get
\[
w^\bd_\bd=w_\bd-\sum_{\bd'<\bd} w^{\bd'}_\bd.
\]
Let $\tilde{c}_\bd=\frac{[MC(L_\bd)]}{[GL_\bd]}$. Applying
\ref{w_d-C_d} and \ref{w_d_d'}, we obtain a recursion formula
\begin{equation}\label{recursion}
\begin{split}
w^\bd_\bd&=\LL^{|\bd|}[(L_{\bd}^2)^*][MC(L_\bd)]-\sum_{\bd'<\bd}
\frac{[B_{\bd,\bd'}][(L_{\bd-\bd'}^2)^*]}{[GL_{\bd'}][GL_{\bd-\bd'}]}
\cdot\LL^{-\chi(\bd',\bd-\bd')}\left[\begin{array}{cc}\bd\\
\bd'\end{array}\right]_\LL [MC(L_{\bd-\bd'})]\cdot w^{\bd'}_{\bd'}\\
-\frac{[\phi_{f^{ss}_\bd}]}{[GL_\bd]}&=\LL^{|\bd|}[(L_{\bd}^2)^*]\tilde{c}_\bd+\sum_{\bd'<\bd}
[(L_{\bd-\bd'}^2)^*] \cdot\LL^{-\chi(\bd',\bd-\bd')}
\tilde{c}_{\bd-\bd'}\cdot \frac{[\phi_{f^{ss}_{\bd'}}]}{[GL_{\bd'}]}
\end{split}
\end{equation}
Here $f^{ss}_\bd$ is the restriction of $f_\bd$ to the semi-stable
loci.

Define the virtual motive of the noncommutative Hilbert scheme
$Hilb^\bd$ by
\begin{equation}\label{virtual Hilb}
[Hilb^\bd]_{vir}:=-\LL^{\frac{\chi(\bd,\bd)-|\bd|}{2}}
\frac{[\phi_{f^{ss}_\bd}]}{[GL_\bd]}.
\end{equation}
Replace $\phi_{f^{ss}_\bd}$ by $[Hilb^\bd]_{vir}$ subject to the
above formula, we obtain
\begin{equation}
\begin{split}
\LL^{|\bd|}[(L_{\bd}^2)^*]\tilde{c}_\bd&=\sum_{\bd'\leq\bd}
\LL^{-\frac{2\chi(\bd',\bd-\bd')+\chi(\bd',\bd')}{2}}\LL^{\frac{|\bd'|}{2}}
[(L_{\bd-\bd'}^2)^*] \tilde{c}_{\bd-\bd'}\cdot [Hilb^{\bd'}]_{vir}\\
\LL^{\frac{|\bd|}{2}}[(L_{\bd}^2)^*]\tilde{c}_\bd&=\sum_{\bd'\leq\bd}
\LL^{-\frac{\chi(\bd,\bd)-\chi(\bd-\bd',\bd-\bd')}{2}}\LL^{-\frac{|\bd-\bd'|}{2}}
[(L_{\bd-\bd'}^2)^*] \tilde{c}_{\bd-\bd'}\cdot [Hilb^{\bd'}]_{vir}\\
\LL^{\frac{\chi(\bd,\bd)}{2}}\LL^{\frac{|\bd|}{2}}[(L_{\bd}^2)^*]\tilde{c}_\bd&=\sum_{\bd'\leq\bd}
\LL^{\frac{\chi(\bd-\bd',\bd-\bd')}{2}}\LL^{-\frac{|\bd-\bd'|}{2}}
[(L_{\bd-\bd'}^2)^*] \tilde{c}_{\bd-\bd'}\cdot [Hilb^{\bd'}]_{vir}
\end{split}
\end{equation}
Define the generating series for $\widetilde{c}_\bd$ by
\[
C(t)=\sum_{\bd\in\ZZ_{\geq0}^{n+1}}
\LL^{\frac{\chi(\bd,\bd)}{2}}[(L_{\bd}^2)^*]\widetilde{c}_\bd\cdot
t^\bd
\] and the generating series of noncommutative Hilbert schemes by
\[
Z(t)=\sum_{\bd\in\ZZ_{\geq0}^{n+1}}[Hilb^\bd]_{vir}\cdot t^\bd.
\]
Then the generating series of Hilbert schemes can be written as
\begin{equation}\label{formula_Hilb}
Z(t)=\frac{C(\LL^{\frac{1}{2}}t)}{C(\LL^{-\frac{1}{2}}t)}.
\end{equation}

Finally, notice that $\LL^{\frac{\chi(\bd,\bd)}{2}}[(L_{\bd}^2)^*]$
is nothing but $\LL^{\frac{\chi_\cQ(\bd,\bd)}{2}}$ for the Euler
form of the quiver $\cQ$. So $C(t)$ is the generating series of
moduli space of representations of $\cQ$ (without stability).
\subsection{(Unframed) Semi-stable representations}\label{SStorison}
The goal of this section is to compute a recursion formula of the
virtual motive of semi-stable representations of $\cQ_\omega$
without framing condition.

Recall that an element $\bd_\bullet\in\P(\bd)$ is a collection of
dimension vectors $\bd_1,\ldots,\bd_k$ such that
$\bd_1+\ldots+\bd_k=\bd$ and
\[ Arg(Z(\bd_1))>Arg(Z(\bd_2))>\ldots>Arg(Z(\bd_k)).
\]
Let $F:=\{V^j_i\}$ be a flag of vector spaces over node
$i=0,\ldots,n$. Let $Fl_{\bd_\bullet}$ be the flag variety whose
points are flags $0=V^0_i\subset V^1_i\subset\ldots V^{k-1}_i\subset
V^k_i=V_i$ for $i=0,\ldots,n$, such that the dimension vector of
$(V^j_i)_i$ is equal to $\sum_{i=1}^j\bd_i$. Consider the incidence
subvariety $I_{\bd,\bd_\bullet}=\{(A,F)|A\ preserves\ F\}\subset
L_{\bd}\times Fl_{\bd_\bullet}$. Its projection to $L_{\omega,\bd}$
is nothing but $L_{\bd,\bd_\bullet}$.

The Harder-Narisimanhan locus $L^{HN}_{\bd,\bd_\bullet}$ is defined
to be
\[L_{\bd,\bd_\bullet}-\cup_{\bd'_\bullet<\bd_\bullet}L_{\bd,\bd'_\bullet}.\]
Suppose the central charge is generic, all the possible
$\bd'_\bullet$ that are smaller than $\bd_\bullet$ can be put in a
linear sequence of inequalities:
\[
\bd^{(N)}_\bullet<\ldots<\bd^{(2)}_\bullet<\bd^{(1)}_\bullet<\bd^{(0)}_\bullet=\bd_\bullet.
\]
Then we obtain $L_{\bd,\bd_\bullet}$ as a disjoint union of HN-loci
\begin{equation}\label{HN-decomposition}
L_{\bd,\bd_\bullet}=\bigsqcup_{i=0}^N
L^{HN}_{\bd,\bd^{(i)}_\bullet}. \end{equation}

In particular, we could take $\bd_\bullet=\bd$, then
$\bd^{(0)}_\bullet=\bd$. We rewrite the maximal strata
$L^{HN}_{\bd,\bd^{(0)}_\bullet}$ as $L^{ss}_{\omega, \bd}$.

The incidence subvariety $I_{\bd,\bd_\bullet}$ has an open
subvariety $I^{ss}_{\bd,\bd_\bullet}$ consisting of objects with
semi-stable subquotients. The fibers of the projection
$I^{ss}_{\bd,\bd_\bullet}\to Fl_{\bd,\bd_\bullet}$ can be identified
with $U^{ss}_{\bd,\bd_\bullet}$ which consists of block upper
triangular matrices of types given by $\bd_\bullet$. The product
$\prod_{j=1}^{k_i} L^{ss}_{\omega,\bd_j}$ embeds into
$U^{ss}_{\bd,\bd_\bullet}$ as block diagonal matrices. The pair of
algebraic groups $(B_{\bd,\bd_\bullet},GL_{\bd_\bullet})$ acts on
$(U^{ss}_{\bd,\bd_\bullet},\prod_{j=1}^{k_i} L^{ss}_{\omega,\bd_j})$
in a compatible way, where $B_{\bd,\bd_\bullet}$ is the Borel group
preserving $U^{ss}_{\bd,\bd_\bullet}$ and $GL_{\bd_\bullet}$ is the
product $\prod_{j=1}^{k_i} GL_{\bd_j}$. The Harder-Narisimanhan
locus $L^{HN}_{\bd,\bd_\bullet}$ equals to the image of projection
from $I^{ss}_{\bd,\bd_\bullet}$ to $L_{\omega,\bd}$, which can be
identified with $GL_\bd\times_{B_{\bd,\bd_\bullet}}
U^{ss}_{\bd,\bd_\bullet}$. Then
\[
[U^{ss}_{\bd,\bd_\bullet}]=\frac{[B_{\bd,\bd_\bullet}]}{[GL_{\bd_\bullet}]}\cdot
\LL^{-\sum_{j<j'}\chi(\bd_j,\bd_{j'})} \prod_{j=1}^{k_i}
[L^{ss}_{\omega,\bd_j}]
\]
\[
[L^{HN}_{\bd,\bd_\bullet}]=\LL^{-\sum_{j<j'}\chi(\bd_j,\bd_{j'})}\frac{[GL_\bd][U^{ss}_{\bd,\bd_\bullet}]}{[B_{\bd,\bd_\bullet}]}
=\LL^{-\sum_{j<j'}\chi(\bd_j,\bd_{j'})}\frac{[GL_\bd]\prod_{j=1}^{k_i}
[L^{ss}_{\omega,\bd_j}]}{\prod_{j=1}^{k_i} [GL_{\bd_j}]}
\]

The above situation can be thought as representations of
$\cQ_\omega$ forgetting the relations. Now we will pass to quiver
with relations. Geometrically, we intersect them with $crit(f_\bd)$.

\paragraph{Genericity assumption}
Assume $Z$ is chosen in such a way that $L^{HN}_{\bd,\bd_\bullet}$
is transversal to critical set of the CS function $f_\bd$.\\

Denote the restriction of $f_\bd$ to $L^{HN}_{\bd,\bd_\bullet}$ by
$f_{\bd_\bullet}$ and the restriction to $L^{ss}_{\omega,\bd}$ by
$f^{ss}_\bd$.
\[[\phi_{f_{\bd_\bullet}}]=[\phi_{f|_{U^{ss}_{\bd,\bd_\bullet}}}]\cdot\LL^{-\sum_{j<j'}\chi(\bd_j,\bd_{j'})}
\frac{[GL_\bd]}{[B_{\bd,\bd_\bullet}]}.\]

Under the genericity assumption, every HN-strata
$L^{HN}_{\bd,\bd_\bullet}$ is transversal to $crit(f_\bd)$. The
restriction $f_{\bd_\bullet}$ also has the $C^*$-action mentioned in
Lemma \ref{C*}. Therefore Proposition 1.11 of \cite{BBS} applies.
This leads to the following equation:
\[
[\phi_{f_\bd}]=\sum_{i=0}^N
[\phi_{f|_{L^{HN}_{\omega,\bd,\bd^{(i)}_\bullet}}}].
\]

The restriction $f|_{U^{ss}_{\bd,\bd^{(i)}_\bullet}}$ only depends
on the diagonal parts. By Thom-Sebastiani theorem, we obtain
\[
[\phi_{f|_{U^{ss}_{\bd,\bd^{(i)}_\bullet}}}]=[\phi_{f^{ss}_{\bd_1}}]
\star\ldots\star[\phi_{f^{ss}_{\bd_{k_i}}}]\cdot\frac{[B_{\bd,\bd_\bullet}]}{[GL_{\bd_\bullet}]}.
\]
After passing to the quotient stacks, we obtain
\[
\frac{[\phi_{f_{\bd^{(i)}}}]}{[GL_{\bd^{(i)}}]}=\LL^{-\sum_{j<j'}\chi(\bd_j,\bd_{j'})}\frac{[\phi_{f^{ss}_{\bd_1}}]}{[GL_{\bd_1}]}
\star\ldots\star\frac{[\phi_{f^{ss}_{\bd_{k_i}}}]}{[GL_{\bd_{k_i}}]}.
\]

Combing the above computations, we have
\[
[\phi_{f_\bd}]=\sum_{i=0}^N
[\phi_{f|_{L^{HN}_{\omega,\bd,\bd^{(i)}_\bullet}}}]=\sum_{i=0}^N
\LL^{-\sum_{j<j'}\chi(\bd_j,\bd_{j'})}[\phi_{f^{ss}_{\bd_1}}]
\star\ldots\star[\phi_{f^{ss}_{\bd_{k_i}}}]\cdot\frac{[GL_{\bd}]}{[GL_{\bd^{(i)}_\bullet}]}.
\]

\[
-[(L^2_\bd)^*]\cdot[MC(L_\bd)]=\sum_{i=0}^N
\LL^{-\sum_{j<j'}\chi(\bd_j,\bd_{j'})}[\phi_{f^{ss}_{\bd_1}}]
\star\ldots\star[\phi_{f^{ss}_{\bd_{k_i}}}]\cdot\frac{[GL_{\bd}]}{[GL_{\bd^{(i)}_\bullet}]}.
\]

\[
-\sum_\bd [(L^2_\bd)^*]\cdot \tilde{c}_\bd t^\bd=\sum_\bd
\sum_{i=0}^N
\LL^{-\sum_{j<j'}\chi(\bd_j,\bd_{j'})}\prod_{j=1}^{k_i}[\phi_{f^{ss}_{\bd_j}}]
t^\bd .
\]

Now we define the virtual motive of semi-stable representations with
dimension vector $\bd$ by
\[
[\mathfrak{M}^{ss}_\bd)]_{vir}:=-\LL^{\frac{\chi(\bd,\bd)}{2}}\cdot\frac{[\phi_{f^{ss}_\bd}]}{[GL_\bd]}.
\]
It follows that
\begin{equation}
\begin{split}
[(L^2_\bd)^*]\LL^{\frac{\chi(\bd,\bd)}{2}}\widetilde{c}_\bd&=\sum_{i=0}^N\prod_{j=1}^{k_i}[\mathfrak{M}^{ss}_\bd)]_{vir}\\
\sum_{\bd\in\ZZ_{\geq0}^{n+1}}\LL^{\frac{\chi_{\cQ}(\bd,\bd)}{2}}\widetilde{c}_\bd
t^\bd&=\sum_{\bd\in\ZZ_{\geq0}^{n+1}}(\sum_{i=0}^N\prod_{j=1}^{k_i}[\mathfrak{M}^{ss}_\bd)]_{vir})
t^\bd\\
C(t)&=\sum_{\bd\in\ZZ_{\geq0}^{n+1}}\sum_{i=0}^N(\prod_{j=1}^{k_i}[\mathfrak{M}^{ss}_\bd)]_{vir})
t^{\bd^{(i)}}.
\end{split}
\end{equation}
If we write the generating series of semi-stable representations as
$Z^{ss}(t)=\sum_{\bd\in\ZZ_{\geq0}^{n+1}}[\mathfrak{M}^{ss}_\bd)]_{vir}\cdot
t^\bd$, then the above formula can be rewritten as
\begin{equation}
C(t)=\overrightarrow{\prod} Z^{ss}(t).
\end{equation}
The product is the ordered product respect to the phase of the
central charge. The righthand side involves invariants of the quiver
$\cQ_\omega$ with stability and the lefthand side is the generating
series of $\cQ$ without stability. It is not clear how to reverse
this formula.


\begin{thebibliography}{9999}
\bibitem{Aspinwall}
P. Aspinwall, L. M. Fidkowski  \emph{Superpotentials for Quiver
Gauge Theories} arXiv:hep-th/0506041v1.
\bibitem{Behrend}
K. Behrend, \emph{Donaldson-Thomas invariants via microlocal
geometry}, math/0507523.
\bibitem{BBS}
K. Behrend, J. Bryan, B. Szendroi. \emph{Motivic degree zero
Donaldson-Thomas invariants.} arXiv:math/0909.5088.
\bibitem{Bondal}
Alexei I. Bondal, Helices, representations of quivers and Koszul
algebras, Helices and vector bundles, London Math. Soc. Lecture Note
Ser., vol. 148, Cambridge Univ. Press, Cambridge, 1990, pp. 75¨C95.
\bibitem{BorHua}
L. Borisov, Z. Hua, \emph{On the conjecture of King for smooth toric
Deligne-Mumford stacks.} Advances in Mathematics Volume 221, Issue
1, 1 May 2009, Pages 277-301.
\bibitem{Bridgeland}
T. Bridgeland, \emph{T-structures on some local Calabi¨CYau
varieties.} Journal of Algebra 289 (2005) 453¨C483
\bibitem{DL}
J. Denef, F. Loeser, \emph{Geometry on arc spaces of algebraic
varieties.} European Congress of Mathematics, Vol. I (Barcelona,
2000), volume 201 of Progr. Math.
\bibitem{JS}
D. Joyce, Y. Song, \emph{A theory of generalized Donaldson-Thomas
invariants.} arXiv:0810.5645v4
\bibitem{Keller}
B. Keller, \emph{$A_\infty$ algebras, modules and functor
categories} arXiv:math/0510508.
\bibitem{Hoch}
B. Keller \emph{Derived invariance of higher structures on the
Hochschild complex}
www.math.jussieu.fr/~keller/publ/dihabs.htmlCached

\bibitem{King}
L. ´ Alvarez-C´onsul, A. King \emph{A functorial construction of
moduli of sheaves.} Invent. Math., 168(3):613–666, 2007.
\bibitem{KS}
M. Kontsevich, Y. Soibelman, \emph{Notes on A¡Þ-algebras,
$A_\infty$-categories and non-commutative geometry. I}
arXiv:math/0606241v2
\bibitem{KSHall}
M. Kontsevich, Y. Soibelman, \emph{Cohomological Hall algebra,
exponential Hodge structures and motivic Donaldson-Thomas
invariants} arXiv:1006.2706v1
\bibitem{Morrison}
A. Morrison, \emph{Motivic invariants of quivers via dimenion
reduction} arXiv:1103.3819
\bibitem{Moz}
S. Mozgovoy, \emph{Wall-crossing formulas for framed objects}
arxiv:1104.4335
\bibitem{Nagao}
K. Nagao, \emph{Wall-crossing of the motivic Donaldson-Thomas
invariants } arxiv:1103.2922

\bibitem{Reineke}
M. Reineke, \emph{Cohomology of quiver moduli, functional equations,
and integrality of Donaldson¨CThomas type invariants}
arXiv:0903.0261, 2009.
\bibitem{Segal}
E. Segal, \emph{The A-infinity Deformation Theory of a Point and the
Derived Categories of Local Calabi-Yaus}
 arXiv:math/0702539
\end{thebibliography}
\end{document}